\documentclass[fontsize=10pt,abstract=on]{scrartcl}
\usepackage{mathptmx}
\usepackage{amsthm}
\usepackage{amssymb}
\usepackage{amsmath}
\usepackage{enumerate}
\usepackage[pdfpagelabels,colorlinks,linkcolor=blue]{hyperref}

\newtheorem{theorem}{Theorem}
\newtheorem{lemma}[theorem]{Lemma}

\theoremstyle{definition}

\newtheorem{remark}[theorem]{Remark}
\newtheorem{condition}[theorem]{Condition}

\allowdisplaybreaks

\let\originalleft\left
\let\originalright\right
\renewcommand{\left}{\mathopen{}\mathclose\bgroup\originalleft}
\renewcommand{\right}{\aftergroup\egroup\originalright}
\DeclareMathOperator{\supp}{supp}
\DeclareMathOperator{\divg}{div}
\DeclareMathOperator{\curl}{curl}
\newcommand{\R}{\mathbb{R}}
\newcommand{\inte}{\mathrm{int}}
\newcommand{\ext}{\mathrm{ext}}
\newcommand{\numb}{\stepcounter{equation}\tag{\theequation}}
\newcommand{\phan}{\phantom{=\ }}

\begin{document}

\title{Optimal Control of the Two-Dimensional Vlasov-Maxwell System}
\author{J\"org Weber\\ \textit{University of Bayreuth, 95440 Bayreuth, Bavaria, Germany}\\ \texttt{Joerg.Weber@uni-bayreuth.de}}
\date{}

\maketitle
\begin{abstract}
	The time evolution of a collisionless plasma is modeled by the Vlasov-Maxwell system which couples the Vlasov equation (the transport equation) with the Max\-well equations of electrodynamics. We only consider a 'two-dimensional' version of the problem since existence of global, classical solutions of the full three-dimensional problem is not known. We add external currents to the system, in applications generated by coils, to control the plasma in a proper way. After considering global existence of solutions to this system, differentiability of the control-to-state operator is proved. In applications, on the one hand, we want the shape of the plasma to be close to some desired shape. On the other hand, a cost term penalizing the external currents shall be as small as possible. These two aims lead to minimizing some objective function. We restrict ourselves to only such control currents that are realizable in applications. After that, we prove existence of a minimizer and deduce first order optimality conditions and the adjoint equation.\\\\
	\textit{Keywords: relativistic Vlasov-Maxwell system, optimal control with PDE constraints, nonlinear partial differential equations, calculus of variations}\\\\
	MSC Classification: 49J20, 35Q61, 35Q83, 82D10.
\end{abstract}

\section{Introduction}
\subsection{The system}
The time evolution of a collisionless plasma is modeled by the Vlasov-Maxwell system. Collisions among the plasma particles can be neglected if the plasma is sufficiently rarefied or hot. The particles only interact through electromagnetic fields created collectively. We only consider plasmas consisting of just one particle species, for example, electrons. This work can immediately be adapted to the case of several particle species. For the sake of simplicity, we choose units such that physical constants like the speed of light, the charge and rest mass of an individual particle are normalized to unity. Allowing the particles to move at relativistic speeds, the three-dimensional Vlasov-Maxwell system is given by
\begin{subequations}
\begin{align}
\partial_t f+\widehat{p}\cdot\partial_x f+\left(E+\widehat{p}\times B\right)\cdot\partial_p f &= 0,\label{vl}\\
\partial_t E-\curl_xB &= -j_f,\label{mw1}\\
\partial_t B+\curl_xE &= 0,\\
\divg_xE &= \rho\label{mw3},\\
\divg_xB &= 0\label{mw2},\\
\rho_f &= 4\pi\int{f\,dp},\label{rho}\\
j_f &= 4\pi\int{\widehat{p}f\,dp}\label{j}.
\end{align}
\end{subequations}
Here, the Vlasov equation is \eqref{vl} and the Maxwell equations of electrodynamics are \eqref{mw1} to \eqref{mw2}. Vlasov and Maxwell equations are coupled via \eqref{rho} and \eqref{j} rendering the whole system nonlinear due to the product term $\left(E+\widehat{p}\times B\right)\cdot\partial_p f$. In particular, $f=f\left(t,x,p\right)$ denotes the density of the particles on phase space, and $E=E\left(t,x\right)$, $B=B\left(t,x\right)$ are the electromagnetic fields, whereby $t\in\R$, $x$, and $p\in\R^3$ stand for time, position in space, and momentum. The abbreviation $\widehat{p}=\frac{p}{\sqrt{1+\left| p\right|^2}}$ denotes the velocity of a particle with momentum $p$. Furthermore, some moments of $f$ appear as source terms in the Maxwell equations, that is to say $j_f$ and $\rho_f$ which equal the current and charge density up to the constant $4\pi$.

However, we have not readily explained the source term $\rho$ in \eqref{mw3}. If we would demand $\divg_xE=\rho_f$ this would lead to a seeming contradiction: Formally integrating this equation with respect to $x$ (and assuming $E\rightarrow 0$ rapidly enough at $\infty$) leads to $\int{\rho_f\,dx=0}$ and hence $f=0$ by $\mathring{f}\geq 0$. This problem is caused by our simplifying restriction to one species of particles and is resolved by adding some terms to $\rho_f$, for example a neutralizing background density, so that we have a total charge density $\rho$ with vanishing space integral.

Considering the Cauchy problem for the above system, we moreover demand
\begin{align*}
f\left(0,x,p\right)=\mathring{f}\left(x,p\right),E\left(0,x\right)=\mathring{E}\left(x\right),B\left(0,x\right)=\mathring{B}\left(x\right),
\end{align*}
where $\mathring{f}\geq 0$, $\mathring{E}$, and $\mathring{B}$ are some given initial data.

Unfortunately, existence of global (i.e., global in time), classical (i.e., continuously differentiable) solutions for general (smooth) data is an open problem in the three-dimensional setting. It is only known that global weak solutions can be obtained. This was proved by R.J. Di Perna and P.L. Lions \cite{dipl}. For a detailed insight concerning this matter we recommend the review article \cite{gws} by G. Rein. As for global existence of classical solutions, the strategy was to first consider lower dimensional settings. R. Glassey and J. Schaeffer proved global existence of classical solutions in the one and one-half \cite{oah}, the two \cite{rvm1,rvm2}, and the two and one-half dimensional setting \cite{tah}.

Since it is convenient to have global existence of classical solutions on hand, we consider a two-dimensional version of the problem in this work. Notice that \textit{mutatis mutandis} all results and techniques can be applied to the full three-dimensional setting once global existence of classical solutions has been proved. The restriction to 'two-dimensionality' is to be understood in the following sense: All functions shall be independent of the third variables $x_3$ and $p_3$. This new model describes a plasma where the particles only move in the $\left(x_1,x_2\right)$-plane, but the plasma extends in the $x_3$-direction infinitely. To ensure that these properties are preserved in time, we have to demand that the electric field lies in the plane and that the magnetic field is perpendicular to the plane so that $E=\left(E_1\left(t,x\right),E_2\left(t,x\right),0\right)$ and $B=\left(0,0,B\left(t,x\right)\right)$. Here and in the following, let $x=\left(x_1,x_2\right)$ and $p=\left(p_1,p_2\right)$ be two-dimensional variables. Note that hence the magnetic field is always divergence free with respect to $x$, so that \eqref{mw2} is always satisfied and will no longer be mentioned. The two-dimensional Vlasov-Maxwell system reads
\begin{align*}
\partial_t f+\widehat{p}\cdot\partial_x f+\left(E+\left(\widehat{p}_2,-\widehat{p}_1\right)B\right)\cdot\partial_p f &= 0,\\
\partial_t E_1-\partial_{x_2}B &= -j_{f,1},\\
\partial_t E_2+\partial_{x_1}B &= -j_{f,2},\\
\partial_t B+\partial_{x_1}E_2-\partial_{x_2}E_1 &= 0,\\
\divg_xE &= \rho,\\
\left.\left(f,E,B\right)\right|_{t=0} &= \left(\mathring{f},\mathring{E},\mathring{B}\right).
\end{align*}
The goal is to control the plasma in a proper way. Thereto we add external currents $U$ to the system, in applications generated by electric coils. These currents, like the electric field and the current density of the plasma particles, have to lie in the plane and have to be independent of the third space coordinate. Of course, there will be an external charge density $\rho_\ext$ corresponding to the external current. It is natural to assume local conservation of the external charge, i.e.
\begin{align*}
\partial_t\rho_\ext+\divg_xU=0.
\end{align*}
Hence we can eliminate $\rho_\ext$ via
\begin{align*}
\rho_\ext=\mathring{\rho}_\ext-\int_0^t{\divg_xU\,d\tau}.
\end{align*}
The initial value $\mathring{\rho}_\ext$ will be added to the background density. This total background density will be neglected throughout this work.

Also, for simplicity, we do not consider material parameters, for example for modeling supraconductors in a fusion reactor, that is to say permittivity and permeability, which would appear in the Maxwell equations.

In the following, we consider the controlled relativistic Vlasov-Maxwell system
\begin{align}\tag{CVM}\label{CVM}\left.\begin{aligned}
\partial_t f+\widehat{p}\cdot\partial_x f+\left(E-\widehat{p}^\bot B\right)\cdot\partial_p f &= 0,\\
\partial_t E_1-\partial_{x_2}B &= -j_{f,1}-U_1,\\
\partial_t E_2+\partial_{x_1}B &= -j_{f,2}-U_2,\\
\partial_t B+\partial_{x_1}E_2-\partial_{x_2}E_1 &= 0,\\
\divg_xE &= \rho_f-\int_0^t{\divg_xU\,d\tau},\\
\left.\left(f,E,B\right)\right|_{t=0} &= \left(\mathring{f},\mathring{E},\mathring{B}\right)
\end{aligned}\right\}\end{align}
on a finite time interval $\left[0,T\right]$ with given $T>0$; here we introduced the abbreviation $a^\bot=\left(-a_2,a_1\right)$ for $a\in\R^2$.

It is well known that $L^q$-norms (with respect to $\left(x,p\right)$, $1\leq q\leq\infty$) of $f$ are preserved in time by $f$ solving the Vlasov equation since the vector field $\left(\widehat{p},E-\widehat{p}^\bot B\right)$ is divergence free in $\left(x,p\right)$. Therefore, especially, the $L^1$-norm (with respect to $x$) of the charge density $\rho_f$ is constant in time.

The outline of our work is the following: In the first part, we have to prove unique solvability of \eqref{CVM}. Of course, some regularity assumptions on the external current and the initial data have to be made in order to prove existence of classical solutions. In the second part, we consider an optimal control problem. On the one hand, we want the shape of the plasma to be close to some desired shape. On the other hand, the external currents shall be as small as possible. These two aims lead to minimizing some objective function. To analyze the optimal control problem, it is convenient to show differentiability of the control-to-state operator first. After that, we prove existence of a minimizer and deduce first order optimality conditions and the adjoint equation.

The steps mentioned above were carried out by P. Knopf \cite{knopf} and, only considering realizable control fields, by Knopf and the author \cite{kw} for the three-dimensional Vlasov-Poisson system with an external magnetic field. The consideration of the latter setting has the advantage of being able to work in three dimensions, but has the disadvantage of only imposing Poisson's equation, that is, Maxwell's equations with an internal magnetic field sufficiently small to be neglected, for the electromagnetic fields, which make things easier due to the elliptic nature of Poisson's equation in contrast to the hyperbolic nature of the (time evolutionary) Maxwell equations.

Also other approaches for controlling a Vlasov-Maxwell plasma have been considered in the literature, but they are different in nature compared to our approach. We refer to \cite{GH15,NNS15} and the references therein.

\subsection{Some notation and simple computations}
We denote by $B_r\left(x\right)$ the open ball with radius $r>0$ and center $x\in X$ where $X$ is a normed space. Furthermore, we abbreviate $B_r:=B_r\left(0\right)$. For a function $g\colon\left[0,T\right]\times\R^j\rightarrow\R^k$ we abbreviate $g\left(t\right):=g\left(t,\cdot\right)\colon\R^j\rightarrow\R^k$ for $0\leq t\leq T$. Also, we write $\supp g$ for the support of $g$, and $\supp_xg$ (and likewise $\supp_pg$) for the support of a function $g=g\left(t,x,p\right)$ with respect to $x$, that is, the closure of the set of all $x$ such that there are $t$ and $p$ with $g\left(t,x,p\right)\neq 0$. Sometimes, denoting certain function spaces, we omit the set where these functions are defined. Which set is meant should be obvious, in fact the largest possible set like $\left[0,T\right]\times\R^j$ (including time) or $\R^j$ (not including time). Moreover, $C_b^k$ denotes the space of $k$-times continuously differentiable functions (on a given set) such that all derivatives up to order $k$ are bounded. The index $c$, as in $C_c^k$, indicates that such functions are compactly supported. Furthermore, $X\hookrightarrow Y$ means that $X$ is continuously embedded in $Y$. Finally, we use the abbreviations
\begin{align*}
\xi=\frac{y-x}{t-\tau},\,es=\frac{-2\left(\xi+\widehat{p}\right)}{1+\widehat{p}\cdot\xi},\,bs=\frac{-2\xi\cdot\widehat{p}^\bot}{1+\widehat{p}\cdot\xi},\\
et=\frac{-2\left(1-\left|\widehat{p}\right|^2\right)\left(\xi+\widehat{p}\right)}{\left(1+\widehat{p}\cdot\xi\right)^2},\,bt=\frac{-2\left(1-\left|\widehat{p}\right|^2\right)\xi\cdot\widehat{p}^\bot}{\left(1+\widehat{p}\cdot\xi\right)^2},
\end{align*}
where $t,\tau\in\left[0,T\right]$, $x,y,p\in\R^2$.

We state some fundamental properties which will be used several times:
\begin{remark}\label{fund}
\begin{enumerate}[i)]
	\item For $\left| p\right|\leq r$ and $\left|\xi\right|\leq 1$ we can estimate
	\begin{align*}
	\left|\partial_p\left(bs\right)\right|,\left|\partial_p\left(es\right)\right|,\left|\partial_p\partial_\xi\left(bs\right)\right|,\left|\partial_p\partial_\xi\left(es\right)\right|,\left| bt\right|,\left| et\right|,\left|\partial_{\left(\xi,p\right)}\left(bt\right)\right|,\left|\partial_{\left(\xi,p\right)}\left(et\right)\right|
	\end{align*}
	by a constant $C\left(r\right)>0$ only depending on $r$, since
	\begin{align*}
	\left|1+\widehat{p}\cdot\xi\right|\geq1-\left|\widehat{p}\right|\left|\xi\right|\geq1-\frac{r}{\sqrt{1+r^2}}>0.
	\end{align*}
	\item We compute
	\begin{align*}
	\int_{\left| x-y\right|<t-\tau}{\frac{dy}{\sqrt{\left(t-\tau\right)^2-\left| x-y\right|^2}}}=2\pi\int_0^{t-\tau}{s\left(\left(t-\tau\right)^2-s^2\right)^{-\frac{1}{2}}ds}=2\pi\left(t-\tau\right)
	\end{align*}
	and
	\begin{align*}
	&\int_0^t{\int_{\left| x-y\right|<t-\tau}{\frac{dyd\tau}{\left(t-\tau\right)^{l+1}\sqrt{1-\left|\xi\right|^2}}}}=\int_0^t{\int_{\left| x-y\right|<t-\tau}{\frac{dyd\tau}{\left(t-\tau\right)^l\sqrt{\left(t-\tau\right)^2-\left| x-y\right|^2}}}}\\
	&=2\pi\int_0^t\left(t-\tau\right)^{-l+1} d\tau\leq\frac{2\pi}{2-l}T^{2-l}=C\left(T,l\right)<\infty
	\end{align*}
	for $l<2$.
\end{enumerate}
\end{remark}
\subsection{Maxwell equations}
We will have to consider first order and second order Maxwell equations. It is well known that they are equivalent and that the divergence equations propagate in time if local conservation of charge holds, i.e. 
\begin{align}\tag{LC}\label{LC}\begin{aligned}
\partial_t\rho+\divg_xj=0.
\end{aligned}\end{align}
In our two-dimensional setting with fields $\left(E_1,E_2,0\right)$ and $\left(0,0,B\right)$ we conclude:
\begin{lemma}\label{equi}
Let $\mathring{E}$ and $\mathring{B}$ be of class $C^2$ and $E$, $B\in C^2$, and $\rho$, $j\in C^1$. If the conditions
\begin{align}\tag{CC}\label{CC}\begin{aligned}
\divg\mathring{E}=\rho\left(0\right)
\end{aligned}\end{align}
and
\begin{align}\tag{LC}\begin{aligned}
\partial_t\rho+\divg_xj=0
\end{aligned}\end{align}
are satisfied, then the systems of first order Maxwell equations
\begin{align}\tag{1stME}\left.\begin{aligned}
\partial_t E_1-\partial_{x_2}B &= -j_1,\\
\partial_t E_2+\partial_{x_1}B &= -j_2,\\
\partial_t B+\partial_{x_1}E_2-\partial_{x_2}E_1 &= 0,\\
\left(E,B\right)\left(0\right) &= \left(\mathring{E},\mathring{B}\right),
\end{aligned}\right\}\end{align}
and second order Maxwell equations
\begin{align}\tag{2ndME}\label{2ndME}\left.\begin{aligned}
\partial_t^2 E-\Delta E &= -\partial_t j-\partial_x\rho,\\
E\left(0\right) &= \mathring{E},\\
\partial_t E\left(0\right) &= \left(\partial_{x_2}\mathring{B},-\partial_{x_1}\mathring{B}\right)-j\left(0\right),\\
\partial_t^2 B-\Delta B &= \partial_{x_1}j_2-\partial_{x_2}j_1,\\
B\left(0\right) &= \mathring{B},\\
\partial_t B\left(0\right) &= -\partial_{x_1}\mathring{E}_2+\partial_{x_2}\mathring{E}_1,
\end{aligned}\right\}\end{align}
are equivalent. Moreover, then also $\divg E=\rho$ globally in time.
\end{lemma}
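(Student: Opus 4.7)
For the forward implication I would first upgrade condition \eqref{CC} to the global Gauss law $\divg E=\rho$. Setting $\gamma:=\divg E-\rho$, time-differentiation and insertion of the first two equations of (1stME) yield
\begin{align*}
\partial_t\gamma&=\divg\partial_tE-\partial_t\rho\\
&=\partial_{x_1}\left(\partial_{x_2}B-j_1\right)+\partial_{x_2}\left(-\partial_{x_1}B-j_2\right)-\partial_t\rho\\
&=-\divg j-\partial_t\rho=0
\end{align*}
by \eqref{LC}, and $\gamma\left(0\right)=0$ by \eqref{CC}, so $\gamma\equiv 0$. Then differentiating each equation of (1stME) in time, substituting $\partial_tB$ and $\partial_tE$ back from (1stME), and rewriting the mixed spatial derivatives $\partial_{x_1x_2}E_2$ and $\partial_{x_1x_2}E_1$ via $\partial_{x_1}E_1+\partial_{x_2}E_2=\rho$, produces precisely the wave equations of \eqref{2ndME}; the prescribed initial values of $\partial_tE\left(0\right)$ and $\partial_tB\left(0\right)$ are read off from (1stME) at $t=0$.

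For the harder reverse implication my plan is to study the first-order discrepancies
\begin{align*}
\alpha_1&:=\partial_tE_1-\partial_{x_2}B+j_1,&\alpha_2&:=\partial_tE_2+\partial_{x_1}B+j_2,\\
\beta&:=\partial_tB+\partial_{x_1}E_2-\partial_{x_2}E_1,&\gamma&:=\divg E-\rho,
\end{align*}
so that (1stME) together with the Gauss law is equivalent to $V:=\left(\alpha_1,\alpha_2,\beta,\gamma\right)\equiv 0$. The initial data prescribed in \eqref{2ndME} are tailored so that $\alpha_i\left(0\right)=\beta\left(0\right)=0$, and \eqref{CC} yields $\gamma\left(0\right)=0$. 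A direct computation, using the wave equations of \eqref{2ndME} to replace $\partial_t^2E$ and $\partial_t^2B$ and using \eqref{LC} to handle $\partial_t\rho$, produces the closed first-order system
\begin{align*}
\partial_t\alpha_1&=\partial_{x_1}\gamma-\partial_{x_2}\beta,&\partial_t\alpha_2&=\partial_{x_2}\gamma+\partial_{x_1}\beta,\\
\partial_t\beta&=\partial_{x_1}\alpha_2-\partial_{x_2}\alpha_1,&\partial_t\gamma&=\partial_{x_1}\alpha_1+\partial_{x_2}\alpha_2.
\end{align*}

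This is a symmetric hyperbolic system whose constant coefficient matrices $A_1,A_2$ are symmetric and satisfy $A_1^2=A_2^2=I$ together with $A_1A_2+A_2A_1=0$, so the symbol $n_1A_1+n_2A_2$ squares to the identity for every unit vector $n$ and the characteristic speed equals $1$. The standard energy identity $\tfrac{1}{2}\partial_t\left|V\right|^2=\tfrac{1}{2}\partial_{x_1}\left(V\cdot A_1V\right)+\tfrac{1}{2}\partial_{x_2}\left(V\cdot A_2V\right)$, integrated over the backward light cone with apex $\left(t_0,x_0\right)\in\left(0,T\right]\times\R^2$, gives $\frac{d}{dt}\int_{B_{t_0-t}\left(x_0\right)}\left|V\right|^2dx\leq 0$; since the initial slice contributes zero, this forces $V\left(t_0,x_0\right)=0$, and as $\left(t_0,x_0\right)$ is arbitrary, $V\equiv 0$ throughout, establishing both (1stME) and $\divg E=\rho$ globally in time.

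The main obstacle I anticipate is the reverse direction, specifically the algebraic insight that the Gauss defect $\gamma$ must be carried along with $\alpha_1,\alpha_2,\beta$ in order to close the system; once this is noticed, the remainder is the classical finite-speed-of-propagation argument for a symmetric hyperbolic system. Notably, no decay or compact support hypothesis on the fields is required because the energy argument is purely local in space-time, and the regularity $E,B\in C^2$, $\rho,j\in C^1$ is exactly enough to keep $V\in C^1$ and make every manipulation classical.
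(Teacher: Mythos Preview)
Your argument is correct in both directions. The forward implication is routine once the Gauss law has been propagated, and your computations for the closed first-order system satisfied by $V=\left(\alpha_1,\alpha_2,\beta,\gamma\right)$ are accurate; the matrices $A_1,A_2$ are indeed symmetric with $A_1^2=A_2^2=I$ and $A_1A_2+A_2A_1=0$, so the characteristic speed is one and the localized energy estimate on backward cones goes through with the available $C^1$-regularity of $V$.

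As for comparison with the paper: the paper does \emph{not} prove this lemma at all. It is stated as a consequence of the remark preceding it that the equivalence of first- and second-order Maxwell systems, together with the propagation of the divergence constraint under \eqref{LC}, is ``well known'', and no further argument is given. Your write-up therefore supplies a complete proof where the paper simply appeals to folklore. The device of adjoining the Gauss defect $\gamma$ to $\left(\alpha_1,\alpha_2,\beta\right)$ so that the discrepancies satisfy a homogeneous symmetric hyperbolic system with zero data is exactly the right idea, and the cone-based energy argument is the cleanest way to conclude without any support or decay assumption on $E,B$; an alternative would be to observe that each component of $V$ itself solves the scalar wave equation with zero Cauchy data and invoke uniqueness there, but your route is just as direct.
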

We give a quite general condition that guarantees \eqref{LC}.
\begin{lemma}\label{LCco}
Let $g\in C$, and $f$, $d$, and $K$ of class $C^1$ with $\divg_pK=0$ and $f\left(t,x,\cdot\right)$ compactly supported for each $t\in\left[0,T\right]$ and $x\in\R^2$. Assume $\partial_tf+\widehat{p}\cdot\partial_xf+K\cdot\partial_pf=g$ and that $\int{g\,dp}=0$ holds. Then $\rho=\rho_f-\int_0^t{\divg_xd\,d\tau}$ and $j=j_f+d$ satisfy \eqref{LC}.
\end{lemma}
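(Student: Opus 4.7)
The plan is straightforward: compute $\partial_t\rho + \divg_x j$ by pulling all derivatives inside the $p$-integrals and using the Vlasov equation to substitute for $\partial_t f$. First, the auxiliary pair involving $d$ is designed for exact cancellation: one obtains $\partial_t\rho + \divg_x j = \partial_t\rho_f + \divg_x j_f$ since the two $\divg_x d$ contributions come with opposite signs. Hence it suffices to establish the continuity identity for the particle part, $\partial_t\rho_f + \divg_x j_f = 0$.

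Next I would exchange differentiation and $p$-integration, which is permissible thanks to the $C^1$ regularity of $f$ together with the compact $p$-support of $f(t,x,\cdot)$, giving a local uniform dominating function in $(t,x)$. This yields
\begin{align*}
\partial_t\rho_f + \divg_x j_f = 4\pi\int\bigl(\partial_t f + \widehat{p}\cdot\partial_x f\bigr)\,dp.
\end{align*}
Invoking the Vlasov equation $\partial_t f + \widehat{p}\cdot\partial_x f + K\cdot\partial_p f = g$, the right-hand side equals $4\pi\int\bigl(g - K\cdot\partial_p f\bigr)\,dp$. The $g$-term vanishes by the hypothesis $\int g\,dp = 0$. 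For the remaining term, the assumption $\divg_p K = 0$ allows me to rewrite $K\cdot\partial_p f = \divg_p(Kf)$, and its $p$-integral vanishes by the divergence theorem because $f(t,x,\cdot)$ has compact support.

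There is no real obstacle here; the argument is a textbook weak-continuity computation. The only points worth care are the interchange of derivatives and $p$-integrals, which is immediate from the hypotheses, and keeping track of the factor $4\pi$ baked into the definitions of $\rho_f$ and $j_f$ so that no spurious constants appear when assembling the final identity $\partial_t\rho + \divg_x j = 0$.
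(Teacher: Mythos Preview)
Your proof is correct and follows the same route as the paper: split off the $d$-contribution, which cancels trivially, and then integrate the Vlasov equation in $p$, using $\int g\,dp=0$ and $\divg_p K=0$ together with the compact $p$-support of $f$ to conclude $\partial_t\rho_f+\divg_x j_f=0$. The paper's own proof is just a two-line summary of exactly this argument.
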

\begin{proof}
First, $\partial_t\left(-\int_0^t{\divg_xd\,d\tau}\right)+\divg_xd=0$ is obvious. Furthermore, integrating the Vlasov equation with respect to $p$ instantly yields $\partial_t\rho_f+\divg_xj_f=0$.
\end{proof}
Since \eqref{2ndME} consists of Cauchy problems for wave equations, we will need a solution formula for the 2D wave equation. In two dimensions, the (in $C^2$ unique) solution of the Cauchy problem
\begin{align*}
\partial_t^2 u-\Delta u&=f,\\
u\left(0\right)&=g,\\
\partial_tu\left(0\right)&=h,
\end{align*}
is given by the well known formula
\begin{align*}
u\left(t,x\right)&=\frac{1}{2\pi}\int_0^t{\int_{\left| x-y\right|<t-\tau}{\frac{f\left(\tau,y\right)}{\sqrt{\left(t-\tau\right)^2-\left| x-y\right|^2}}\,dyd\tau}}\\
&\phan+\frac{1}{2\pi}\int_{B_1}{\frac{g\left(x+ty\right)+t\nabla g\left(x+ty\right)\cdot y+th\left(x+ty\right)}{\sqrt{1-\left| y\right|^2}}\,dy}
\end{align*}
if the data are smooth.

\subsection{Control space for classical solutions}
In the following let $L>0$,
\begin{align*}
U\in V:=\left\{d\in W^{2,1}\left(0,T;C_b^4\left(\R^2;\R^2\right)\right)\mid d\left(t,x\right)=0\text{ for }\left| x\right|\geq L\right\},
\end{align*}
and let $V$ be equipped with the $W^{2,1}\left(0,T;C_b^4\left(\R^2;\R^2\right)\right)$-norm.
\section{Existence results}
\subsection{Estimates on the fields}\label{est}
\subsubsection{A generalized system}
The most important tool to get certain bounds is to have representations of the fields. One can use the solution formula for the wave equation and after some transformation of the integral expressions Gronwall-like estimates on the density and the fields can be derived. These bounds, for instance, will imply that the sequences constructed in Section \ref{loc} converge in a certain sense. Having that in mind it is useful not to work with the system \eqref{CVM} but with a somewhat generalized one with second order Maxwell equations:
\begin{align}\tag{GVM}\label{GVM}\left.\begin{aligned}
\partial_t f+\widehat{p}\cdot\partial_x f+\alpha\left(p\right)K\cdot\partial_p f &= g,\\
\partial_t^2 E-\Delta E &= -\partial_t j_f-\partial_t d-\partial_x\rho_f+\partial_x\int_0^t{\divg_xd\,d\tau},\\
\partial_t^2 B-\Delta B &= \partial_{x_1}j_{f,2}-\partial_{x_2}j_{f,1}+\partial_{x_1}d_2-\partial_{x_2}d_1,\\
\left(f,E,B\right)\left(0\right) &= \left(\mathring{f},\mathring{E},\mathring{B}\right),\\
\partial_t E\left(0\right) &= \left(\partial_{x_2}\mathring{B},-\partial_{x_1}\mathring{B}\right)-j_{\mathring{f}}-d\left(0\right),\\
\partial_t B\left(0\right) &= -\partial_{x_1}\mathring{E}_2+\partial_{x_2}\mathring{E}_1,
\end{aligned}\right\}\end{align}
with initial data $\mathring{f}$ of class $C^1_c$ and $\mathring{E}$, $\mathring{B}$ of class $C_b^2$. We assume that we already have functions $f$, $K$ of class $C^1$, $E$, $B$ of class $C^2$, $g$ of class $C_b$, $d$ of class $C^1\left(0,T;C_b^2\right)$ and $\alpha$ of class $C^1_b$ satisfying \eqref{GVM}. Furthermore we assume that $\divg_pK=0$ and that there is a $r>0$ such that $f\left(t,x,p\right)=g\left(t,x,p\right)=0$ if $\left| p\right|>r$.
\subsubsection{Estimates on the density}
\begin{lemma}\label{fest}
The density $f$ and its $\left(x,p\right)$-derivatives are estimated by
\begin{enumerate}[i)]
	\item \begin{align*}
	\left\|f\left(t\right)\right\|_\infty \leq \left\|\mathring{f}\right\|_\infty+\int_0^t{\left\|g\left(\tau\right)\right\|_\infty d\tau}
	\end{align*}
	if $g\in C$ and
	\item \begin{align*}
	\left\|\partial_{x,p}f\left(t\right)\right\|_\infty &\leq \left(\left\|\partial_{x,p}\mathring{f}\right\|_\infty+\int_0^t{\left\|\partial_{x,p}g\left(\tau\right)\right\|_\infty d\tau}\right)\\
&\phan\cdot\exp\left(\int_0^t{\left\|\partial_{x,p}\left(\alpha K\right)\left(\tau\right)\right\|_\infty d\tau}\right)
	\end{align*}
	if $g\in C^1$.
\end{enumerate}
\end{lemma}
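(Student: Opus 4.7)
My plan is to prove both estimates by the method of characteristics. For each $(t,x,p) \in [0,T] \times \R^2 \times \R^2$, define $s \mapsto (X(s), P(s)) = (X(s;t,x,p), P(s;t,x,p))$ as the solution of
\begin{align*}
\dot X(s) = \widehat{P}(s), \qquad \dot P(s) = \alpha(P(s))\, K(s, X(s), P(s))
\end{align*}
with terminal data $(X(t), P(t)) = (x, p)$. The right-hand side is locally Lipschitz since $\alpha \in C^1_b$ and $K \in C^1$; combined with $|\widehat{P}| < 1$ and the at-most-linear growth of $K$ in the momentum variable available in the setting of \eqref{GVM}, a standard escape-time argument produces a global $C^1$ flow on $[0,T]$, and $(x,p) \mapsto (X(0), P(0))$ is a $C^1$-diffeomorphism of $\R^2 \times \R^2$ onto itself.

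Part (i) is then immediate: along the characteristic flow,
\begin{align*}
\frac{d}{ds} f(s, X(s), P(s)) = g(s, X(s), P(s)),
\end{align*}
so integrating from $0$ to $t$, using that $(X(0), P(0))$ exhausts $\R^2 \times \R^2$ as $(x,p)$ does, and taking $\sup_{(x,p)}$ delivers the first bound.

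For part (ii) I would differentiate the Vlasov equation componentwise with respect to $y := (x,p)$. Writing $\partial_{y_j}$ for any such derivative, one obtains
\begin{align*}
(\partial_t + \widehat{p} \cdot \partial_x + \alpha K \cdot \partial_p)(\partial_{y_j} f) = \partial_{y_j} g - (\partial_{y_j} \widehat{p}) \cdot \partial_x f - \partial_{y_j}(\alpha K) \cdot \partial_p f.
\end{align*}
Along the same characteristics the left-hand side becomes $\frac{d}{ds} \partial_{y_j} f(s, X(s), P(s))$. Integrating, taking $\sup_{(x,p)}$, and summing over $j$ yields a scalar integral inequality for $\varphi(t) := \|\partial_{x,p} f(t)\|_\infty$ of the form
\begin{align*}
\varphi(t) \leq \|\partial_{x,p} \mathring f\|_\infty + \int_0^t \|\partial_{x,p} g(\tau)\|_\infty\, d\tau + \int_0^t \|\partial_{x,p}(\alpha K)(\tau)\|_\infty\, \varphi(\tau)\, d\tau,
\end{align*}
the contribution from $\partial_p \widehat{p}$ being pointwise bounded by $1$ and hence absorbable into the coefficient. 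Gronwall's inequality then produces the stated exponential bound.

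The only non-routine aspect is verifying global existence of characteristics through \emph{every} $(x,p)$, not just through the $p$-support of $f$ and $g$, over the finite interval $[0,T]$; this is guaranteed by the standing regularity and boundedness hypotheses, and it is what legitimises the change-of-variables step in which $(x,p)$-suprema are traded for suprema over the initial data. Everything else is a textbook application of the method of characteristics combined with Gronwall's lemma.
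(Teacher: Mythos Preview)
Your approach via characteristics and Gronwall is exactly what the paper does; the paper phrases part (ii) as an integral identity for $\partial_{x,p}f$ along the characteristic flow (citing \cite{vkg}), which is precisely what one obtains by differentiating the Vlasov equation and integrating along characteristics. One minor point: the contribution $(\partial_p\widehat{p})\cdot\partial_x f$ cannot literally be ``absorbed into the coefficient'' $\|\partial_{x,p}(\alpha K)\|_\infty$---it produces an additive $1$ in the Gronwall kernel, so the exponential becomes $\exp\!\bigl(\int_0^t(1+\|\partial_{x,p}(\alpha K)(\tau)\|_\infty)\,d\tau\bigr)$; the paper's displayed identity glosses over the very same $\partial_p\widehat{p}$ term, and since the lemma is only ever applied with generic constants depending on $T$, the discrepancy is inconsequential.
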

\begin{proof}
This is easily proved by considering the characteristics of the Vlasov equation in \eqref{GVM}, which are defined via
\begin{align*}
\dot{X}=\widehat{P},\,\dot{P}=\alpha\left(P\right)K\left(s,X,P\right)
\end{align*}
with initial condition $\left(X,P\right)\left(t,t,x,p\right)=\left(x,p\right)$. Then 
\begin{align*}
f\left(t,x,p\right) &= \mathring{f}\left(\left(X,P\right)\left(0,t,x,p\right)\right)+\int_0^t{g\left(s,\left(X,P\right)\left(s,t,x,p\right)\right)ds}
\end{align*}
and, if $g\in C^1$,
\begin{align*}
\partial_{x,p}f\left(t,x,p\right) &= \left(\partial_{x,p}\mathring{f}\right)\left(\left(X,P\right)\left(0,t,x,p\right)\right)+\int_0^t{\left(\partial_{x,p}g\right)\left(s,\left(X,P\right)\left(s,t,z\right)\right)ds}\\
&\phan-\int_0^t{\left(\partial_{x,p}f\right)\left(s,\left(X,P\right)\left(s,t,z\right)\right)\left(\partial_{x,p}\left(\alpha K\right)\right)\left(s,\left(X,P\right)\left(s,t,z\right)\right)ds};
\end{align*}
see \cite[Sec. 5]{vkg}. The asserted estimates are hence straightforwardly derived.
\end{proof}
The $p$-support condition on $f$ is satisfied if $\supp\alpha\subset B_R$ for some $R>0$: Obviously for $\left| p\right|>\max\left\{R,r,r_0\right\}$ (where $\supp_p\mathring{f}\subset B_{r_0}$) we have $\dot{P}\left(s,t,x,p\right)=0$, hence $P\left(s,t,x,p\right)=p$ and therefore $\mathring{f}\left(\left(X,P\right)\left(0,t,x,p\right)\right)=g\left(s,\left(X,P\right)\left(s,t,x,p\right)\right)=0$.

In the following we denote by $C>0$ some generic constant that may change from line to line, but is only dependent on $T$, $r$, and $\alpha$ (i.e. its $C_b^1$-norm).
\subsubsection{Representation of the fields}
We can derive integral expressions for the fields $E$ and $B$ proceeding similarly to \cite{rvm1}.
\begin{lemma}\label{field}
We have $E=E^0+ES+ET+ED$ and $B=B^0+BS+BT+BD$ where $E^0$, $B^0$ are functionals of the initial data and $d\left(0\right)$, and where
\begin{align*}
ES_j &= \int_0^t{\int_{\left| x-y\right|<t-\tau}{\int{\frac{\left(\alpha\partial_p\left(es_j\right)+es_j\nabla\alpha\right)\cdot Kf+\left(es_j\right)g}{\sqrt{\left(t-\tau\right)^2-\left| x-y\right|^2}}\,dp}dy}d\tau},\\
BS &= \int_0^t{\int_{\left| x-y\right|<t-\tau}{\int{\frac{\left(\alpha\partial_p\left(bs\right)+bs\nabla\alpha\right)\cdot Kf+\left(bs\right)g}{\sqrt{\left(t-\tau\right)^2-\left| x-y\right|^2}}\,dp}dy}d\tau},\\
ET_j &= \int_0^t{\int_{\left| x-y\right|<t-\tau}{\int{\frac{et_j}{\left(t-\tau\right)\sqrt{\left(t-\tau\right)^2-\left| x-y\right|^2}}f\,dp}dy}d\tau},\\
BT &= \int_0^t{\int_{\left| x-y\right|<t-\tau}{\int{\frac{bt}{\left(t-\tau\right)\sqrt{\left(t-\tau\right)^2-\left| x-y\right|^2}}f\,dp}dy}d\tau},\\
ED_j &= -\frac{1}{2\pi}\int_0^t{\int_{\left| x-y\right|<t-\tau}{\frac{\partial_td_j-\int_0^\tau{\partial_{x_j}\divg_xd\,ds}}{\sqrt{\left(t-\tau\right)^2-\left| x-y\right|^2}}\,dy}d\tau},\\
BD &= \frac{1}{2\pi}\int_0^t{\int_{\left| x-y\right|<t-\tau}{\frac{\partial_{x_1}d_2-\partial_{x_2}d_1}{\sqrt{\left(t-\tau\right)^2-\left| x-y\right|^2}}\,dy}d\tau}.
\end{align*}
Furthermore the estimate
\begin{align*}
\left\|E\left(t\right)\right\|_\infty+\left\|B\left(t\right)\right\|_\infty&\leq C\left(\left\|\mathring{f}\right\|_\infty+\left\|\mathring{E}\right\|_{C_b^1}+\left\|\mathring{B}\right\|_{C_b^1}+\left\|d\right\|_{W^{1,1}\left(0,T;C_b^2\right)}\right)\\
&\phan+C\int_0^t{\left(\left(1+\left\|K\left(\tau\right)\right\|_\infty\right)\left\|f\left(\tau\right)\right\|_\infty+\left\|g\left(\tau\right)\right\|_\infty\right)d\tau}
\end{align*}
holds.\\
If additionally $\mathring{E}$, $\mathring{B}\in C_c$, and $d$ is compactly supported in $x$ uniformly in $t$, so are also the fields.
\end{lemma}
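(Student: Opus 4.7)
The plan is to apply the 2D wave equation solution formula (quoted just before Section 1.4) to the second order Maxwell equations in \eqref{GVM}, and then use integration by parts, together with the Vlasov equation, to trade the derivatives on $f$ for derivatives on the wave kernel $1/\sqrt{(t-\tau)^2-|x-y|^2}$. The terms involving only initial data (and $d(0)$ in the initial velocity of $E$) collect into $E^0,B^0$; the terms involving $\partial_t d$, $\partial_x\divg_x d$, and $\partial_{x_1}d_2-\partial_{x_2}d_1$, already in the form of a retarded potential, give exactly $ED$ and $BD$; the terms involving $-\partial_tj_f-\partial_x\rho_f$ for $E$ and $\partial_{x_1}j_{f,2}-\partial_{x_2}j_{f,1}$ for $B$ must be transformed so that no derivatives remain on $f$.

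For this transformation, substitute $j_f=\int\widehat p f\,dp$ and $\rho_f=\int f\,dp$, and rewrite $\partial_tf$ via the Vlasov equation in \eqref{GVM}, i.e.\ $\partial_tf=-\widehat p\cdot\partial_xf-\alpha(p) K\cdot\partial_pf+g$. Integrate by parts in $y$ to shift the $\partial_x$-derivatives onto the wave kernel, and in $p$ to shift the $\partial_p$-derivatives (using $\divg_pK=0$, which removes the self-derivative of $K$). The resulting kernel operations, combined with the identity $(t-\tau)^2-|x-y|^2=(t-\tau)^2(1-|\xi|^2)$, $\xi=(y-x)/(t-\tau)$, collapse algebraically into exactly the quantities $es_j,bs,et_j,bt$ from Section 1.2: the $es,bs$ kernels arise from the $\partial_p$-integration by parts (the product rule producing both the $\alpha\partial_p$ and $(\nabla\alpha)$ terms, and the inhomogeneity $g$ being picked up because $\partial_tf$ is not solenoidal in $p$), whereas the $et,bt$ kernels arise from the direct differentiation of the wave kernel when transforming $\partial_{x_j}$ and the remaining $\widehat p\cdot\partial_xf$ term. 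This mirrors the Glassey--Schaeffer computation in \cite{rvm1}, adapted to the controlled setting and to the generalized Vlasov right-hand side $g$.

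For the $L^\infty$ estimate, use Remark \ref{fund} i) on the support $\{|p|\le r,|\xi|\le 1\}$ to bound $es,bs,\partial_p(es),\partial_p(bs),et,bt$ by $C(r)$; since $f$ is supported in $|p|\le r$, the inner $p$-integral is over a set of finite measure. The remaining $(y,\tau)$-integrals with the factor $1/\sqrt{(t-\tau)^2-|x-y|^2}$ (for $ES,BS$) and $1/((t-\tau)\sqrt{(t-\tau)^2-|x-y|^2})$ (for $ET,BT$) are controlled by Remark \ref{fund} ii) with $l=0$ and $l=1$ respectively. This yields the contributions $C\int_0^t(1+\|K(\tau)\|_\infty)\|f(\tau)\|_\infty+\|g(\tau)\|_\infty\,d\tau$. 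The $ED,BD$ terms require one time derivative and one spatial derivative of $d$ at most (appearing explicitly or through $\int_0^\tau\partial_{x_j}\divg_xd\,ds$), and are thus bounded by $C\|d\|_{W^{1,1}(0,T;C_b^2)}$. The initial data functional $E^0+B^0$ consists of evaluations of $\mathring{E},\mathring{B}$ and their first derivatives on the unit disk (plus a single term in $j_{\mathring f}$), hence bounded by $\|\mathring E\|_{C_b^1}+\|\mathring B\|_{C_b^1}+C\|\mathring f\|_\infty$.

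The compact support assertion is then immediate from finite speed of propagation for the 2D wave equation: all source terms vanish for $|y|$ sufficiently large, and the initial data/initial velocity do so by assumption, so the retarded integrals for $E(t,x),B(t,x)$ vanish for $|x|$ large uniformly in $t\in[0,T]$. The main obstacle is the algebraic integration-by-parts step that identifies the transformed kernels with $es,bs,et,bt$; once this is set up the rest is bookkeeping, with the novelties relative to \cite{rvm1} being the nonconstant cutoff $\alpha$, the source $g$, and the external current contribution $d$.
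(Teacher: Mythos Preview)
Your proposal is correct and follows the same route as the paper: apply the 2D wave representation to the second-order Maxwell equations in \eqref{GVM}, use the Vlasov equation and integration by parts in $y$ and $p$ to obtain the Glassey--Schaeffer kernels $es,bs,et,bt$ (with the extra $\nabla\alpha$ and $g$ contributions), and then invoke Remark~\ref{fund} for the $L^\infty$ bound and finite propagation speed for the support. The paper's own proof is literally a pointer to \cite[Thm.~1]{rvm1} plus Remark~\ref{fund}, so your sketch is, if anything, more explicit; the only slip is the phrase ``one spatial derivative of $d$ at most'' for $ED$, since $\partial_{x_j}\divg_x d$ is second order in $x$---but your stated bound $C\|d\|_{W^{1,1}(0,T;C_b^2)}$ is the correct one.
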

\begin{proof}
The representation formula are derived in much the same way as in \cite[Thm. 1]{rvm1}. The only difference is that here the source terms $g$ and $d$ appear. The support assertion is an immediate consequence of the representation formula. Physically, this is a result of the fact that electromagnetic fields can not propagate faster than the speed of light. Furthermore, the remaining estimate is a consequence of Remark \ref{fund}.
\end{proof}
\begin{remark}\label{remcomp}
If $f\left(t,x,\cdot\right)$ is compactly supported for every $t$, $x$, but not necessarily uniformly in $t$, $x$, nevertheless the fields are given by the formula above. For this, one does not need the uniformity. However, the estimate can not be obtained.
\end{remark}
\subsubsection{First derivatives of the fields}
The next step is to differentiate these representation formulas and deriving certain estimates. The method is similar to the previous one. The constant $C$ may now only depend on $T$, $r$, the initial data (i.e. their $C_b^2$-norms), and $\left\|\alpha\right\|_{C_b^1}$.
\begin{lemma}\label{fieldder}
If $g\in C^1$ and $d\in W^{2,1}\left(0,T;C_b^3\right)$, then the derivatives of the $S$-, $T$-, and $D$-terms are given by
\begin{align*}
\partial_{x_i}BS &= \int_0^t{\int_{\left| x-y\right|<t-\tau}{\int{\frac{\left(\alpha\partial_p\left(bs\right)+bs\nabla\alpha\right)\cdot\left(f\partial_{x_i}K+K\partial_{x_i}f\right)+bs\partial_{x_i}g}{\sqrt{\left(t-\tau\right)^2-\left| x-y\right|^2}}\,dp}dy}d\tau},\\
\partial_{x_i}BT &= \int_0^t{\int_{\left| x-y\right|<t-\tau}{\int{\frac{bt}{\left(t-\tau\right)\sqrt{\left(t-\tau\right)^2-\left| x-y\right|^2}}\partial_{x_i}f\,dp}dy}d\tau},\\
\partial_{x_i}BD &= \frac{1}{2\pi}\int_0^t{\int_{\left| x-y\right|<t-\tau}{\frac{\partial_{x_i}\partial_{x_1}d_2-\partial_{x_i}\partial_{x_2}d_1}{\sqrt{\left(t-\tau\right)^2-\left| x-y\right|^2}}\,dy}d\tau},\\
\partial_{x_i}ES &= \int_0^t{\int_{\left| x-y\right|<t-\tau}{\int{\frac{\left(\alpha\partial_p\left(es\right)+es\nabla\alpha\right)\cdot\left(f\partial_{x_i}K+K\partial_{x_i}f\right)+es\partial_{x_i}g}{\sqrt{\left(t-\tau\right)^2-\left| x-y\right|^2}}\,dp}dy}d\tau},\\
\partial_{x_i}ET &= \int_0^t{\int_{\left| x-y\right|<t-\tau}{\int{\frac{et}{\left(t-\tau\right)\sqrt{\left(t-\tau\right)^2-\left| x-y\right|^2}}\partial_{x_i}f\,dp}dy}d\tau},\\
\partial_{x_i}ED &= \frac{1}{2\pi}\int_0^t{\int_{\left| x-y\right|<t-\tau}{\frac{\partial_t\partial_{x_i}d-\int_0^\tau{\partial_{x_i}\partial_x\divg_xd\,ds}}{\sqrt{\left(t-\tau\right)^2-\left| x-y\right|^2}}\,dy}d\tau},\\
\partial_tBS &= \int_0^t{\int_{\left| x-y\right|<t-\tau}{\int{\frac{\left(\alpha\partial_p\left(bs\right)+bs\nabla\alpha\right)\cdot\left(f\partial_tK+K\partial_tf\right)+bs\partial_tg}{\sqrt{\left(t-\tau\right)^2-\left| x-y\right|^2}}\,dp}dy}d\tau}\\
&\phan+\int_{\left| x-y\right|<t}{\int{\frac{\left.\left(\alpha\partial_p\left(bs\right)+bs\nabla\alpha\right)\right|_{\tau=0}\cdot K\left(0\right)\mathring{f}+\left.bs\right|_{\tau=0}g\left(0\right)}{\sqrt{t^2-\left| x-y\right|^2}}\,dp}dy},\\
\partial_tBT &= \int_0^t{\int_{\left| x-y\right|<t-\tau}{\int{\frac{bt}{\left(t-\tau\right)\sqrt{\left(t-\tau\right)^2-\left| x-y\right|^2}}\partial_tf\,dp}dy}d\tau}\\
&\phan+\int_{\left| x-y\right|<t}{\int{\frac{\left.bt\right|_{\tau=0}}{t\sqrt{t^2-\left| x-y\right|^2}}\mathring{f}\,dp}dy},\\
\partial_tBD &= \frac{1}{2\pi}\int_0^t{\int_{\left| x-y\right|<t-\tau}{\frac{\partial_t\partial_{x_1}d_2-\partial_t\partial_{x_2}d_1}{\sqrt{\left(t-\tau\right)^2-\left| x-y\right|^2}}\,dy}d\tau}\\
&\phan+\frac{1}{2\pi}\int_{\left| x-y\right|<t}{\frac{\partial_{x_1}d_2\left(0\right)-\partial_{x_2}d_1\left(0\right)}{\sqrt{t^2-\left| x-y\right|^2}}\,dy},\\
\partial_tES &= \int_0^t{\int_{\left| x-y\right|<t-\tau}{\int{\frac{\left(\alpha\partial_p\left(es\right)+es\nabla\alpha\right)\cdot\left(f\partial_tK+K\partial_tf\right)+es\partial_tg}{\sqrt{\left(t-\tau\right)^2-\left| x-y\right|^2}}\,dp}dy}d\tau}\\
&\phan+\int_{\left| x-y\right|<t}{\int{\frac{\left.\left(\alpha\partial_p\left(es\right)+es\nabla\alpha\right)\right|_{\tau=0}\cdot K\left(0\right)\mathring{f}+\left.es\right|_{\tau=0}g\left(0\right)}{\sqrt{t^2-\left| x-y\right|^2}}\,dp}dy},\\
\partial_tET &= \int_0^t{\int_{\left| x-y\right|<t-\tau}{\int{\frac{et}{\left(t-\tau\right)\sqrt{\left(t-\tau\right)^2-\left| x-y\right|^2}}\partial_tf\,dp}dy}d\tau}\\
&\phan+\int_{\left| x-y\right|<t}{\int{\frac{\left.et\right|_{\tau=0}}{t\sqrt{t^2-\left| x-y\right|^2}}\mathring{f}\,dp}dy},\\
\partial_tED &= -\frac{1}{2\pi}\int_0^t{\int_{\left| x-y\right|<t-\tau}{\frac{\partial_t^2d-\partial_x\divg_xd}{\sqrt{\left(t-\tau\right)^2-\left| x-y\right|^2}}\,dy}d\tau}\\
&\phan-\frac{1}{2\pi}\int_{\left| x-y\right|<t}{\frac{\partial_td_j\left(0\right)}{\sqrt{t^2-\left| x-y\right|^2}}\,dy}.
\end{align*}
Furthermore the derivatives are estimated by
\begin{align*}
\left\|\partial_{t,x}E\left(t\right)\right\|_\infty+\left\|\partial_{t,x}B\left(t\right)\right\|_\infty&\leq C\left(1+\left\|K\right\|_\infty+\left\|f\right\|_\infty+\left\|g\right\|_\infty\right)\left(1+\left\|K\right\|_\infty\right)^2\\
&\phan \cdot\left(1+\ln_+\left(\left|\left\|\partial_{x,p}f\right\|\right|_{\left[0,t\right]}\right)+\int_0^t{\left\|\partial_{t,x,p}K\left(\tau\right)\right\|_\infty d\tau}\right)\\
&\phan +C\int_0^t{\left\|\partial_{t,x}g\left(\tau\right)\right\|_\infty d\tau}+C\left\|d\right\|_{W^{2,1}\left(0,T;C_b^3\right)}
\end{align*}
if $\left\|K\right\|_\infty<\infty$. Here $\left|\left\|a\right\|\right|_{\left[0,t\right]}:=\sup_{\substack{0\leq\tau\leq t}}{\left\|a\left(\tau\right)\right\|_\infty}$.
\end{lemma}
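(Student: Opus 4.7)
The plan is to obtain the representation formulas by changes of variables that free both the integration domain and the singular kernel from the differentiation variable, then differentiate under the integral sign, and finally revert the substitutions. For each $\partial_{x_i}$ I substitute $y = x + (t-\tau)z$ with $z\in B_1$: this pins the $y$-ball to $B_1$, turns $\xi$ into $z$, and writes $\sqrt{(t-\tau)^2-|x-y|^2} = (t-\tau)\sqrt{1-|z|^2}$, all independent of $x$. After the Jacobian $(t-\tau)^2 dz$ has absorbed the appropriate power, the $x$-dependence is confined to the arguments of $f,K,g,d$, so differentiation under the integral is justified; undoing the substitution produces the stated $\partial_{x_i}$-formulas. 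For each $\partial_t$ I instead substitute $\sigma = t-\tau$: then $\xi = (y-x)/\sigma$ is $t$-independent, and $t$ enters only through the upper limit $\sigma = t$ and through the arguments of $f,K,g,d$ (via $\tau = t-\sigma$). Leibniz's rule produces a bulk contribution (the $t$-derivative commuted inside) together with a boundary term at $\sigma = t$, which in the original variables sits at $\tau = 0$ and yields the initial-data integrals in $\mathring f$, $d(0)$ and $\partial_td(0)$. On the $S$- and $T$-terms the product rule applied to $\alpha K f$ distributes the derivative between $K$ and $f$, generating the $f\partial K + K\partial f$ combinations in the formulas.

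For the estimate I bound each of the twelve pieces separately. The $D$-terms are controlled via Remark \ref{fund} ii) with $l=0$, yielding the summand $C\|d\|_{W^{2,1}(0,T;C_b^3)}$. The $S$-terms have the mild kernel ($l=0$); using the bounds on $\partial_p(es), \partial_p(bs)$ from Remark \ref{fund} i), the compact $p$-support $B_r$, and Fubini in $(\tau,y,p)$ one gets contributions involving $\|K\|_\infty\|\partial_{t,x,p}K\|_\infty$, $\|K\|_\infty\|\partial_{x,p}f\|_\infty$, $\|f\|_\infty$, and $\|\partial_{t,x}g\|_\infty$, all fitting into the right-hand side. The boundary terms at $\tau=0$ involve only initial data together with $d(0),g(0),K(0)$ and are absorbed in $C$, since $C$ is already allowed to depend on the $C_b^2$-norms of $\mathring E,\mathring B$ and on $\|\mathring f\|_{C^1}$.

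The main obstacle is the logarithm. The four $T$-terms $\partial_{x_i}ET,\partial_{x_i}BT,\partial_tET,\partial_tBT$ carry the critical kernel of Remark \ref{fund} ii) with $l=1$, so a naive bound yields only the linear dependence $C\|\partial_{x,p}f\|_\infty$, which would force exponential rather than polynomial growth in the subsequent Gronwall argument. To pass from linear to logarithmic I would follow the Glassey-Schaeffer technique from \cite{rvm1,tah}: split the $\tau$-integration into $[0,t-\epsilon]$ and $[t-\epsilon,t]$, use the crude $L^\infty$-bound on the short interval, and on the long interval use the Vlasov equation $\partial_tf + \widehat p\cdot\partial_xf + \alpha K\cdot\partial_pf = g$ to rewrite the appearing $\partial_{t,x}f$ as a sum containing $\alpha K\cdot\partial_pf$, which is then integrated by parts in $p$ to shift $\partial_p$ onto the bounded factors $et,bt$ (the $p$-boundary vanishes by the compact $p$-support of $f$). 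Calibrating $\epsilon$ against $\|\partial_{x,p}f\|_\infty$ extracts the $\ln_+$ factor. The prefactor $(1+\|K\|_\infty+\|f\|_\infty+\|g\|_\infty)(1+\|K\|_\infty)^2$ then arises because each substitution of the Vlasov equation costs one power of $\|K\|_\infty$, the integration by parts in $p$ generates a further factor involving $\|K\|_\infty$ (through $\partial_p(\alpha K)$ and $\nabla\alpha$), and summing the near- and far-region contributions together with the $S$- and $D$-bounds yields the form stated in the lemma.
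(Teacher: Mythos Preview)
Your approach is essentially the one the paper invokes: the paper's proof consists of a single sentence deferring to \cite{rvm1}, Theorem~3, and you have correctly sketched the change-of-variables argument for the representation formulas and identified the Glassey--Schaeffer $\epsilon$-splitting as the source of the logarithm.

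One point in your sketch of the $T$-term estimate is underspecified and, as written, does not close. You say you ``use the Vlasov equation \ldots\ to rewrite the appearing $\partial_{t,x}f$ as a sum containing $\alpha K\cdot\partial_pf$'', but the Vlasov equation alone does not eliminate the bad derivative: substituting for $\partial_t f$ leaves a $\widehat p\cdot\partial_x f$ term with the same critical singularity, and $\partial_{x_i}f$ cannot be rewritten at all from the equation by itself. The mechanism in \cite{rvm1} is the decomposition of $\partial_{y_i}$ (and of $\partial_\tau$) into the streaming operator $T=\partial_\tau+\widehat p\cdot\partial_y$ and operators $S$ tangent to the backward light cone: $Tf$ is then replaced by $-\alpha K\cdot\partial_p f+g$ via Vlasov and integrated by parts in $p$, while the $S$-pieces are integrated by parts along the cone in $(\tau,y)$, which gains a factor $(t-\tau)^{-1}$ and renders the kernel integrable. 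Only after both steps does the $\epsilon$-calibration produce $\ln_+\left|\left\|\partial_{x,p}f\right\|\right|_{[0,t]}$. Since you already cite \cite{rvm1,tah}, this is a matter of filling in the cone-adapted decomposition rather than a change of strategy.
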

\begin{proof}
Similarly as before, this is proved by following \cite{rvm1}, now considering Theorem 3 there\-in.
\end{proof}

\subsection{A-priori bounds on the support with respect to $p$}
The most important property that is exploited later while showing global existence of a solution of \eqref{CVM}, is to have a-priori bounds on the $p$-support of $f$. This means: If we have a solution $\left(f,E,B\right)$ of \eqref{CVM} on $\left[0,T\right[$ with $f\in C^1$ and $E$, $B$ of class $C^2$, we have to show that
\begin{align*}
P\left(t\right):=\inf\left\{a>0\mid f\left(\tau,x,p\right)=0\text{ for all }\left| p\right|\geq a,\,0\leq\tau\leq t\right\}+3
\end{align*}
is controlled, i.e. $P\left(t\right)\leq Q$ for $0\leq t<T$ where $Q>0$ is some constant only dependent on $T$, the initial data (i.e. their $C_b^1$-norms and $P\left(0\right)$), $L$, and $\left\|U\right\|_V$. In the following the constants $C$ may also only depend on these numbers. Note that, per definition, $P$ is monotonically increasing and that $\left| f\right|\leq\left\|\mathring{f}\right\|_\infty$. Moreover, $P\left(t\right)<\infty$ for each $0\leq t<T$ because we have an a priori estimate on the $x$-support of $f$ via $\left|\dot{X}\right|\leq 1$, so that $\supp_xf\subset B_s$, and on the compact set $\left[0,t\right]\times\overline{B_s}$ the electromagnetic fields are bounded; hence the force field $E-\widehat{p}^\bot B$ is bounded there. Furthermore, \eqref{LC} holds by Lemma \ref{LCco}. Therefore and with Remark \ref{remcomp} we have the representations of the fields as given in Lemma \ref{field}. Moreover, we can also demand that $\left(f,E,B\right)$ solves
\begin{align}\tag{CVM2nd}\label{CVM2nd}\left.\begin{aligned}
\partial_t f+\widehat{p}\cdot\partial_x f+\left(E-\widehat{p}^\bot B\right)\cdot\partial_p f &= 0,\\
\partial_t^2 E-\Delta E &= -\partial_t j_f-\partial_t U-\partial_x\rho_f+\partial_x\int_0^t{\divg_xU\,d\tau},\\
\partial_t^2 B-\Delta B &= \partial_{x_1}j_{f,2}-\partial_{x_2}j_{f,1}+\partial_{x_1}U_2-\partial_{x_2}U_1,\\
\left(f,E,B\right)\left(0\right) &= \left(\mathring{f},\mathring{E},\mathring{B}\right),\\
\partial_t E\left(0\right) &= \left(\partial_{x_2}\mathring{B},-\partial_{x_1}\mathring{B}\right)-j_{\mathring{f}}-U\left(0\right),\\
\partial_t B\left(0\right) &= -\partial_{x_1}\mathring{E}_2+\partial_{x_2}\mathring{E}_1
\end{aligned}\right\}\end{align}
instead of \eqref{CVM} since both systems are equivalent by Lemma \ref{equi}.

We use the notation
\begin{align*}
\omega:=\frac{y-x}{\left| y-x\right|},\,a\wedge b:=a_1b_2-a_2b_1,\,K:=E-\widehat{p}^\bot B
\end{align*}
and follow \cite{rvm2}.
\subsubsection{Energy estimates}
The key in \cite{rvm2} is a sample of estimates that follow from the local energy conservation law
\begin{align*}
\partial_t\left(\frac{1}{2}\left| E\right|^2+\frac{1}{2}B^2+4\pi\int{f\sqrt{1+\left| p\right|^2}dp}\right)+\divg_x\left(-BE^\bot+4\pi\int{fp\,dp}\right)=0.
\end{align*}
However, this equation is false in our situation due to the external currents $U$. But still we are able to prove an analogue of \cite[Lem. 1]{rvm2}:
\begin{lemma}\label{rest}
Let $0\leq R\leq T$. The estimates
\begin{enumerate}[i)]
	\item \begin{align*}
	\sup_{\substack{x\in\R^2}}{\int_{\left| y-x\right|<R}{\left(\frac{1}{2}\left| E\right|^2+\frac{1}{2}B^2+4\pi\int{f\sqrt{1+\left| p\right|^2}dp}\right)dy}}\leq C,
	\end{align*}
	\item \begin{align*}
	\sup_{\substack{x\in\R^2}}\int_0^t\int_{\left| y-x\right|=t-\tau+R}&\left(\frac{1}{2}\left(E\cdot\omega\right)^2+\frac{1}{2}\left(B+\omega\wedge E\right)^2\right.\\
	& +\left.4\pi\int{f\sqrt{1+\left| p\right|^2}\left(1+\widehat{p}\cdot\omega\right)dp}\right)dS_yd\tau\leq C,
	\end{align*}
	\item \begin{align*}
	\sup_{\substack{x\in\R^2}}{\int_{\left| y-x\right|<R}{\rho_f^{\frac{3}{2}}\,dy}}\leq C,
	\end{align*}
	\item \begin{align*}
	\sup_{\substack{x\in\R^2}}{\int_{\left| y-x\right|<R}{\left(\int{\frac{f}{\sqrt{1+\left| p\right|^2}}\,dp}\right)^3 dy}}\leq C
	\end{align*}
\end{enumerate}
hold for all $t\in\left[0,T\right[$.
\end{lemma}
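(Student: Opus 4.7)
The plan is to adapt the backward light-cone energy argument of Glassey-Schaeffer \cite{rvm2}, keeping track of the defect introduced by the external current $U$. Dotting the first-order Maxwell equations in \eqref{CVM} with $(E,B)$ and multiplying the Vlasov equation by $4\pi\sqrt{1+|p|^2}$ followed by integration in $p$ (with a by-parts step that kills $\widehat{p}^\bot\cdot\widehat{p}=0$), the standard local energy law acquires exactly one source term, coming from $-U$ in the Ampère equation:
\begin{equation*}
\partial_t e + \divg_x S = -E\cdot U,\quad e = \tfrac12|E|^2+\tfrac12 B^2 + 4\pi\!\int\! f\sqrt{1+|p|^2}\,dp,\quad S=-BE^\bot+4\pi\!\int\! fp\,dp.
\end{equation*}
Using $|E|^2=(E\cdot\omega)^2+(\omega\wedge E)^2$ and $\widehat{p}\cdot\omega\geq -1$, a direct computation shows that the flux integrand $e+S\cdot\omega$ equals precisely the nonnegative expression appearing in (ii).

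The core step is to fix $x\in\R^2$, $R\in[0,T]$, and to integrate the identity over the truncated backward cone with vertex $(t+R,x)$. Setting $\Phi(\tau):=\int_{|y-x|<t+R-\tau}e(\tau,y)\,dy$, one gets for every $\tau\in[0,t]$
\begin{equation*}
\Phi(\tau) + \int_0^\tau\!\!\int_{|y-x|=t+R-s}\!\!(e+S\cdot\omega)\,dS_y\,ds = \Phi(0) - \int_0^\tau\!\!\int_{|y-x|<t+R-s}\!\! E\cdot U\,dy\,ds.
\end{equation*}
Since $|E|^2\leq 2e$, Cauchy-Schwarz bounds the source by $\int_0^\tau\sqrt{2\Phi(s)}\,\|U(s)\|_{L^2(\R^2)}\,ds$, and because $\supp U(s)\subset B_L$ and $V\hookrightarrow L^1(0,T;C_b^0)$, the integral $\int_0^T\|U\|_{L^2}\,ds$ is controlled by $L$ and $\|U\|_V$. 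Discarding the nonnegative flux and applying the square-root Grönwall lemma (if $\phi\leq A+\int\sqrt{\phi}\,\psi$ then $\sqrt{\phi}\leq\sqrt{A}+\tfrac12\int\psi$) yields a bound on $\Phi(\tau)$ uniform in $\tau\in[0,t]$ and $x$, which is (i); feeding this back into the identity controls the lateral flux and gives (ii).

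Parts (iii) and (iv) then fall out of (i) by the standard pointwise interpolation. For arbitrary $P>0$, splitting $\int f\,dp$ at $|p|=P$, using $\|f\|_\infty\leq\|\mathring f\|_\infty$ on $\{|p|<P\}$, and the bound $1\leq(1+|p|^2)^{1/2}/(1+P^2)^{1/2}$ on $\{|p|\geq P\}$ gives $\rho_f\leq C(P^2+P^{-1}\!\int\! f\sqrt{1+|p|^2}\,dp)$; optimising in $P$ produces $\rho_f^{3/2}\leq C\int f\sqrt{1+|p|^2}\,dp$, and integration over $|y-x|<R$ combined with (i) yields (iii). An analogous split using $\int_{|p|<P}(1+|p|^2)^{-1/2}\,dp\leq CP$ and $(1+|p|^2)^{-1/2}\leq(1+P^2)^{-1}(1+|p|^2)^{1/2}$ on $\{|p|\geq P\}$ gives $\int f(1+|p|^2)^{-1/2}\,dp\leq C(\int f\sqrt{1+|p|^2}\,dp)^{1/3}$, whose cube integrates via (i) to yield (iv).

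The main obstacle is the Grönwall step: the controlled system has no exact energy conservation, and the forcing $-E\cdot U$ depends on the very field whose $L^2$-norm is being estimated. The linearity of this forcing in $E$ is essential, since it only produces a $\sqrt{\Phi}$ nonlinearity and hence can be closed as an ODE inequality; the compact support of $U$ in $B_L$ and the embedding $V\hookrightarrow L^1(0,T;L^\infty(B_L))$ are exactly what is needed to keep the accumulated contribution of $U$ on $[0,T]$ finite and $x$-independent.
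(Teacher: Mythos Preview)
Your argument is correct, but it proceeds along a different line than the paper's proof. The paper first \emph{decomposes} the fields as $E=E_{\inte}+E_{\ext}$, $B=B_{\inte}+B_{\ext}$, where $(E_{\ext},B_{\ext})$ solves the homogeneous Maxwell system with source $U$ and zero data; the external fields are bounded in $L^\infty$ via Kato's theory for symmetric hyperbolic systems plus Sobolev embedding. The local energy balance is then written for the \emph{internal} energy $e_{\inte}$, whose defect is $E_{\ext}\cdot j_f$. Since $\|E_{\ext}\|_\infty\leq C$ and $\|j_f(\tau)\|_{L^1}\leq\|\rho_f(\tau)\|_{L^1}=\|\rho_{\mathring f}\|_{L^1}$ (conservation of charge), the source term is bounded outright by a constant, and the cone identity gives (i) and (ii) \emph{without any Gr\"onwall step}. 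Finally, $e\leq 2e_{\inte}+C$ transfers the bounds to the full fields.

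Your route avoids the splitting entirely: you work with the full energy $e$, obtain the defect $-E\cdot U$, control it by $\sqrt{2\Phi}\,\|U\|_{L^2}$, and close with the square-root Gr\"onwall inequality. This is more self-contained (no appeal to well-posedness of the external Maxwell system, no Sobolev embedding), at the price of a Gr\"onwall argument where the paper needs none. Parts (iii) and (iv) are handled identically in both proofs. A minor cosmetic point: the lateral flux in your cone identity carries a factor $1/\sqrt{2}$ (the slope of the cone), which you have suppressed; this is harmless since only nonnegativity of $e+S\cdot\omega$ is used.
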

\begin{proof}
We split the electro-magnetic fields into internal and external fields; precisely, they are defined by
\begin{align*}
\partial_t E_{\inte,1}-\partial_{x_2}B_{\inte} &= -j_{f,1},\\
\partial_t E_{\inte,2}+\partial_{x_1}B_{\inte} &= -j_{f,2},\\
\partial_t B_{\inte}+\partial_{x_1}E_{\inte,2}-\partial_{x_2}E_{\inte,1} &= 0,\\
\left(E_{\inte},B_{\inte}\right)\left(0\right) &= \left(\mathring{E},\mathring{B}\right)
\end{align*}
and
\begin{align*}
\partial_t E_{\ext,1}-\partial_{x_2}B_{\ext} &= -U_1,\\
\partial_t E_{\ext,2}+\partial_{x_1}B_{\ext} &= -U_2,\\
\partial_t B_{\ext}+\partial_{x_1}E_{\ext,2}-\partial_{x_2}E_{\ext,1} &= 0,\\
\left(E_{\ext},B_{\ext}\right)\left(0\right) &= 0.
\end{align*}
Indeed, the existence of $\left(E_{\ext},B_{\ext}\right)$ is guaranteed since the (time evolutionary) Max\-well equations form a linear, symmetric, hyperbolic system, see \cite[Thm. I]{kato}. Because of $U\in V$ we have $E_{\ext}$, $B_{\ext}\in C\left(0,T;H^3\right)\cap C^1\left(0,T;H^2\right)\subset C^1$; furthermore
\begin{align*}
\left\|\left(E_{\ext},B_{\ext}\right)\left(t\right)\right\|_{\infty}\leq C\left\|\left(E_{\ext},B_{\ext}\right)\left(t\right)\right\|_{H^2}\leq C\int_0^T{\left\|U\left(\tau\right)\right\|_{H^2}d\tau}\leq C\left\|U\right\|_V=C
\end{align*}
by Sobolev's embedding theorem and the support condition on $U$. Because of the linearity of the Maxwell equations it holds that $E_{\inte}:=E-E_{\ext}$ and $B_{\inte}:=B-B_{\ext}$ solve their equations mentioned earlier and are of class $C^1$. Now let
\begin{align*}
e_{\inte}:=\frac{1}{2}\left| E_{\inte}\right|^2+\frac{1}{2}B_{\inte}^2+4\pi\int{f\sqrt{1+\left| p\right|^2}dp}
\end{align*}
which is physically the energy density of the internal system and
\begin{align*}
e:=\frac{1}{2}\left| E\right|^2+\frac{1}{2}B^2+4\pi\int{f\sqrt{1+\left| p\right|^2}dp}.
\end{align*}
We have
\begin{align*}
& \partial_te_{\inte}+\divg_x\left(-B_{\inte}E_{\inte}^\bot+4\pi\int{fp\,dp}\right)\\
&= E_{\inte}\cdot\partial_tE_{\inte}+B_{\inte}\partial_tB_{\inte}+4\pi\int{\partial_tf\sqrt{1+\left| p\right|^2}dp}+E_{\inte,2}\partial_{x_1}B_{\inte}+B_{\inte}\partial_{x_1}E_{\inte,2}\\
&\phan -E_{\inte,1}\partial_{x_2}B_{\inte}-B_{\inte}\partial_{x_2}E_{\inte,1}+4\pi\int{\partial_xf\cdot p\,dp}\\
&= -E_{\inte}\cdot j_f-4\pi\int{K\cdot\partial_pf\sqrt{1+\left| p\right|^2}dp}\\
&= -E_{\inte}\cdot j_f+4\pi E\cdot\int{f\partial_p\sqrt{1+\left| p\right|^2}dp}+4\pi B\int{f\divg_pp^\bot dp}\\
&= E_{\ext}\cdot j_f
\end{align*}
where we made use of the respective Vlasov-Maxwell equations, $\partial_p\sqrt{1+\left| p\right|^2}=\widehat{p}$, and $\divg_pp^\bot=0$. We integrate this identity over a suitable set and arrive at
\begin{align*}
& \int_0^t{\int_{\left| y-x\right|<t-\tau+R}{E_{\ext}\cdot j_f\,dy}d\tau}\\
&= \int_0^t{\int_{\left| y-x\right|<t-\tau+R}{\left(\partial_\tau e_{\inte}+\divg_y\left(-B_{\inte}E_{\inte}^\bot+4\pi\int{fpdp}\right)\right)dy}d\tau}\\
&= -\int_{\left| y-x\right|<t+R}{e_{\inte}\left(0,y\right)dy}+\int_{\left| y-x\right|<R}{e_{\inte}\left(t,y\right)dy}\\
&\phan +\frac{1}{\sqrt{2}}\int_0^t{\int_{\left| y-x\right|=t-\tau+R}{\left(e_{\inte}+\omega\cdot\left(-B_{\inte}E_{\inte}^\bot+4\pi\int{fpdp}\right)\right)dS_y}d\tau}\numb\label{e0}
\end{align*}
after an integration by parts in $\left(\tau,y\right)$. The integrand of the last integral is non-negative because of
\begin{align*}
0&\leq d_{\inte}:=\frac{1}{2}\left(E_{\inte}\cdot\omega\right)^2+\frac{1}{2}\left(B_{\inte}+\omega\wedge E_{\inte}\right)^2+4\pi\int{f\sqrt{1+\left| p\right|^2}\left(1+\widehat{p}\cdot\omega\right)dp}\\
&= \frac{1}{2}E_{\inte,1}^2\omega_1^2+\frac{1}{2}E_{\inte,2}^2\omega_2^2+\frac{1}{2}B_{\inte}^2+B_{\inte}\omega_1E_{\inte,2}-B_{\inte}\omega_2E_{\inte,1}+\frac{1}{2}E_{\inte,2}^2\omega_1^2\\
&\phan +\frac{1}{2}E_{\inte,1}^2\omega_2^2+4\pi\int{f\sqrt{1+\left| p\right|^2}dp}+\omega\cdot 4\pi\int{fp\,dp}\\
&= \frac{1}{2}E_{\inte,1}^2+\frac{1}{2}E_{\inte,2}^2+\frac{1}{2}B_{\inte}^2+4\pi\int{f\sqrt{1+\left| p\right|^2}dp}+\omega_1B_{\inte}E_{\inte,2}-\omega_2B_{\inte}E_{\inte,1}\\
& \phan+\omega\cdot 4\pi\int{fp\,dp}\\
&= e_{\inte}+\omega\cdot\left(-B_{\inte}E_{\inte}^\bot+4\pi\int{fp\,dp}\right)\numb\label{int0};
\end{align*}
note that $1+\widehat{p}\cdot\omega\geq 1-1\cdot 1=0$ and $\left|\omega\right|=1$. The left hand side of \eqref{e0} has to be investigated. The external fields are bounded by $C$, hence
\begin{align}\label{source}
\left|\int_0^t{\int_{\left| y-x\right|<t-\tau+R}{E_{\ext}\cdot j_f\,dy}d\tau}\right|\leq C\int_0^t{\left\|j_f\left(\tau\right)\right\|_{L^1}d\tau}\leq C\int_0^t{\left\|\rho_f\left(\tau\right)\right\|_{L^1}d\tau}\leq C
\end{align}
since the $L^1$-norm of $\rho_f$ is constant in time.

Now we can prove the assertions using \eqref{e0}, \eqref{int0}, and \eqref{source}:
\begin{enumerate}[i)]
	\item We have
	\begin{align*}
	\int_{\left| y-x\right|<R}{e_{\inte}\,dy}\leq\int_{\left| y-x\right|<t+R}{e_{\inte}\left(0,y\right)dy}+C\leq C\left(R+t\right)^2+C\leq C
	\end{align*}
	since $t,R\leq T$. Together with
	\begin{align*}
	e\leq 2e_{\inte}+\left| E_{\ext}\right|^2+\left| B_{\ext}\right|^2\leq 2e_{\inte}+C
	\end{align*}
	we conclude
	\begin{align*}
	\int_{\left| y-x\right|<R}{e\,dy}\leq C+CR^2\leq C.
	\end{align*}
	\item Similarly,
	\begin{align*}
	\int_0^t{\int_{\left| y-x\right|=t-\tau+R}{d_{\inte}\,dS_y}d\tau}\leq\sqrt{2}\int_{\left| y-x\right|<t+R}{e_{\inte}\left(0,y\right)dy}+C\leq C
	\end{align*}
	and
	\begin{align*}
	d&:=\frac{1}{2}\left(E\cdot\omega\right)^2+\frac{1}{2}\left(B+\omega\wedge E\right)^2+4\pi\int{f\sqrt{1+\left| p\right|^2}\left(1+\widehat{p}\cdot\omega\right)dp}\\
	&\leq 2d_{\inte}+2\left| E_{\ext}\right|^2+\left| B_{\ext}\right|^2\leq 2d_{\inte}+C
	\end{align*}
	yield
	\begin{align*}
	\int_0^t{\int_{\left| y-x\right|=t-\tau+R}{d\,dS_y}d\tau}\leq C+Ct\left(t+R\right)^2\leq C.
	\end{align*}
	\item For $r>0$ it holds that
	\begin{align*}
	\rho_f&= 4\pi\int{f\,dp}=4\pi\int_{\left| p\right|<r}{f\,dp}+4\pi\int_{\left| p\right|\geq r}{f\,dp}\\
	&\leq Cr^2+4\pi r^{-1}\int_{\left| p\right|\geq r}{f\sqrt{1+\left| p\right|^2}dp}\leq C\left(r^2+r^{-1}e\right).
	\end{align*}
	Now choose $r:=e^{\frac{1}{3}}>0$ to derive $\rho_f\leq Ce^{\frac{2}{3}}$ (if $e=0$ then also $\rho_f=0$) and hence
	\begin{align*}
	\int_{\left| y-x\right|<R}{\rho_f^{\frac{3}{2}}\,dy}\leq C\int_{\left| y-x\right|<R}{e\,dy}\leq C.
	\end{align*}
	\item Similarly,
	\begin{align*}
	\int{\frac{f}{\sqrt{1+\left| p\right|^2}}\,dp}&\leq C\int_{\left| p\right|<r}{\frac{1}{\sqrt{1+\left| p\right|^2}}\,dp}+\frac{1}{1+r^2}\int_{\left| p\right|\geq r}{f\sqrt{1+\left| p\right|^2}dp}\\
	&\leq C\int_0^r{\frac{s}{\sqrt{1+s^2}}\,ds}+\frac{1}{r^2}e\leq C\left(r+r^{-2}e\right)\leq Ce^{\frac{1}{3}}
	\end{align*}
	for again $r:=e^{\frac{1}{3}}$ which yields
	\begin{align*}
	\int_{\left| y-x\right|<R}{\left(\int{\frac{f}{\sqrt{1+\left| p\right|^2}}\,dp}\right)^3 dy}\leq C\int_{\left| y-x\right|<R}{e\,dy}\leq C.
	\end{align*}
\end{enumerate}
\end{proof}

\subsubsection{Estimates on the fields}
The crucial problem is to estimate the fields in a proper way. To this end, we use the representation formula stated in Lemma \ref{field}. Unfortunately, the estimates there can not be applied because, of course, we can not assume that $P\left(t\right)$ is controlled. 
\begin{lemma}
We have
\begin{align*}
\left| ES_1\right|+\left| ES_2\right|+\left| BS\right|&\leq CP\left(t\right)\ln P\left(t\right)+C\int_0^t{\left(\left\|E\left(\tau\right)\right\|_\infty+\left\|B\left(\tau\right)\right\|_\infty\right)d\tau},\\
\left| ET_1\right|+\left| ET_2\right|+\left| BT\right|&\leq CP\left(t\right)\ln P\left(t\right),\\
\left| ED\right|,\left| BD\right|&\leq C\left\|U\right\|_{W^{1,1}\left(0,T;C_b^2\right)}\leq C,\\
\left| E^0\right|,\left| B^0\right|&\leq C.
\end{align*}
\end{lemma}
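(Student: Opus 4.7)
In the setting of \eqref{CVM2nd} the functions in the representation formulas of Lemma \ref{field} satisfy $\alpha\equiv 1$, $\nabla\alpha\equiv 0$, $g\equiv 0$, and $d=U\in V$, and $f$ satisfies $\text{supp}_p f(\tau,\cdot,\cdot)\subset B_{P(\tau)}$ together with $\|f\|_\infty\leq\|\mathring f\|_\infty$. The plan is to estimate each of the eight terms $E^0$, $B^0$, $ES$, $BS$, $ET$, $BT$, $ED$, $BD$ separately.

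The homogeneous pieces $E^0$ and $B^0$ come from the 2D Kirchhoff formula applied to Cauchy data built from $\mathring E$, $\mathring B$, their first derivatives, $j_{\mathring f}$, and $U(0)$, all of which are bounded (the initial data by hypothesis, $U(0)\in C_b^2$ because $U\in V$). The Kirchhoff kernel $(1-|y|^2)^{-1/2}$ on $B_1$ being integrable, one concludes $|E^0|+|B^0|\leq C$. For $ED$ and $BD$ the integrand depends only on $\partial_tU$, $\partial_x^2 U$, and the running integral $\int_0^\tau\partial_x^2 U\,ds$. Pulling the pointwise time bound $\|U(\tau)\|_{C_b^2}+\int_0^\tau\|U(s)\|_{C_b^2}\,ds$ outside the $y$-integration and invoking Remark \ref{fund} ii) with $l=0$ turns the remaining object into an $L^1$-in-time quantity; by the standing assumption $U\in V\hookrightarrow W^{1,1}(0,T;C_b^2)$, this is bounded by $C\|U\|_{W^{1,1}(0,T;C_b^2)}\leq C$.

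The essential work is then the $S$- and $T$-terms. Bounding $|f|\leq\|\mathring f\|_\infty$ reduces $ET$ and $BT$ to controlling
\begin{align*}
\int_0^t \int_{|x-y|<t-\tau} \int_{|p|\leq P(\tau)} \frac{|et|+|bt|}{(t-\tau)\sqrt{(t-\tau)^2-|x-y|^2}}\, dp\,dy\,d\tau,
\end{align*}
while $ES$ and $BS$ carry an extra factor $|K(\tau,y)|\leq\|E(\tau)\|_\infty+\|B(\tau)\|_\infty$ and the kernel $|\partial_p es|$ in place of $|et|/(t-\tau)$. The crucial step is to estimate, uniformly in $|\xi|\leq 1$, the $p$-integrals $\int_{|p|\leq P}|\partial_p es|\,dp$ and $\int_{|p|\leq P}|et|\,dp$, following the polar-coordinate analysis of Glassey and Schaeffer in \cite{rvm2}: the factor $1-|\hat p|^2=(1+|p|^2)^{-1}$ in $et$, respectively the structural cancellations in $\partial_p es$, compensate one power of the singular denominator $1+\hat p\cdot\xi$ so that essentially only a logarithm survives. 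Combining these with Remark \ref{fund} ii) (with $l=0$ for the $S$-terms and $l=1$ for the $T$-terms) and using monotonicity of $P$ produces the claimed $C\,P(t)\ln P(t)$; in the $S$-case the additional factor $|K|$ then yields the extra Gronwall term $C\int_0^t(\|E(\tau)\|_\infty+\|B(\tau)\|_\infty)\,d\tau$ after exchanging the order of integration so that $\|E(\tau)\|_\infty+\|B(\tau)\|_\infty$ appears under the $\tau$-integral, the remaining $\xi$- and $p$-kernels absorbing into the $P(t)\ln P(t)$ prefactor.

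The main obstacle is the two sharp $p$-integral estimates for $|\partial_p es|$ and $|et|$; these are the 2D analogues of the estimates underlying \cite{rvm2} and require a delicate polar decomposition in $p$ around the singular direction $\hat p\cdot\xi=-1$ with careful tracking of the $P$-dependence, in order to improve the naive bound of order $P^2$ coming from the volume of $B_P$ down to the sharp $P\ln P$.
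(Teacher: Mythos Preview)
Your treatment of $E^0$, $B^0$, $ED$, $BD$ is correct. The gap is in the $S$- and $T$-terms. The scheme you propose --- bound $|f|\leq\|\mathring f\|_\infty$ and then control $\int_{|p|\leq P}|et|\,dp$ (resp.\ $\int_{|p|\leq P}|\partial_p(es)|\,dp$) uniformly in $|\xi|\leq 1$ --- cannot produce $P\ln P$. For $|\xi|=1$ a direct polar computation, using the identity $|\xi+\widehat p|^2=(1-|\widehat p|\cos\theta)^2+|\widehat p|^2\sin^2\theta$ and hence $|\xi+\widehat p|\leq(1-|\widehat p|\cos\theta)+|\widehat p||\sin\theta|$, gives
\[
\int_0^{2\pi}|et|\,d\theta\;\leq\;2(1-|\widehat p|^2)\Bigl(\tfrac{2\pi}{\sqrt{1-|\widehat p|^2}}+\tfrac{4|\widehat p|}{1-|\widehat p|^2}\Bigr)\;\leq\;C,
\]
so that $\int_{|p|\leq P}|et|\,dp\leq CP^2$; the lower bound $|\xi+\widehat p|\geq|\widehat p||\sin\theta|$ shows this order is sharp. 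After the change $y=x+(t-\tau)\xi$ the $(t-\tau)$-weights disappear, and the outcome is $|ET|+|BT|\leq CP(t)^2$, not $CP(t)\ln P(t)$. Feeding $P^2$ into Lemma~\ref{Q} yields $P(t)\leq C+C\int_0^tP(s)^2\,ds$, a Riccati inequality that blows up in finite time, so the global continuation argument collapses.

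What the argument in \cite{rvm2} actually does --- and what the paper's proof explicitly invokes --- is the energy and cone-flux estimates of Lemma~\ref{rest}, in particular parts ii), iii), iv). These replace the pointwise bound on $f$ by integrated control of $f$ weighted by powers of $\sqrt{1+|p|^2}$; a splitting of the momentum integral together with these weighted bounds buys exactly the extra factor $1/P$ over the naive estimate, leaving $P\ln P$. Without Lemma~\ref{rest} the stated bound is out of reach.
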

\begin{proof}
The estimates on the $S$-and $T$-terms are derived in much the same way as in \cite[Sec. 2]{rvm2}. Note that the energy estimates of Lemma \ref{rest}, that had to be modified in our situation, are enough to carry out the the proofs therein.

The estimate on the $D$-terms is derived straightforwardly, as well as the estimate on $E^0$, $B^0$, the latter parts only containing terms of the initial data and $U\left(0\right)$.
\end{proof}
Now we can finally prove:
\begin{lemma}\label{Q}
The a-priori bound $P\left(t\right)\leq Q$ holds, where $Q$ only depends on $T$, the $C_b^1$-norms of the initial data, $\supp_p\mathring{f}$ (which basically coincides with $P\left(0\right)$), L, and $\left\|U\right\|_V$.
\end{lemma}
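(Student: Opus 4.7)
The plan is to close the loop between the bound on $P(t)$ coming from the characteristic ODE and the bound on the fields provided by the previous lemma, via two successive Gronwall arguments (the second one nonlinear because of the $P\ln P$ growth).

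First, set $M(t):=\sup_{0\leq\tau\leq t}\bigl(\|E(\tau)\|_\infty+\|B(\tau)\|_\infty\bigr)$. Since $P$ is monotonically increasing, the right-hand side of the bound from the preceding lemma (applied for each $\tau\leq t$) yields
\begin{align*}
M(t)\leq CP(t)\ln P(t)+C+C\int_0^t M(\tau)\,d\tau.
\end{align*}
A standard linear Gronwall in $M$, using that $P(\tau)\ln P(\tau)$ is monotone in $\tau$, then gives $M(t)\leq C\bigl(1+P(t)\ln P(t)\bigr)$, with $C$ depending only on $T$ and the data listed in the statement.

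Second, I turn to the characteristic flow of the Vlasov equation in \eqref{CVM}, whose momentum component satisfies $\dot P=E(s,X)-\widehat{P}^{\,\bot}B(s,X)$, so $|\dot P(s)|\leq \|E(s)\|_\infty+\|B(s)\|_\infty\leq M(s)$. Since $f(t,x,p)=\mathring f$ composed with the backward flow vanishes outside the image of $\supp\mathring f$ under forward characteristics, any $p$ in $\supp_pf(t,\cdot,\cdot)$ satisfies
\begin{align*}
|p|\leq P(0)+\int_0^t M(\tau)\,d\tau,
\end{align*}
so, taking the supremum and adding the fixed constant $3$ in the definition of $P$,
\begin{align*}
P(t)\leq C+C\int_0^t\bigl(1+P(\tau)\ln P(\tau)\bigr)\,d\tau.
\end{align*}

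Finally, this is a nonlinear (Bihari-type) Gronwall inequality with the super-linear but log-slow nonlinearity $s\mapsto s\ln s$. Denoting the right-hand side by $\Phi(t)$, we have $\dot\Phi(t)\leq C\bigl(1+\Phi(t)\ln\Phi(t)\bigr)$ (one may assume $\Phi\geq e$ without loss of generality), hence $\frac{d}{dt}\ln\Phi(t)\leq C\bigl(1+\ln\Phi(t)\bigr)$, and Gronwall on $\ln\Phi$ gives $\ln\Phi(t)\leq e^{CT}\bigl(1+\ln\Phi(0)\bigr)$ for $0\leq t<T$. Consequently $P(t)\leq \Phi(t)\leq Q$ with $Q$ depending only on the quantities listed in the statement, as desired.

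The main obstacle is the nonlinear growth $P\ln P$ in the field bound; any growth strictly faster than linear would preclude a global bound on $[0,T[$, and only the logarithmic nature of the extra factor allows Bihari's argument to close. The $L^1$-based energy estimates of Lemma~\ref{rest} — which absorb the external-current contribution into $\|U\|_V$ — are precisely what keeps the growth at this critical $P\ln P$ level in the preceding lemma.
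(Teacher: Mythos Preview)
Your proof is correct and follows essentially the same route as the paper: combine the field estimate with a linear Gronwall to get $\|E\|_\infty+\|B\|_\infty\leq C(1+P\ln P)$, feed this into the characteristic ODE to obtain $P(t)\leq C+C\int_0^t P(s)\ln P(s)\,ds$, and close with the Bihari-type argument. The paper merely states the last two steps and refers to \cite{rvm2}; your version spells them out in detail, including the log-Gronwall computation.
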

\begin{proof}
Collecting all bounds on the fields we arrive at
\begin{align*}
\left\|E\left(t\right)\right\|_\infty+\left\|B\left(t\right)\right\|_\infty\leq C+CP\left(t\right)\ln P\left(t\right)+C\int_0^t{\left(\left\|E\left(\tau\right)\right\|_\infty+\left\|B\left(\tau\right)\right\|_\infty\right)d\tau}.
\end{align*}
As in \cite{rvm2}, this is enough to show that
\begin{align*}
P\left(t\right)\leq C+C\int_0^t{P\left(s\right)\ln P\left(s\right)ds},
\end{align*}
from which the assertion follows immediately.
\end{proof}

\subsection{Existence of classical solutions}\label{loc}
\subsubsection{The iteration scheme}
In the following we want to construct a solution of \eqref{CVM}. We will only sketch the main ideas, since similar procedures have already been carried out in the literature, see for example \cite[Sec. V]{sf}.

We work with initial data $\mathring{f}\geq 0$ of class $C^2_c$, $\mathring{E}$, $\mathring{B}$ of class $C_b^3$, and control $U\in V$ that satisfy \eqref{CC}, i.e. $\divg\mathring{E}=\rho_{\mathring{f}}$. We have to approximate these functions, so let $\mathring{f}_k\rightarrow\mathring{f}$ in $C_b^2$, $\mathring{E}_k\rightarrow\mathring{E}$ and $\mathring{B}_k\rightarrow\mathring{B}$ in $C_b^3$ with $\mathring{f}_k\in C^\infty_c$, $\mathring{E}_k$, $\mathring{B}_k\in C^\infty$, and furthermore $U_k\rightarrow U$ in $V$ with $U_k\in C^\infty$ (note that $C^\infty$ is dense in $V$).

The strategy to obtain a solution of \eqref{CVM} is the following: By iteration we construct densities $f_k$ and fields $E_k$, $B_k$ in such a way that these functions will converge in a proper sense and that we may pass to the limit in \eqref{CVM}. However, it is more convenient to work with a modified system. As the previous section suggests, it is crucial to control the $p$-support of $f$. For this reason we first consider a cut-off system on $\left[0,T\right]$ where we modify the original Vlasov equation and use the second order Maxwell equations (\eqref{CC} and \eqref{LC} need not hold for the iterates):
\begin{align}\tag{$\alpha$VM}\label{aVM}\left.\begin{aligned}
\partial_t f+\widehat{p}\cdot\partial_x f+\alpha\left(p\right)\left(E-\widehat{p}^\bot B\right)\cdot\partial_p f &= 0,\\
\partial_t^2 E-\Delta E &= -\partial_t j_f-\partial_t U-\partial_x\rho_f+\partial_x\int_0^t{\divg_xU\,d\tau},\\
\partial_t^2 B-\Delta B &= \partial_{x_1}j_{f,2}-\partial_{x_2}j_{f,1}+\partial_{x_1}U_2-\partial_{x_2}U_1,\\
\left(f,E,B\right)\left(0\right) &= \left(\mathring{f},\mathring{E},\mathring{B}\right),\\
\partial_t E\left(0\right) &= \left(\partial_{x_2}\mathring{B},-\partial_{x_1}\mathring{B}\right)-j_{\mathring{f}}-U\left(0\right),\\
\partial_t B\left(0\right) &= -\partial_{x_1}\mathring{E}_2+\partial_{x_2}\mathring{E}_1.
\end{aligned}\right\}\end{align}
Here, let the cut-off function $\alpha$ be of class $C^\infty_c\left(\R^2\right)$ with $\alpha\left(p\right)=1$ for $\left| p\right|\leq 2Q$. The property of the constant $Q$ will imply that a solution of \eqref{aVM} is also a solution of \eqref{CVM}. 

We start the iteration with $f_0\left(t,x,p\right):=\mathring{f}_0\left(x,p\right)$, $E_0\left(t,x\right):=\mathring{E}_0\left(x\right)$, $B_0\left(t,x,p\right):=\mathring{B}_0\left(x\right)$. The induction hypothesis is that $f_k$, $E_k$, and $B_k$ are of class $C^\infty$ and that the fields are bounded. Given $f_{k-1}$, $E_{k-1}$, and $B_{k-1}$, we firstly define $f_k$ as the solution of
\begin{align*}
\partial_t f_k+\widehat{p}\cdot\partial_x f_k+\alpha\left(p\right)\left(E_{k-1}-\widehat{p}^\bot B_{k-1}\right)\cdot\partial_p f_k &= 0,\\
f_k\left(0\right) &= \mathring{f}_k,
\end{align*}
namely
\begin{align*}
f_k\left(t,x,p\right)=\mathring{f}_k\left(X_k\left(0,t,x,p\right),P_k\left(0,t,x,p\right)\right)
\end{align*}
with the characteristics defined by
\begin{align*}
\dot{X}_k&=\widehat{P}_k,&X_k\left(t,t,x,p\right)&=x,\\
\dot{P}_k&=\alpha\left(P_k\right)\left(E_{k-1}-\widehat{P}_k^\bot B_{k-1}\right)\left(s,X_k\right),&P_k\left(t,t,x,p\right)&=p.
\end{align*}
We conclude that $X_k$ and $P_k$ are of class $C^\infty$ in all four variables by the induction hypothesis. This yields that even $f_k\in C^\infty$. Since $\alpha$ is compactly supported the $p$-support of $f_k$ is controlled by a constant $C$. Hence, $\rho_{f_k}$ and $j_{f_k}$ are well defined as $C^\infty\cap C_b^1$-functions.
 
Secondly, we define $E_k$ and $B_k$ as the solution of
\begin{align*}
\partial_t^2 E_k-\Delta E_k &= -\partial_t j_{f_k}-\partial_t U_k-\partial_x\rho_{f_k}+\partial_x\int_0^t{\divg_xU_k\,d\tau},\\
\partial_t^2 B_k-\Delta B_k &= \partial_{x_1}j_{f_k,2}-\partial_{x_2}j_{f_k,1}+\partial_{x_1}U_{k,2}-\partial_{x_2}U_{k,1},\\
\left(E_k,B_k\right)\left(0\right) &= \left(\mathring{E}_k,\mathring{B}_k\right),\\
\partial_t E_k\left(0\right) &= \left(\partial_{x_2}\mathring{B}_k,-\partial_{x_1}\mathring{B}_k\right)-j_{\mathring{f}_k}-U_k\left(0\right),\\
\partial_t B_k\left(0\right) &= -\partial_{x_1}\mathring{E}_{k,2}+\partial_{x_2}\mathring{E}_{k,1}.
\end{align*}
Indeed, we can solve these wave equations by applying the solution formula for the wave equation. Since the right hand sides of the above equations are of class $C^\infty$ and bounded, so are also $E_k$ and $B_k$. Applying Lemmas \ref{fest}, \ref{field}, and \ref{fieldder} then shows that the iterates are bounded in $C_b^1$.

As for the second derivatives, we differentiate $\eqref{aVM}$ and have, for example,
\begin{align}\label{SD}\left.\begin{aligned}
\partial_t\partial_{x_i}f_k+\widehat{p}\cdot\partial_x\partial_{x_i}f_k+&\\
\alpha K_{k-1}\cdot\partial_p\partial_{x_i}f_k &= -\alpha\partial_{x_i}K_{k-1}\cdot\partial_pf_k,\\
\partial_t^2\partial_{x_i} E_k-\Delta\partial_{x_i} E_k &= -\partial_t j_{\partial_{x_i}f_k}-\partial_t\partial_{x_i} U_k-\partial_x\rho_{\partial_{x_i}f_k}+\partial_x\int_0^t{\divg_x\partial_{x_i}U_k\,d\tau},\\
\partial_t^2\partial_{x_i} B_k-\Delta\partial_{x_i} B_k &= \partial_{x_1}j_{\partial_{x_i}f_k,2}-\partial_{x_2}j_{\partial_{x_i}f_k,1}+\partial_{x_1}\partial_{x_i}U_{k,2}-\partial_{x_2}\partial_{x_i}U_{k,1},\\
\left(\partial_{x_i}f_k,\partial_{x_i}E_k,\partial_{x_i}B_k\right)\left(0\right) &= \left(\partial_{x_i}\mathring{f}_k,\partial_{x_i}\mathring{E}_k,\partial_{x_i}\mathring{B}_k\right),\\
\partial_t\partial_{x_i} E_k\left(0\right) &= \left(\partial_{x_2}\partial_{x_i}\mathring{B}_k,-\partial_{x_1}\partial_{x_i}\mathring{B}_k\right)-j_{\partial_{x_i}\mathring{f}_k}-\partial_{x_i}U_k\left(0\right),\\
\partial_t\partial_{x_i} B_k\left(0\right) &= -\partial_{x_1}\partial_{x_i}\mathring{E}_{k,2}+\partial_{x_2}\partial_{x_i}\mathring{E}_{k,1}
\end{aligned}\right\}\end{align}
and then apply the estimates of Lemmas \ref{field} and \ref{fieldder}. Note that for this we need four space derivatives in the definition of $V$ so that $\left\|\partial_xU_k\right\|_{W^{2,1}\left(0,T;C_b^3\right)}$ is bounded. Likewise, one proceeds with the other second order derivatives. Altogether, the iterates are bounded in $C_b^2$.

After that, considering the difference of the iterates of the $k$-th step and the $l$-th step, Lemmas \ref{fest}, \ref{field}, and \ref{fieldder} yield that the iteration sequences are even Cauchy sequences in $C_b^1$, so that they converge to some $\left(f,E,B\right)$ in the $C_b^1$-norm.

For later considerations it will be convenient that the density and the fields are even $C_b^2$. Since all second derivatives are bounded in $L^\infty\left(\left[0,T\right]\times\R^j\right)$ ($j=4$ or $2$ respectively) they converge, after extracting a suitable subsequence, in the weak-*-sense. Of course, these limits have to be the respective weak derivatives of $f$, $E$, and $B$. The remaining part is to show that the weak derivatives just obtained are in fact classical ones. For this sake, have a look at the representation formula for $\partial_{x_i}\partial_{x_j}B_k$; use system \eqref{SD} and Lemma \ref{fieldder}:
\begin{align*}
& \partial_{x_i}\partial_{x_j}B_k-\partial_{x_i}\overline{B}^0_k\\
&= \int_0^t{\int_{\left| x-y\right|<t-\tau}{\int{\frac{bt}{\left(t-\tau\right)\sqrt{\left(t-\tau\right)^2-\left| x-y\right|^2}}\partial_{x_i}\partial_{x_j}f_k\,dp}dy}d\tau}\\
&\phan+ \int_0^t{\int_{\left| x-y\right|<t-\tau}{\int{\frac{\left(\alpha\partial_p\left(bs\right)+bs\nabla\alpha\right)\cdot \partial_{x_j}f_k\partial_{x_i}K_{k-1}}{\sqrt{\left(t-\tau\right)^2-\left| x-y\right|^2}}\,dp}dy}d\tau}\\
&\phan+ \int_0^t{\int_{\left| x-y\right|<t-\tau}{\int{\frac{\left(\alpha\partial_p\left(bs\right)+bs\nabla\alpha\right)\cdot K_{k-1}\partial_{x_i}\partial_{x_j}f_k}{\sqrt{\left(t-\tau\right)^2-\left| x-y\right|^2}}\,dp}dy}d\tau}\\
&\phan- \int_0^t{\int_{\left| x-y\right|<t-\tau}{\int{\frac{\left(bs\right)\alpha\partial_{x_i}\partial_{x_j}K_{k-1}\cdot\partial_pf_k}{\sqrt{\left(t-\tau\right)^2-\left| x-y\right|^2}}\,dp}dy}d\tau}\\
&\phan- \int_0^t{\int_{\left| x-y\right|<t-\tau}{\int{\frac{\left(bs\right)\alpha\partial_{x_j}K_{k-1}\cdot\partial_{x_i}\partial_pf_k}{\sqrt{\left(t-\tau\right)^2-\left| x-y\right|^2}}\,dp}dy}d\tau}\\
&\phan+ \frac{1}{2\pi}\int_0^t{\int_{\left| x-y\right|<t-\tau}{\frac{\partial_{x_1}\partial_{x_i}U_{k,2}-\partial_{x_2}\partial_{x_i}U_{k,2}}{\sqrt{\left(t-\tau\right)^2-\left| x-y\right|^2}}\,dy}d\tau}.
\end{align*}
Here, $\overline{B}^0_k$ is the '$B^0$' of system \eqref{SD} and converges to the respective expression without indices.

We are allowed to pass to the limit in the integral expressions because all kernels are integrable, $\left(f_k,E_k,B_k\right)$ converge in $C_b^1$, the second derivatives weak-* in $L^\infty$, and $U_k$ in $V$. Hence we can omit the indices in the equation above or equivalently
\begin{align*}
& \partial_{x_i}\partial_{x_j}B-\partial_{x_i}\overline{B}^0\\
&= \int_0^t{\int_{\left| x-y\right|<t-\tau}{\int{\frac{bt}{\left(t-\tau\right)\sqrt{\left(t-\tau\right)^2-\left| x-y\right|^2}}\partial_{x_i}\partial_{x_j}f\,dp}dy}d\tau}\\
&\phan+ \int_0^t{\int_{\left| x-y\right|<t-\tau}{\int{\frac{\left(\alpha\partial_p\left(bs\right)+bs\nabla\alpha\right)\cdot \partial_{x_i}\left(K\partial_{x_j}f\right)}{\sqrt{\left(t-\tau\right)^2-\left| x-y\right|^2}}\,dp}dy}d\tau}\\
&\phan- \int_0^t{\int_{\left| x-y\right|<t-\tau}{\int{\frac{\left(bs\right)\alpha\partial_{x_i}\left(\partial_{x_j}K\cdot\partial_pf\right)}{\sqrt{\left(t-\tau\right)^2-\left| x-y\right|^2}}\,dp}dy}d\tau}\\
&\phan+ \frac{1}{2\pi}\int_0^t{\int_{\left| x-y\right|<t-\tau}{\frac{\partial_{x_1}U_2-\partial_{x_2}U_1}{\sqrt{\left(t-\tau\right)^2-\left| x-y\right|^2}}\,dy}d\tau}
\end{align*}
and conclude that $\partial_{x_i}\partial_{x_j}B$ is continuous which is an immediate consequence of $U\in V$ and the following lemma:
\begin{lemma}\label{intcont}
Denote $M:=\left\{\left(s,z\right)\in\left[0,T\right]\times\R^n\mid 0\leq s\leq T,\,\left| z\right|<s\right\}$ and let\linebreak $h\in C\left(\left[0,T\right]\times\R^{n+m}\right)$ with uniform support in $p\in\R^m$, i.e. $\supp_ph\subset B_r$ for some $r>0$, and let $w\in C^1\left(M\times B_r\right)$ and $\gamma\in\left\{t,x_1,\dots x_n\right\}$. Furthermore let one of the following options hold:
\begin{enumerate}[i)]
\item $h\in W^{1,\infty}\left(\left[0,T\right]\times\R^{n+m}\right)$ and $w\in L^1\left(M\times B_r\right)$,
\item $h\in W^{1,1}\left(0,T;L^\infty\left(\R^{n+m}\right)\right)$ if $\gamma=t$ or $h\in L^\infty\left(0,T;W^{1,\infty}\left(\R^{n+m}\right)\right)$ if $\gamma=x_i$ respectively, and
\begin{align*}
\int_{s-d<\left| z\right|<s}{\int_{B_r}{\left| w\left(s,z,p\right)\right| dp}dz}\rightarrow 0
\end{align*}
for $d\rightarrow 0$ uniformly in $s\in\left[0,T\right]$.
\end{enumerate}
Then
\begin{align*}
H\left(t,x\right) &:= \int_0^t{\int_{\left| x-y\right|<t-\tau}{\int{\left(\partial_\gamma h\right)\left(\tau,y,p\right)w\left(t-\tau,y-x,p\right)dp}dy}d\tau}\\
&= \int_0^t{\int_{\left| z\right|<s}{\int{\left(\partial_\gamma h\right)\left(t-s,x+z,p\right)w\left(s,z,p\right)dp}dz}ds}
\end{align*}
is continuous in $\left(t,x\right)\in\left[0,T\right]\times\R^n$.
\end{lemma}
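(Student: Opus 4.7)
The plan is to prove continuity of $H$ by estimating $H(t_n,x_n)-H(t,x)$ for arbitrary sequences $(t_n,x_n)\to(t,x)$, working throughout with the second representation of $H$, in which the $(t,x)$-dependence enters only through the upper limit $t$ of the outer integration and through the shifted argument $(t-s,x+z,p)$ of $\partial_\gamma h$. In both options the strategy is to treat $\partial_\gamma h$ as a bounded multiplier (pointwise in $\tau$ if necessary) and to reduce continuity to the continuity of translations in $L^1$ applied to $w$, or to a suitable truncation of $w$.

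For option (i), where $\left\|\partial_\gamma h\right\|_\infty<\infty$ and $w\in L^1\left(M\times B_r\right)$, I would split $H(t_n,x_n)-H(t,x)$ into a piece arising from the symmetric difference of the $s$-intervals $[0,t_n]$ and $[0,t]$, bounded by $\left\|\partial_\gamma h\right\|_\infty\int_{\min(t,t_n)}^{\max(t,t_n)}\int\int\left|w\right|dp\,dz\,ds\to 0$, and a piece arising from the translation by $(t-t_n,x-x_n)$ inside the argument of $\partial_\gamma h$. For the second piece I would change the integration variables so as to transfer the translation from $\partial_\gamma h$ onto $w$; the resulting bound by $\left\|\partial_\gamma h\right\|_\infty$ times the $L^1$-norm of $W-W(\cdot-(t_n-t),\cdot-(x_n-x),\cdot)$, where $W:=\chi_{\left|z\right|<s}w\in L^1([0,T]\times\R^n\times B_r)$, vanishes by continuity of translations in $L^1$.

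For option (ii), $w$ need no longer lie in $L^1(M\times B_r)$, so I exploit the shell condition by means of a decomposition $w=w_1^d+w_2^d$ with $w_1^d:=w\,\chi_{\left|z\right|<s-d}$ and $w_2^d:=w\,\chi_{s-d\leq\left|z\right|<s}$ for a small parameter $d>0$. Since $w_1^d$ is continuous on the compact set $\{(s,z):d\leq s\leq T,\ \left|z\right|\leq s-d\}\times B_r$ and vanishes outside of it, it lies in $L^1\cap L^\infty$, and the corresponding contribution to $H$ is continuous by repeating the option (i) argument, with the modification that in the subcase $\gamma=t$ the uniform $L^\infty$-bound on $\partial_\gamma h$ is replaced by the pointwise-in-$\tau$ bound $\left\|\partial_th(\tau,\cdot)\right\|_\infty$ and dominated convergence is applied in the variable $\tau$ with this $L^1$-integrable dominator. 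The $w_2^d$-contribution is controlled uniformly in $n$ by the shell quantity $\sup_s\int_{s-d<\left|z\right|<s}\int\left|w\right|dp\,dz$ multiplied by $\left\|\partial_\gamma h\right\|_\infty$ or $\left\|\partial_th\right\|_{L^1(0,T;L^\infty)}$ according to the subcase, and hence tends to zero as $d\to 0$. Given $\eta>0$, I would first choose $d$ so small that the $w_2^d$-part of $\left|H(t_n,x_n)-H(t,x)\right|$ is at most $\eta$ uniformly in $n$, and then, with $d$ fixed, choose $n$ large so that the $w_1^d$-part is also at most $\eta$.

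The step I expect to be hardest is the $\gamma=t$ subcase of option (ii): here $\partial_th$ is only $L^1$ in $\tau$ with values in $L^\infty$, so no uniform $L^\infty$-bound can be pulled out of the $s$-integral. The correct order — first using the pointwise-in-$\tau$ $L^\infty$-bound on $\partial_th$, then invoking dominated convergence in $\tau$ with the $L^1$-integrable dominator $\left\|\partial_th(\tau,\cdot)\right\|_\infty$, all while keeping the shell decomposition of $w$ uniform in $s$ — has to be orchestrated carefully so that the bounds produced by the splitting of $w$ and by the translations in $(t,x)$ are compatible and do not circularly depend on each other.
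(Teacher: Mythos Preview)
Your approach is correct, and it differs genuinely from the paper's. Both arguments use the same shell/interior decomposition: the paper writes $H=I_d+J_d$ with $I_d$ the integral over $s-d<\left|z\right|<s$ and $J_d$ the integral over $\left|z\right|<s-d$, which is exactly your $w_2^d$/$w_1^d$ split, and $I_d$ is handled in the paper precisely as you propose. The difference lies in $J_d$. The paper \emph{integrates by parts} in the variable $\gamma$, moving $\partial_\gamma$ from $h$ onto $w$ (plus boundary terms on $\left|z\right|=s-d$ and, for $\gamma=t$, on $s=t$). This produces an integrand involving only $h$ and $\partial_{z_i}w$ (respectively $\partial_s w$) on a region bounded away from the light cone $\left|z\right|=s$; since $h$ is continuous and $w\in C^1$ there, continuity of $J_d$ follows from the classical theorem on parameter integrals with continuous integrand. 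The hypothesis $w\in C^1$ is used only for this step.

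You never differentiate $w$. Instead you push the $(t_n-t,x_n-x)$-translation from $\partial_\gamma h$ onto $W$ (or $W_1^d$) by a change of variables and invoke continuity of translations in $L^1$; in option (ii) you use that $W_1^d$ is bounded with compact support, hence $L^1$. This is a valid alternative that actually uses less regularity of $w$ (continuity would already do). The price is the bookkeeping you flag in the $\gamma=t$ subcase: you must run dominated convergence in $\tau$ with majorant $\left\|\partial_t h(\tau,\cdot)\right\|_\infty$ after shifting the translation onto $W_1^d$. The paper sidesteps this because after its integration by parts only $h$ itself appears, and $h$ is continuous, so a single parameter-integral argument covers both the $\gamma=x_i$ and $\gamma=t$ cases uniformly. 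Your route trades that uniformity for a weaker hypothesis on $w$; both reach the goal.
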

\begin{proof}
Let $\gamma=x_i$ and $\epsilon>0$ be given. For $\left(t,x\right)\in\left[0,T\right]\times\R^n$ and $d>0$ define
\begin{align*}
I_d\left(t,x\right):=\int_0^t{\int_{s-d<\left| z\right|<s}{\int{\left(\partial_{x_i}h\right)\left(t-s,x+z,p\right)w\left(s,z,p\right)dp}dz}ds}
\end{align*}
and estimate in case i)
\begin{align*}
\left| I_d\left(t,x\right)\right|\leq\left\|\partial_{x_i}h\right\|_\infty\int_0^T{\int_{s-d<\left| z\right|<s}{\int_{B_r}{\left| w\left(s,z,p\right)\right| dp}dz}ds}\rightarrow0
\end{align*}
and in case ii)
\begin{align*}
\left| I_d\left(t,x\right)\right|\leq\int_0^T{\left\|\partial_{x_i}h\left(s\right)\right\|_\infty ds}\left\|s\mapsto\int_{s-d<\left| z\right|<s}{\int_{B_r}{\left| w\left(s,z,p\right)\right| dp}dz}\right\|_\infty\rightarrow0
\end{align*}
for $d\rightarrow0$ uniformly in $\left(t,x\right)$. Thus we can choose $d$ so that $\left| I_d\left(t,x\right)\right|<\frac{\epsilon}{4}$ for all $\left(t,x\right)$. For now fixed $d$ consider the remaining integral and integrate by parts
\begin{align*}
J_d\left(t,x\right) &:= \int_0^t{\int_{\left| z\right|<s-d}{\int{\left(\partial_{x_i}h\right)\left(t-s,x+z,p\right)w\left(s,z,p\right)dp}dz}ds}\\
&= \int_0^t{\int_{\left| z\right|<s-d}{\int{\left(\partial_{z_i}h\right)\left(t-s,x+z,p\right)w\left(s,z,p\right)dp}dz}ds}\\
&= -\int_0^t{\int_{\left| z\right|<s-d}{\int{h\left(t-s,x+z,p\right)\partial_{z_i}w\left(s,z,p\right)dp}dz}ds}\\
&\phan+ \int_0^t{\int_{\left| z\right|=s-d}{\int{h\left(t-s,x+z,p\right)w\left(s,z,p\right)\frac{1}{\sqrt{2}}\,dp}dS_z}ds}\\
&\phan+ \int_{\left| z\right|<t-d}{\int{h\left(0,x+z,p\right)w\left(t,z,p\right)dp}dz}.
\end{align*}
This is allowed because the integration domain is away from the possibly singular set $\left| z\right|=s$. For that very reason $J_d$ is obviously continuous by the standard theorem for parameter integrals, so if $\left(\delta t,\delta x\right)$ is small enough (with $t+\delta t\in\left[0,T\right]$) we have
\begin{align*}
\left| J_d\left(t+\delta t,x+\delta x\right)-J_d\left(t,x\right)\right|<\frac{\epsilon}{2}.
\end{align*}
Finally with $H=I_d+J_d$ we conclude
\begin{align*}
& \left| H\left(t+\delta t,x+\delta x\right)-H\left(t,x\right)\right|\\
&\leq\left| I_d\left(t+\delta t,x+\delta x\right)\right|+\left| I_d\left(t,x\right)\right|+\left| J_d\left(t+\delta t,x+\delta x\right)-J_d\left(t,x\right)\right|<\epsilon.
\end{align*}
Analogously, one proves the assertion for $\gamma=t$.
\end{proof}
This lemma is applicable since $f$ has uniform support in $p$, $\partial_xf$, $\partial_pf$, and $\partial_xK$ are of class $W^{1,\infty}$, $\left| bs\right|$, $\left| bt\right|\leq C\left(r\right)$, and by Remark \ref{fund}. Next, we have a representation formula for $\partial_t\partial_{x_j}B_k$ according to Lemma \ref{fieldder}. Analogously we conclude that $\partial_t\partial_{x_j}B$ is continuous. For this, note that the terms without an $\int_0^t$-integral are easy to handle since there only initial values appear.

The procedure for $E$ is nearly the same. The only critical point is to ensure that
\begin{align*}
\int_0^t{\int_{\left| x-y\right|<t-\tau}{\frac{\partial_t^2\partial_{x_j}U}{\sqrt{\left(t-\tau\right)^2-\left| x-y\right|^2}}\,dy}d\tau}
\end{align*}
is continuous for $U\in V$. To this end, we can apply Lemma \ref{intcont} with $h=\partial_t\partial_{x_j}U\chi$ where $\chi=\chi\left(p\right)\in C_c^\infty\left(\R^2\right)$ with $\int{\chi\,dp}=1$. Note that $\partial_t\partial_{x_j}U$ is continuous and of class $W^{1,1}\left(0,T;L^\infty\right)$ by $U\in V$, and that
\begin{align*}
\int_{s-d<\left| z\right|<s}{\frac{1}{\sqrt{s^2-\left| z\right|^2}}\,dz}=2\pi\sqrt{2sd-d^2}1_{s\geq d}\leq 2\pi\sqrt{T}\sqrt{d},
\end{align*}
where $1_{s\geq d}$ denotes the indicator function of the set $\left\{s\mid s\geq d\right\}$. So there only remain the $\partial_t^2$-derivatives of $E$ and $B$. By the known convergence, we can pass to the limit in \eqref{aVM} so that the Vlasov equation holds everywhere and the Maxwell equations almost everywhere. With this knowledge and the just proven fact that the second space derivatives of the fields are continuous, we conclude that also the $\partial_t^2$-derivatives are continuous.

Now the fact that all weak derivatives are continuous instantly implies that they are classical ones. Therefore the fields are of class $C^2$. Thus the characteristics
\begin{align*}
\dot{X}=\widehat{P},\,\dot{P}=\alpha\left(P\right)\left(E-\widehat{P}^\bot B\right)\left(s,X\right),\,\left(X,P\right)\left(t,t,x,p\right)=\left(x,p\right)
\end{align*}
are well defined and of class $C^2$ in $\left(t,x,p\right)$. Hence
\begin{align*}
f\left(t,x,p\right)=\mathring{f}\left(\left(X,P\right)\left(0,t,x,p\right)\right)
\end{align*}
is also of class $C^2$.

Therefore, we are able to pass to the limit in \eqref{aVM}, but actually \eqref{CVM} is to be solved: Obviously, \eqref{aVM} coincides with \eqref{CVM2nd} as long as $f$ vanishes for $\left| p\right|\geq Q$. But this property is guaranteed by Lemma \ref{Q}. Therefore $\left(f,E,B\right)$ is a solution of \eqref{CVM2nd} and hence of \eqref{CVM} by equivalence.

We collect some properties of $\left(f,E,B\right)$:
\begin{theorem}\label{prop}
There is a solution $\left(f,E,B\right)$ of \eqref{CVM} with:
\begin{enumerate}[i)]
	\item $f$, $E$, and $B$ are of class $C^2$,
	\item $f$ vanishes for $\left| p\right|\geq Q$ or $\left| x\right|\geq R+T$ (where $Q$ only depends on $T$, the initial data (their $C_b^1$-norms and $P\left(0\right)$), and $\left\|U\right\|_V$, and where $\supp_x\mathring{f}\subset B_R$),
	\item $E$, $B$ vanish for $\left| x\right|\geq\widetilde{R}+L+R+T$ if their initial data are compactly supported, i.e. $\supp\,\mathring{E}$, $\supp\,\mathring{B}\subset B_{\widetilde{R}}$,
	\item the $C_b^2$-norms of the solution are estimated by a constant only depending on $T$, the initial data (their $C_b^2$-norms and $P\left(0\right)$), $L$, and $\left\|U\right\|_V$.
\end{enumerate}
\end{theorem}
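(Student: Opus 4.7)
My plan is to harvest each item from the construction already sketched in the preceding pages; the theorem is essentially a consolidation, and the only non-routine ingredient is item iii) (support of the fields), which needs a separate finite-propagation-speed argument.

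First, for item i), the $C^2$-regularity of $(f,E,B)$ was in fact already established in the construction: the iterates $(f_k,E_k,B_k)$ are $C_b^2$ by induction (using Lemmas \ref{fest}, \ref{field} and \ref{fieldder} applied to the iteration), they converge in $C_b^1$ and their second derivatives converge weak-$*$ in $L^\infty$; by passing to the limit in the representation formulas for $\partial_{x_i}\partial_{x_j}B$, $\partial_t\partial_{x_j}B$, etc., Lemma \ref{intcont} then gives continuity of each second derivative of the limit. Once $E$ and $B$ are $C^2$ the characteristics are $C^2$, so $f(t,x,p)=\mathring{f}((X,P)(0,t,x,p))$ is $C^2$. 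I would just state this and refer back.

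For item ii), the $p$-support bound is precisely the content of Lemma \ref{Q}: the limit $(f,E,B)$ solves \eqref{CVM2nd} (equivalently \eqref{CVM}) on the set where $|p|<2Q$, so on a first open subinterval of $[0,T]$ it coincides with the untruncated system and Lemma \ref{Q} forces $P(t)\le Q$; since $\alpha\equiv 1$ on $|p|\le 2Q$ the two systems continue to agree, and a standard continuity argument closes the bootstrap on all of $[0,T]$. The $x$-support is immediate from $|\dot X|=|\widehat P|\le 1$: if $\mathring{f}(y,p)=0$ for $|y|\ge R$, then along characteristics $|X(0,t,x,p)|\ge|x|-t$, so $f(t,x,p)=0$ for $|x|\ge R+t\le R+T$.

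For item iii), which is the item requiring a genuine argument, I would use the finite-propagation-speed property of the 2D wave equation together with the compact support of the sources. The source terms of the wave equations in \eqref{CVM2nd} are $\partial_t j_f+\partial_t U+\partial_x\rho_f-\partial_x\int_0^t\divg_x U\,d\tau$ and $\partial_{x_1}j_{f,2}-\partial_{x_2}j_{f,1}+\partial_{x_1}U_2-\partial_{x_2}U_1$. By item ii) and $U\in V$, these source terms vanish for $|x|\ge L+R+T$; the initial data $\mathring{E},\mathring{B}$ vanish for $|x|\ge\widetilde{R}$, and $\partial_t E(0)$, $\partial_t B(0)$ are built from $\mathring{E},\mathring{B},\mathring{f},U(0)$, hence vanish for $|x|\ge\max\{\widetilde{R},L,R\}$. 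Using the explicit 2D wave-equation representation stated in the subsection on Maxwell equations, the solution at $(t,x)$ depends only on values of the source in the backward cone $\{|x-y|<t-\tau\}$ and of the data in $B_t(x)$; hence if $|x|\ge\widetilde{R}+L+R+T$ then the backward cone lies outside the support of both data and sources, yielding $E(t,x)=B(t,x)=0$.

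For item iv), I would simply plug the a priori bounds already in hand into the estimates of Lemmas \ref{fest}, \ref{field}, and \ref{fieldder}, differentiated once as in system \eqref{SD} for the second-derivative bound. All quantities appearing on the right-hand sides ($\|K\|_\infty$, $\|\partial_{x,p}f\|_\infty$, $\|U\|_{W^{2,1}(0,T;C_b^3)}$, support radii) are controlled by $T$, the $C_b^2$-norms of the data, $P(0)$, $L$, and $\|U\|_V$ thanks to items i)--iii) and Lemma \ref{Q}. The only mildly delicate point is that Lemma \ref{fieldder} contains a logarithmic term in $\|\partial_{x,p}f\|$, which closes via a Gronwall argument exactly as in the proof of Lemma \ref{Q}; I expect this to be the one place where some care is needed but no new idea beyond what was used for the a priori $P(t)\le Q$ estimate.
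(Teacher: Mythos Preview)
Your proposal is correct and follows essentially the same route as the paper. The paper's own proof is a three-line remark: it refers back to the construction for i), invokes $|\dot X|\le 1$ for the $x$-support in ii), says ``recall the representation formula of the fields'' for iii), and for iv) observes that the $C_b^2$-bounds hold for all iterates and survive the passage to the limit (convergence in $C_b^1$, weak-$*$ for second derivatives).

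The only cosmetic difference is in iii): you re-prove the finite-propagation-speed support statement from the raw 2D wave-equation formula, whereas the paper simply quotes the support assertion already built into Lemma~\ref{field} (its last sentence). Your argument is the content of that lemma's proof, so nothing is gained or lost. For iv) you propose applying the estimates directly to the limit solution rather than to the iterates; this also works, but the paper's one-liner (bounds hold uniformly for the iterates, hence for the weak-$*$/$C_b^1$ limits) is shorter and avoids re-running the Gronwall closure you mention.
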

\begin{proof}
For ii) note that $\left|\dot{X}\right|\leq 1$, for iii) recall the representation formula of the fields, and iv) holds because it holds for all iterates, they converge in $C_b^1$ and their second derivatives weakly-* in $L^\infty$.
\end{proof}
\subsubsection{Uniqueness}
We prove uniqueness of the solution.
\begin{theorem}
The obtained solution $\left(f,E,B\right)$ of \eqref{CVM} is unique in $C^1\times\left(C^2\right)^2$.
\end{theorem}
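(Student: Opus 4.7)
The plan is to derive a closed Gronwall inequality for the difference of two $C^1\times(C^2)^2$ solutions $(f_i,E_i,B_i)$, $i=1,2$, of \eqref{CVM} sharing the common data $(\mathring f,\mathring E,\mathring B)$ and control $U$. By Lemma \ref{Q}, the $p$-supports of both $f_i$ lie in a common $B_Q$, and $|\dot X|\leq 1$ bounds their $x$-support in a common $B_{R+T}$. Consequently $\partial_x f_i$, $\partial_p f_i$, $E_i$, $B_i$ and their relevant first derivatives are uniformly bounded on the pertinent compact set by some $M>0$; in what follows $C$ denotes a generic constant depending only on $T$, $Q$, $M$, and a fixed cut-off $\alpha\in C_c^\infty(\R^2)$ with $\alpha\equiv 1$ on $B_Q$.

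Setting $f:=f_1-f_2$, $E:=E_1-E_2$, $B:=B_1-B_2$, the Vlasov equations subtract to
\begin{align*}
\partial_t f+\widehat p\cdot\partial_x f+\alpha(p)(E_2-\widehat p^\bot B_2)\cdot\partial_p f=-(E-\widehat p^\bot B)\cdot\partial_p f_1=:g,
\end{align*}
with $f(0)=0$; the inserted $\alpha$ is invisible on the supports of the $f_i$. Lemma \ref{fest}(i) in the \eqref{GVM} framework yields $\|f(t)\|_\infty\leq C\int_0^t(\|E(\tau)\|_\infty+\|B(\tau)\|_\infty)\,d\tau$. On the field side, subtracting the second order equations of \eqref{CVM2nd} annihilates both the initial data and the $U$-contributions, leaving an instance of \eqref{GVM} for $(f,E,B)$ with $d=0$, zero Cauchy data, convective coefficient $K:=E_2-\widehat p^\bot B_2$, and source $g$ as above. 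Since $f$ has uniform compact $p$-support in $B_Q$ by Lemma \ref{Q}, Lemma \ref{field} supplies both the representation and the companion estimate
\begin{align*}
\|E(t)\|_\infty+\|B(t)\|_\infty\leq C\int_0^t((1+\|K(\tau)\|_\infty)\|f(\tau)\|_\infty+\|g(\tau)\|_\infty)\,d\tau\leq C\int_0^t(\|f\|+\|E\|+\|B\|)(\tau)\,d\tau.
\end{align*}

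Summing the two bounds and writing $D(t):=\|f(t)\|_\infty+\|E(t)\|_\infty+\|B(t)\|_\infty$ yields $D(t)\leq C\int_0^t D(\tau)\,d\tau$ on $[0,T]$, so Gronwall's lemma forces $D\equiv 0$, and uniqueness follows. The main subtlety I foresee is the careful verification that the difference system fits the \eqref{GVM} framework cleanly: the uniform compact $p$-support supplied by Lemma \ref{Q} is essential for the estimate of Lemma \ref{field} to survive (Remark \ref{remcomp} warns that without uniformity only the representation remains), and the convective coefficient $K$ must inherit $C^1$-regularity from the $C^2$-hypothesis on $E_2,B_2$. Once these structural matches are in place, the Gronwall closure is routine.
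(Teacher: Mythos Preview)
Your argument is correct and matches the paper's own proof, which is just the one-line sketch ``consider the difference of two solutions and apply Lemmas~\ref{fest} and~\ref{field} to show that the difference vanishes after a Gronwall argument.'' You have supplied precisely the details the paper omits: the common $p$-support bound from Lemma~\ref{Q}, the insertion of the cut-off $\alpha$ to fit the \eqref{GVM} template literally, and the observation that the subtracted second-order Maxwell system has vanishing data and $d=0$.
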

\begin{proof}
The proof is standard. Consider the difference of two solutions and apply Lemmas \ref{fest} and \ref{field} to show that the difference vanishes after a Gronwall argument.
\end{proof}
Moreover, it is possible to show that the solution is unique in an even larger class. Here, the constructed solution satisfies the conditions if $\mathring{E}$ and $\mathring{B}$ are compactly supported.
\begin{theorem}\label{uniq}
A solution $\left(f,E,B\right)$ of \eqref{CVM} with the properties
\begin{enumerate}[i)]
	\item $f$, $E$, and $B$ are of class $W^{1,\infty}\cap H^1$,
	\item $\supp f\subset\left[0,T\right]\times B_r^2$ for some $r>0$,
\end{enumerate}
is unique (here, 'solution' means that \eqref{CVM} holds pointwise almost everywhere).
\end{theorem}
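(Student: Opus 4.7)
\emph{Plan.} Let $(f_i, E_i, B_i)$, $i=1,2$, be two solutions of \eqref{CVM} in the class above, set $\tilde f := f_1-f_2$, $\tilde E := E_1-E_2$, $\tilde B := B_1-B_2$, and note $(\tilde f, \tilde E, \tilde B)(0) = 0$. Subtracting the Vlasov equations yields
\begin{align*}
\partial_t\tilde f + \widehat p\cdot\partial_x\tilde f + \left(E_1-\widehat p^\bot B_1\right)\cdot\partial_p\tilde f = -\left(\tilde E - \widehat p^\bot\tilde B\right)\cdot\partial_p f_2,
\end{align*}
while subtracting the first-order Maxwell equations produces the analogous linear Maxwell system for $(\tilde E,\tilde B)$ with source $-j_{\tilde f}$ (the $U$-terms cancel). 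My strategy is standard $L^2$ energy estimates for $\tilde f$ and for the Maxwell pair, closed by a Gronwall argument; crucially, I would \emph{not} try to mimic the representation-formula approach of the $C^1\times (C^2)^2$ uniqueness result, since pointwise control of the fields is too much to ask under the weak hypotheses.

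For the Vlasov part, I would multiply by $\tilde f$ and integrate over $(x,p)$. The transport field $(\widehat p, E_1-\widehat p^\bot B_1)$ is divergence-free in $(x,p)$ (using $\operatorname{div}_p\widehat p^\bot = 0$), and $\tilde f$ has compact $(x,p)$-support by assumption ii), so the transport term contributes nothing. Using $\partial_p f_2\in L^\infty$ with support bounded in $(x,p)$, Cauchy-Schwarz gives
\begin{align*}
\tfrac{d}{dt}\|\tilde f(t)\|_{L^2}^2 \leq C\|\tilde f(t)\|_{L^2}\bigl(\|\tilde E(t)\|_{L^2(x)} + \|\tilde B(t)\|_{L^2(x)}\bigr).
\end{align*}
For the Maxwell part, multiplying by $(\tilde E_1,\tilde E_2,\tilde B)$ and integrating over $x$ cancels the curl-type cross terms by integration by parts (legal since all fields lie in $H^1$), leaving
\begin{align*}
\tfrac{1}{2}\tfrac{d}{dt}\int(|\tilde E|^2+\tilde B^2)\,dx = -\int\tilde E\cdot j_{\tilde f}\,dx.
\end{align*}
The compact $p$-support of $\tilde f$ combined with Cauchy-Schwarz yields $\|j_{\tilde f}\|_{L^2(x)}\leq C\|\tilde f\|_{L^2(x,p)}$, so the right-hand side is bounded by $C\|\tilde E\|_{L^2}\|\tilde f\|_{L^2}$. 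Setting $W(t):=\|\tilde f(t)\|_{L^2}^2+\|\tilde E(t)\|_{L^2}^2+\|\tilde B(t)\|_{L^2}^2$, AM--GM gives $\dot W\leq CW$, and $W(0)=0$ forces $W\equiv 0$ by Gronwall; hence $\tilde f, \tilde E, \tilde B$ vanish a.e.

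The main obstacle I expect is justifying the formal energy identities under only $W^{1,\infty}\cap H^1$ regularity. For the Vlasov identity this is essentially free: since $\tilde f\in W^{1,\infty}$ has compact $(x,p)$-support, $t\mapsto\|\tilde f(t)\|_{L^2}^2$ is absolutely continuous and the boundary-free integration by parts in $(x,p)$ is unambiguous. For the Maxwell identity, the curl-cancellation and the differentiation under the integral are handled by approximating $\tilde E,\tilde B$ in $H^1$ by $C_c^\infty$ fields, performing the identity for the approximation, and passing to the limit using that both sides depend continuously on $(\tilde E,\tilde B)$ in the $H^1$-topology; compactness of $\operatorname{supp}\tilde f$ in $p$ keeps $j_{\tilde f}\in L^2(x)$ uniformly in $t$, which is what makes this passage to the limit work. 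Once these identities are secured, the Gronwall closure is immediate.
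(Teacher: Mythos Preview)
Your proposal is correct and follows essentially the same route as the paper: $L^2$ energy identities for the difference in the Vlasov and Maxwell parts, exploiting the divergence-free transport field and the $H^1$ regularity to drop boundary terms, then closing by Gronwall. The only cosmetic difference is that the paper works with the time-integrated identities and applies a ``quadratic Gronwall'' inequality to each block separately before substituting, whereas you bundle everything into a single $W(t)$ and apply AM--GM plus standard Gronwall; both lead to the same conclusion.
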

\begin{proof}
Let $\left(\widetilde{f},\widetilde{E},\widetilde{B}\right)$ (with the above properties) solve \eqref{CVM} too and define $\overline{f}:=\widetilde{f}-f$ and so on. Then we have the system
\begin{align*}
\partial_t\overline{f}+\widehat{p}\cdot\partial_x\overline{f}+\left(\widetilde{E}-\widehat{p}^\bot\widetilde{B}\right)\cdot\partial_p\overline{f} &= -\left(\overline{E}-\widehat{p}^\bot\overline{B}\right)\cdot\partial_pf,\\
\partial_t\overline{E}_1-\partial_{x_2}\overline{B} &= -j_{\overline{f},1},\\
\partial_t\overline{E}_2+\partial_{x_1}\overline{B} &= -j_{\overline{f},2},\\
\partial_t\overline{B}+\partial_{x_1}\overline{E}_2-\partial_{x_2}\overline{E}_1 &= 0,\\
\left(\overline{f},\overline{E},\overline{B}\right)\left(0\right) &= 0.
\end{align*}
Note that initial values make sense because of $H^1\subset H^1\left(0,T;L^2\right)\hookrightarrow C\left(0,T;L^2\right)$. We have
\begin{align*}
& \frac{1}{2}\left\|\overline{f}\left(t\right)\right\|^2_{L^2}\\
&= \int_0^t{\int{\int{\overline{f}\partial_t\overline{f}\,dp}dx}d\tau}\\
&= \int_0^t{\int{\int{\overline{f}\left(-\widehat{p}\cdot\partial_x\overline{f}-\left(\widetilde{E}-\widehat{p}^\bot\widetilde{B}\right)\cdot\partial_p\overline{f}-\left(\overline{E}-\widehat{p}^\bot\overline{B}\right)\cdot\partial_pf\right)dp}dx}d\tau}\\
&= \int_0^t{\int{\int{\left(-\frac{1}{2}\divg_x\left(\widehat{p}\overline{f}^2\right)-\frac{1}{2}\divg_p\left(\left(\widetilde{E}-\widehat{p}^\bot\widetilde{B}\right)\overline{f}^2\right)-\overline{f}\left(\overline{E}-\widehat{p}^\bot\overline{B}\right)\cdot\partial_pf\right)}}}\\
& \phantom{\int_0^t\int{\int{}}}dpdxd\tau\\
&= -\int_0^t{\int{\int{\overline{f}\left(\overline{E}-\widehat{p}^\bot\overline{B}\right)\cdot\partial_pf\,dp}dx}d\tau}\\
&\leq \left\|f\right\|_{W^{1,\infty}}\int_0^t{\left\|\overline{f}\left(\tau\right)\right\|_{L^2}\left(\left\|\overline{E}\left(\tau\right)\right\|_{L^2}+\left\|\overline{B}\left(\tau\right)\right\|_{L^2}\right)d\tau},
\end{align*}
which implies
\begin{align*}
\left\|\overline{f}\left(t\right)\right\|_{L^2}\leq\left\|f\right\|_{W^{1,\infty}}\int_0^t{\left(\left\|\overline{E}\left(\tau\right)\right\|_{L^2}+\left\|\overline{B}\left(\tau\right)\right\|_{L^2}\right)d\tau}
\end{align*}
via the quadratic version of Gronwall's inequality, cf. \cite[Thm. 5]{gron}. Similarly,
\begin{align*}
\frac{1}{2}\left\|\overline{B}\left(t\right)\right\|^2_{L^2} &= \int_0^t{\int{\overline{B}\partial_t \overline{B}\,dx}d\tau} = \int_0^t{\int{\overline{B}\left(-\partial_{x_1}\overline{E}_2+\partial_{x_2}\overline{E}_1\right)dx}d\tau}\\
&= \int_0^t{\int{\left(\overline{E}_2\partial_{x_1}\overline{B}-\overline{E}_1\partial_{x_2}\overline{B}\right)dx}d\tau}\\
&= \int_0^t{\int{\left(-\overline{E}\cdot\partial_t\overline{E}-\overline{E}\cdot j_{\overline{f}}\right)dx}d\tau}.
\end{align*}
Note that in the integration by parts no surface terms appear because of $E$, $B\in H^1$. This computation leads to
\begin{align*}
& \frac{1}{2}\left(\left\|\overline{E}\left(t\right)\right\|^2_{L^2}+\left\|\overline{B}\left(t\right)\right\|^2_{L^2}\right) = \int_0^t{\int{-\overline{E}\cdot j_{\overline{f}}\,dx}d\tau}\\
&\leq \int_0^t{\left\|\overline{E}\left(\tau\right)\right\|_{L^2}\left\|j_{\overline{f}}\left(\tau\right)\right\|_{L^2}d\tau} \leq C\left(r\right)\int_0^t{\left(\left\|\overline{E}\left(\tau\right)\right\|_{L^2}+\left\|\overline{B}\left(\tau\right)\right\|_{L^2}\right)\left\|\overline{f}\left(\tau\right)\right\|_{L^2}d\tau}.
\end{align*}
Here, the last inequality holds because $\overline{f}$ vanishes as soon as $\left| p\right|>r$. Now again, the quadratic Gronwall lemma implies
\begin{align*}
\left\|\overline{E}\left(t\right)\right\|_{L^2}+\left\|\overline{B}\left(t\right)\right\|_{L^2} &\leq C\left(r\right)\int_0^t{\left\|\overline{f}\left(\tau\right)\right\|_{L^2}d\tau}\\
&\leq C\left(r,T\right)\left\|f\right\|_{W^{1,\infty}}\int_0^t{\left(\left\|\overline{E}\left(\tau\right)\right\|_{L^2}+\left\|\overline{B}\left(\tau\right)\right\|_{L^2}\right)d\tau}.
\end{align*}
This yields $\left(\overline{E},\overline{B}\right)=0$ and hence also $\overline{f}=0$.
\end{proof}

\section{The control-to-state operator}
From now on the initial data always stay fixed with $0\leq\mathring{f}\in C_c^2$ and $\mathring{E}$, $\mathring{B}\in C_c^3$, and $\divg\mathring{E}=\rho_{\mathring{f}}$. As a result of the last section we may define the control-to-state operator via
\begin{align*}
S\colon V&\rightarrow C_b^2\left(\left[0,T\right]\times\R^4\right)\times C_b^2\left(\left[0,T\right]\times\R^2;\R^2\right)\times C_b^2\left(\left[0,T\right]\times\R^2\right),\\
U&\mapsto \left(f,E,B\right).
\end{align*}
The goal is to show that $S$ is differentiable with respect to suitable norms.
\subsection{Lipschitz continuity}\label{lips}
First we show that $S$ is Lipschitz continuous; to be more precise, locally Lipschitz continuous. Let $U$, $\delta U\in V$ and denote $\left(f,E,B\right)=S\left(U\right)$, $\left(\overline{f},\overline{E},\overline{B}\right)=S\left(U+\delta U\right)$, and $\left(\widetilde{f},\widetilde{E},\widetilde{B}\right)=S\left(U+\delta U\right)-S\left(U\right)$. We arrive at the system
\begin{align*}
\partial_t\widetilde{f}+\widehat{p}\cdot\partial_x\widetilde{f}+\left(E-\widehat{p}^\bot B\right)\cdot\partial_p\widetilde{f} &= -\left(\widetilde{E}-\widehat{p}^\bot\widetilde{B}\right)\cdot\partial_p\overline{f},\\
\partial_t\widetilde{E}_1-\partial_{x_2}\widetilde{B} &= -j_{\widetilde{f},1}-\delta U_1,\\
\partial_t\widetilde{E}_2+\partial_{x_1}\widetilde{B} &= -j_{\widetilde{f},2}-\delta U_2,\\
\partial_t\widetilde{B}+\partial_{x_1}\widetilde{E}_2-\partial_{x_2}\widetilde{E}_1 &= 0,\\
\left(\widetilde{f},\widetilde{E},\widetilde{B}\right)\left(0\right) &= 0,
\end{align*}
which is equivalent to the system with second order Maxwell equations because of Lemmas \ref{equi} and \ref{LCco}.

Note that the $x$- and $p$-support of the density and the $C_b^1$-norm of the solution is controlled by a constant dependent on $T$, the initial data, $L$, and the $V$-norm of the control, see Theorem \ref{prop}. Therefore we can perform the same estimates also on the $\overline{\cdot}$-solution with a constant dependent on $T$, the initial data, $L$, and $\left\|U\right\|_V$ because, for instance, for $\left\|\delta U\right\|_V\leq 1$ we have $\left\|U+\delta U\right\|_V\leq\left\|U\right\|_V+1$. Hence we will only show the locally Lipschitz continuity of $S$.

Indeed, using again the estimates of Lemmas \ref{fest}, \ref{field}, and \ref{fieldder}, we see that
\begin{align*}
\left\|\left(\widetilde{f},\widetilde{E},\widetilde{B}\right)\right\|_{C_b^1}\leq C\left\|\delta U\right\|_V.
\end{align*}
Thus we have proved:
\begin{lemma}\label{lip}
$S\colon V\rightarrow C^1_b\left(\left[0,T\right]\times\R^4\right)\times C^1_b\left(\left[0,T\right]\times\R^2\right)^3$ is locally Lipschitz continuous.
\end{lemma}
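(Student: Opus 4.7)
The plan is to view the difference system as an instance of the generalized Vlasov-Maxwell system \eqref{GVM} analyzed in Section \ref{est}, with the identifications $\alpha\equiv 1$ (on the compact $p$-support at hand), $K = E - \widehat{p}^\bot B$, $g = -(\widetilde{E}-\widehat{p}^\bot\widetilde{B})\cdot\partial_p\overline{f}$, $d = \delta U$, and initial data $0$. Observe first that everything has controlled support: $\widetilde{f}=\overline{f}-f$ inherits a common $x$- and $p$-support (uniform in $t$) from Theorem \ref{prop}, applied simultaneously to $S(U)$ and $S(U+\delta U)$; moreover, since $\|U\|_V$ and $\|U+\delta U\|_V \leq \|U\|_V+1$ are both bounded, the $C_b^2$-bounds of Theorem \ref{prop}~iv) apply to both $(f,E,B)$ and $(\overline{f},\overline{E},\overline{B})$, so in all subsequent estimates $\|\partial_p\overline{f}\|_\infty$, $\|\partial_{x,p}\overline{f}\|_\infty$, $\|K\|_\infty$, $\|\partial_{t,x,p}K\|_\infty$ etc.\ may be absorbed into the generic constant $C$.

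For the $C_b^0$-bound, first apply Lemma \ref{fest}~i) to the Vlasov equation for $\widetilde{f}$: since the source $g$ is pointwise bounded by $C(\|\widetilde{E}(\tau)\|_\infty+\|\widetilde{B}(\tau)\|_\infty)$, we obtain
\[
\|\widetilde{f}(t)\|_\infty \leq C\int_0^t\bigl(\|\widetilde{E}(\tau)\|_\infty+\|\widetilde{B}(\tau)\|_\infty\bigr)\,d\tau.
\]
Next, pass to the equivalent second-order Maxwell formulation (Lemma \ref{equi}, using Lemma \ref{LCco} to verify \eqref{LC} for the difference, and \eqref{CC} being trivial because the initial data vanish) and apply Lemma \ref{field} with the identifications above; this yields
\[
\|\widetilde{E}(t)\|_\infty+\|\widetilde{B}(t)\|_\infty \leq C\|\delta U\|_{W^{1,1}(0,T;C_b^2)} + C\int_0^t\bigl(\|\widetilde{f}(\tau)\|_\infty+\|\widetilde{E}(\tau)\|_\infty+\|\widetilde{B}(\tau)\|_\infty\bigr)d\tau.
\]
Combining the two estimates and applying Gronwall's inequality gives
\[
\|\widetilde{f}(t)\|_\infty+\|\widetilde{E}(t)\|_\infty+\|\widetilde{B}(t)\|_\infty \leq C\|\delta U\|_V.
\]

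For the $C_b^1$-bound, differentiate the Vlasov equation for $\widetilde{f}$ in $(x,p)$ and apply Lemma \ref{fest}~ii); the source $\partial_{x,p}g$ is controlled by $C(\|\widetilde{E}\|_\infty+\|\widetilde{B}\|_\infty+\|\partial_{x}\widetilde{E}\|_\infty+\|\partial_x\widetilde{B}\|_\infty)$ thanks to the already-established $C_b^2$-bound on $\overline{f}$ from Theorem \ref{prop}~iv). Then apply Lemma \ref{fieldder} to the wave equations for $\widetilde{E}$, $\widetilde{B}$; here the logarithmic factor and the potentially troublesome term $\|\partial_{t,x,p}K(\tau)\|_\infty$ cause no difficulty because $K$ and $\partial_{x,p}\overline{f}$ are already bounded independently of $\delta U$, so they only contribute to the generic constant. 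The $d$-contribution produces the norm $C\|\delta U\|_{W^{2,1}(0,T;C_b^3)} \leq C\|\delta U\|_V$, which is exactly why four space derivatives are required in the definition of $V$. Closing this coupled system of integral inequalities by a second Gronwall argument yields the asserted bound.

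The only delicate point is bookkeeping: one must ensure that every quantity treated as part of the generic constant $C$ really does depend only on $T$, the initial data, $L$, and $\|U\|_V$ (and not on $\delta U$ beyond a unit ball). This is handled exactly as in the remark preceding the lemma, by restricting to $\|\delta U\|_V\leq 1$ so that the $C_b^2$-bounds of Theorem \ref{prop} apply uniformly to both $S(U)$ and $S(U+\delta U)$; hence the argument gives local, rather than global, Lipschitz continuity, which is precisely what is claimed.
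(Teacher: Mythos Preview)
Your approach is exactly the one the paper indicates: cast the difference system as an instance of \eqref{GVM} with vanishing initial data and invoke Lemmas \ref{fest}, \ref{field}, \ref{fieldder}; the paper's own proof is nothing more than this one-line citation. Your $C_b^0$ step and the bookkeeping concerning uniform supports and the restriction $\|\delta U\|_V\leq 1$ are correct.

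There is, however, one imprecision in the $C_b^1$ step. The displayed summary estimate in Lemma \ref{fieldder} is designed for a priori bounds and is \emph{not} homogeneous in $(f,g,d)$: the product
\[
(1+\|K\|_\infty+\|\widetilde f\|_\infty+\|g\|_\infty)(1+\|K\|_\infty)^2\Bigl(1+\ln_+(\cdot)+\textstyle\int\|\partial K\|\Bigr)
\]
does not tend to zero as $\delta U\to 0$, because the ``$1$'' and $\|K\|_\infty$ survive. So observing that the logarithm is bounded (which it is, since $\|\partial_{x,p}\widetilde f\|_\infty\leq\|\partial_{x,p}\overline f\|_\infty+\|\partial_{x,p}f\|_\infty\leq C$) only yields $\|\partial_{t,x}(\widetilde E,\widetilde B)\|_\infty\leq C$, not $\leq C\|\delta U\|_V$. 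The fix is to use the representation formulas of Lemma \ref{fieldder} themselves rather than the summary inequality: for zero initial data the $E^0$-, $B^0$-contributions and the $\tau=0$ boundary terms containing $\mathring f$ vanish, and all remaining $S$-, $T$-, $D$-terms are manifestly linear in $\widetilde f$, $\partial_{t,x}\widetilde f$, $\partial_{t,x}g$, $\delta U$. Since by Remark \ref{fund} the relevant kernels are integrable over the cone (no logarithm is needed here), one obtains directly
\[
\|\partial_{t,x}\widetilde E(t)\|_\infty+\|\partial_{t,x}\widetilde B(t)\|_\infty \leq C\|\delta U\|_V + C\int_0^t\bigl(\|\partial_{x,p}\widetilde f(\tau)\|_\infty+\|\partial_{t,x}g(\tau)\|_\infty\bigr)d\tau,
\]
which, together with Lemma \ref{fest}~ii), closes via Gronwall exactly as you outline.
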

\subsection{Solvability of a linearized system}
To show even differentiability of $S$ we will have to analyze a linearized system of the form
\begin{align}\tag{LVM}\label{LVM}\left.\begin{aligned}
\partial_t f+\widehat{p}\cdot\partial_x f+G\cdot\partial_p f &= \left(E-\widehat{p}^\bot B\right)\cdot g+a,\\
\partial_t E_1-\partial_{x_2}B &= -j_{f,1}+h_1,\\
\partial_t E_2+\partial_{x_1}B &= -j_{f,2}+h_2,\\
\partial_t B+\partial_{x_1}E_2-\partial_{x_2}E_1 &= 0,\\
\left(f,E,B\right)\left(0\right) &= 0
\end{aligned}\right\}\end{align}
with already given functions $a\in L^1\left(0,T;L^2\right)$, $G\in C_b^2$ with $\divg_pG=0$, $g\in C_b^1$ with $g=\partial_p\widetilde{g}$ for some $\widetilde{g}\in C_b^2$ and $g\left(t,x,p\right)=0$ for $\left| x\right|\geq r$ or $\left| p\right|\geq r$ for some $r>0$, and $h\in V$. We call $\left(f,E,B\right)$ a solution of \eqref{LVM} if $f$, $E$, and $B$ are of class $C\cap H^1$, the equalities hold pointwise almost everywhere, and $f$ vanishes for $\left| p\right|\geq R$ for some $R>0$.

A crucial estimate is the following:
\begin{lemma}\label{L2}
Let $\left(f,E,B\right)$ be a solution of \eqref{LVM}. Then
\begin{align*}
\left\|f\left(t\right)\right\|_{L^2}+\left\|E\left(t\right)\right\|_{L^2}+\left\|B\left(t\right)\right\|_{L^2}\leq C\left(R,\left\|g\right\|_\infty,T\right)\int_0^t{\left(\left\|a\left(\tau\right)\right\|_{L^2}+\left\|h\left(\tau\right)\right\|_{L^2}\right)d\tau}.
\end{align*}
\end{lemma}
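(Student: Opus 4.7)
The plan is to derive $L^2$-energy estimates for $f$ and for $(E,B)$ separately, couple them, and close via a quadratic Gronwall argument. Everything rests on the three qualitative assumptions given: $\divg_pG=0$, $\supp g\subset B_r\times B_r$, and $f(t,x,\cdot)\subset B_R$.

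First, I would multiply the Vlasov equation in \eqref{LVM} by $f$ and integrate over $(x,p)\in\R^4$. Using $\divg_x\widehat p=0$ and $\divg_pG=0$, the convective terms $f\,\widehat p\cdot\partial_xf+f\,G\cdot\partial_pf$ assemble into the $(x,p)$-divergence of $\tfrac12(\widehat p,G)f^2$ and vanish upon integration, leaving
\begin{align*}
\frac{1}{2}\frac{d}{dt}\|f(t)\|_{L^2}^2=\int\int f\bigl((E-\widehat p^\bot B)\cdot g+a\bigr)\,dp\,dx.
\end{align*}
Since $g$ is supported in $B_r\times B_r$ and $E$, $B$ are $p$-independent, Cauchy-Schwarz bounds the first term by $C(r,\|g\|_\infty)\|f(t)\|_{L^2}\bigl(\|E(t)\|_{L^2}+\|B(t)\|_{L^2}\bigr)$, and the $a$-term by $\|f(t)\|_{L^2}\|a(t)\|_{L^2}$.

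Second, I would multiply the first, second, and third Maxwell equations in \eqref{LVM} by $E_1$, $E_2$, and $B$ respectively, add them, and integrate in $x\in\R^2$. The curl-like cross terms collapse into $\partial_{x_2}(E_1B)-\partial_{x_1}(E_2B)$, which drops out by the $H^1$ regularity, yielding
\begin{align*}
\frac{1}{2}\frac{d}{dt}\bigl(\|E(t)\|_{L^2}^2+\|B(t)\|_{L^2}^2\bigr)=-\int E\cdot j_f\,dx+\int E\cdot h\,dx.
\end{align*}
The $p$-support condition combined with Cauchy-Schwarz in $p$ gives $\|j_f(t)\|_{L^2(\R^2)}\leq C(R)\|f(t)\|_{L^2(\R^4)}$, so the right-hand side is bounded by $C(R)\|f(t)\|_{L^2}\|E(t)\|_{L^2}+\|E(t)\|_{L^2}\|h(t)\|_{L^2}$.

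Setting $\phi(t):=\|f(t)\|_{L^2}^2+\|E(t)\|_{L^2}^2+\|B(t)\|_{L^2}^2$ and adding the two estimates, standard Young absorption produces
\begin{align*}
\frac{d}{dt}\phi(t)\leq C\,\phi(t)+C\sqrt{\phi(t)}\bigl(\|a(t)\|_{L^2}+\|h(t)\|_{L^2}\bigr),
\end{align*}
with $C=C(R,r,\|g\|_\infty)$. Since $\phi(0)=0$, the quadratic version of Gronwall's inequality (\cite[Thm. 5]{gron}, already used in the proof of Theorem \ref{uniq}) then gives $\sqrt{\phi(t)}\leq C(R,\|g\|_\infty,T)\int_0^t(\|a(\tau)\|_{L^2}+\|h(\tau)\|_{L^2})\,d\tau$, which is the claim (the dependence on $r$ being absorbed into the one on $g$).

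The main delicate point I anticipate is justifying the phase-space integration by parts for the Vlasov equation at only $C\cap H^1$ regularity; the customary workaround is to mollify the transporting field $(\widehat p,G)$, observe that the mollified field is still divergence free in $p$ (since the mollifier commutes with $\divg_p$), and pass to the limit. The rest is routine energy-method bookkeeping, with the compact $p$-support of $f$ and the compact $(x,p)$-support of $g$ doing all the real work.
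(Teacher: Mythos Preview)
Your proposal is correct and follows essentially the same route the paper indicates (the proof is omitted there, with a pointer to Theorem~\ref{uniq}): $L^2$-energy identities for $f$ and for $(E,B)$ exploiting $\divg_pG=0$, the compact $(x,p)$-support of $g$ and the uniform $p$-support of $f$ to control the coupling terms, and the quadratic Gronwall lemma from \cite{gron}. The only cosmetic difference is that you add the two energy inequalities and apply Gronwall once to $\phi=\|f\|_{L^2}^2+\|E\|_{L^2}^2+\|B\|_{L^2}^2$, whereas the template proof in Theorem~\ref{uniq} applies quadratic Gronwall to $f$ and to $(E,B)$ separately and then substitutes; both arguments close identically.
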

\begin{proof}
The proof is very similar to that of Theorem \ref{uniq} and is omitted.
\end{proof}
We approximate $G$, $\widetilde{g}$, and $h$ with smooth functions $G_k$, $\widetilde{g}_k$, and $h_k$ which are converging to $G$, $\widetilde{g}$, and $h$ in $C_b^2$ and $V$ respectively, and define $g_k:=\partial_p\widetilde{g}_k$. 

To show solvability of \eqref{LVM} for $a=0$ we proceed similarly as before. Define $f_0=E_{0,1}=E_{0,2}=B_0=0$ and solve in the $k$-th step
\begin{align*}
\partial_t f_k+\widehat{p}\cdot\partial_x f_k+G_k\cdot\partial_p f_k &= \left(E_{k-1}-\widehat{p}^\bot B_{k-1}\right)\cdot g_k,\\
f_k\left(0\right) &= 0
\end{align*}
by defining
\begin{align*}
f_k\left(t,x,p\right)=\int_0^t{\left(\left(E_{k-1}-\widehat{p}^\bot B_{k-1}\right)\cdot g_k\right)\left(X_k\left(0,t,x,p\right),P_k\left(0,t,x,p\right)\right)d\tau}
\end{align*}
with the characteristics
\begin{align*}
\dot{X}_k&=\widehat{P}_k,&X_k\left(t,t,x,p\right)&=x,\\
\dot{P}_k&=G_k\left(s,X_k,P_k\right),&P_k\left(t,t,x,p\right)&=p,
\end{align*}
and then solving
\begin{align*}
\partial_t^2 E_k-\Delta E_k &= -\partial_t j_{f_k}-\partial_t h_k-\partial_x\rho_{f_k}+\partial_x\int_0^t{\divg_xh_k\,d\tau},\\
\partial_t^2 B_k-\Delta B_k &= \partial_{x_1}j_{f_k,2}-\partial_{x_2}j_{f_k,1}+\partial_{x_1}h_{k,2}-\partial_{x_2}h_{k,1},\\
\left(E_k,B_k\right)\left(0\right) &= 0,\\
\partial_t E_k\left(0\right) &= -U_k\left(0\right),\\
\partial_t B_k\left(0\right) &= 0.
\end{align*}
All iterates are again of class $C^\infty$. Furthermore, the characteristics are independent of the solution sequence $\left(f_k,E_k,B_k\right)$. Thus we instantly have $\left|\dot{P_k}\right|\leq C$, so $\left| P_k-p\right|\leq CT$. Having a look at the formula for $f_k$ we conclude that $f_k$ vanishes as soon as
\begin{align}\label{dfp}
\left| p\right|\geq 2r+CT=:Q
\end{align}
since then the integrand vanishes as a result of
\begin{align*}
\left| P_k\left(s,t,x,p\right)\right|\geq \left| p\right|-\left| P_k-p\right|\geq 2r+CT-CT=2r.
\end{align*}
The same can be done for the $x$-coordinate starting with $\left|\dot{X_k}\right|\leq 1$; hence $f_k\left(t,x,p\right)=0$ for $\left| x\right|\geq 2r+T$. The assertions of Section \ref{est} are directly applicable. We do not have to insert some $\alpha$ because of the already known bound on the $p$-support of $f_k$. Therefore \eqref{LC} holds for the iterated system and we can thus switch between first order and second order Maxwell equations; note that $\left(E_{k-1}-\widehat{p}^\bot B_{k-1}\right)\cdot g_k=\divg_p\left(\left(E_{k-1}-\widehat{p}^\bot B_{k-1}\right)\widetilde{g}_k\right)$.

We proceed like in Section \ref{loc}: The iterates are bounded in $C_b^1$ and are Cauchy with respect to the $C_b$-norm. However, after that there appears a difference:  Unfortunately, we can not show the Cauchy property with respect to the $C_b^1$-norm. For this we would first have to bound second derivatives of $f_k$ which would require control of second derivatives of $g_k$. This, on the other hand, would require a smoother $g$. But for the later application we will not have more regularity of $g$ than $C_b^1$.

Thus we have to proceed differently: Since $f_k$, $E_k$, and $B_k$ are bounded in the $C_b^1$-norm, their first derivatives converge, after extracting a suitable subsequence, to the respective derivatives of $f$, $E$, and $B$ in $L^\infty$ in the weak-*-sense. Because of
\begin{align*}
& \left|\int_0^T{\int{\int{\left(G_k\cdot\partial_pf_k\varphi-G\cdot\partial_pf\varphi\right)dp}dx}d\tau}\right|\\
&\leq \int_0^T{\int{\int{\left| G_k-G\right|\left|\partial_pf_k\right|\left|\varphi\right| dp}dx}d\tau}+\left|\int_0^T{\int{\int{G\left(\partial_pf_k-\partial_pf\right)\varphi\,dp}dx}d\tau}\right|\\
&\leq C\left\|G_k-G\right\|_\infty\left\|\varphi\right\|_{L^1}+\left|\int_0^T{\int{\int{G\left(\partial_pf_k-\partial_pf\right)\varphi\,dp}dx}d\tau}\right|\rightarrow 0
\end{align*}
for $k\rightarrow\infty$ for any test function $\varphi$, $\left(f,E,B\right)$ satisfies \eqref{LVM} pointwise almost everywhere; the other terms are obviously easier to handle. Altogether we have found a solution of \eqref{LVM} of class $C\cap W^{1,\infty}$. Furthermore it is also of class $H^1$ because all sequence elements have compact support with respect to $x$, $p$ or $x$ respectively uniformly in $t$ and $k$; for the fields recall the representation formula.

For uniqueness, let $\left(f_1,E_1,B_1\right)$ be a solution of \eqref{LVM} too and define $f_2:=f-f_1$ and so on which yields
\begin{align*}
\partial_t f_2+\widehat{p}\cdot\partial_x f_2+G\cdot\partial_p f_2 &= \left(E_2-\widehat{p}^\bot B_2\right)\cdot g,\\
\partial_t E_{2,1}-\partial_{x_2}B_2 &= -j_{f_2,1},\\
\partial_t E_{2,2}+\partial_{x_1}B_2 &= -j_{f_2,2},\\
\partial_t B_2+\partial_{x_1}E_{2,2}-\partial_{x_2}E_{2,1} &= 0,\\
\left(f_2,E_2,B_2\right)\left(0\right) &= 0.
\end{align*}
Applying Lemma \ref{L2} this instantly implies that $f_2$, $E_2$, and $B_2$ vanish.

\subsection{Differentiability}
We want to study the differentiability of $S\colon V\rightarrow C\left(0,T;L^2\left(\R^4\right)\right)\times C\left(0,T;L^2\left(\R^2\right)\right)^3$. Let $U\in V$ and let $\delta U\in V$ be some perturbation. In the following denote $\left(f,E,B\right)=S\left(U\right)$ and $\left(\overline{f},\overline{E},\overline{B}\right)=S\left(U+\delta U\right)$. The candidate for the linearization is $S'\left(U\right)\delta U=\left(\delta f,\delta E,\delta B\right)$ where the right hand side satisfies
\begin{align*}
\partial_t\delta f+\widehat{p}\cdot\partial_x\delta f+\left(E-\widehat{p}^\bot B\right)\cdot\partial_p\delta f &= -\left(\delta E-\widehat{p}^\bot\delta B\right)\cdot\partial_pf,\\
\partial_t\delta E_1-\partial_{x_2}\delta B &= -j_{\delta f,1}-\delta U_1,\\
\partial_t\delta E_2+\partial_{x_1}\delta B &= -j_{\delta f,2}-\delta U_2,\\
\partial_t\delta B+\partial_{x_1}\delta E_2-\partial_{x_2}\delta E_1 &= 0,\\
\left(\delta f,\delta E,\delta B\right)\left(0\right) &= 0.
\end{align*}
Indeed, this system can be solved because of $G:=E-\widehat{p}^\bot B\in C_b^2$ (note that $\divg_pG=0$), $g:=-\partial_pf\in C_b^1$, and $h:=\delta U\in V$. First we note that $S'\left(U\right)$ is linear and that by Lemma \ref{L2}
\begin{align}\label{delta}
\left\|\left(\delta f,\delta E,\delta B\right)\right\|_{C\left(0,T;L^2\right)} \leq C\int_0^T{\left\|\delta U\left(t\right)\right\|_{L^2}dt}\leq C\left\|\delta U\right\|_V
\end{align}
which says that $S'\left(U\right)$ is bounded. The last inequality holds because of $\supp\,\delta U\left(t\right)\subset B_L$.

The next step is to show that $S\left(U+\delta U\right)-S\left(U\right)-S'\left(U\right)\delta U$ is 'small'. Defining $\widetilde{f}:=\overline{f}-f-\delta f$ and so on and subtracting the respective equations yield
\begin{align*}
\partial_t\widetilde{f}+\widehat{p}\cdot\partial_x\widetilde{f}+\left(E-\widehat{p}^\bot B\right)\cdot\partial_p\widetilde{f} &= -\left(\widetilde{E}-\widehat{p}^\bot\widetilde{B}\right)\cdot\partial_pf\\
&\phan -\left(\overline{E}-E-\widehat{p}^\bot\left(\overline{B}-B\right)\right)\cdot\partial_p\left(\overline{f}-f\right),\\
\partial_t\widetilde{E}_1-\partial_{x_2}\widetilde{B} &= -j_{\widetilde{f},1},\\
\partial_t\widetilde{E}_2+\partial_{x_1}\widetilde{B} &= -j_{\widetilde{f},2},\\
\partial_t\widetilde{B}+\partial_{x_1}\widetilde{E}_2-\partial_{x_2}\widetilde{E}_1 &= 0,\\
\left(\widetilde{f},\widetilde{E},\widetilde{B}\right)\left(0\right) &= 0.
\end{align*}
Applying Lemma \ref{L2} we conclude
\begin{align*}
\left\|\left(\widetilde{f},\widetilde{E},\widetilde{B}\right)\right\|_{C\left(0,T;L^2\right)}\leq C\int_0^T{\left\|a\left(t\right)\right\|_{L^2}dt}
\end{align*}
where
\begin{align*}
a:=-\left(\overline{E}-E-\widehat{p}^\bot\left(\overline{B}-B\right)\right)\cdot\partial_p\left(\overline{f}-f\right).
\end{align*}
Here we have to exploit the Lipschitz property of $S$. Lemma \ref{lip} yields
\begin{align*}
\left\|a\left(t\right)\right\|_{L^2}\leq C\left(\left\|\overline{E}-E\right\|_\infty+\left\|\overline{B}-B\right\|_\infty\right)\left\|\overline{f}-f\right\|_{C_b^1}\leq C\left\|\delta U\right\|_V^2.
\end{align*}
Note that for the first inequality the fact was used that $\overline{f}$ and $f$ have compact support in $x$ and $p$ uniformly in $t$ and independent of $\left\|\delta U\right\|_V$ for, for instance, $\left\|\delta U\right\|_V\leq 1$ (recall Theorem \ref{prop} and the reasoning in Section \ref{lips}).

Finally we arrive at
\begin{align}\label{widet}
\left\|\left(\widetilde{f},\widetilde{E},\widetilde{B}\right)\right\|_{C\left(0,T;L^2\right)}\leq C\left\|\delta U\right\|_V^2
\end{align}
which proves part of i) of the following theorem:
\begin{theorem}\label{diff} The following maps are continuously Fr\'echet-differentiable with locally Lipschitz derivative:
\begin{enumerate}[i)]
	\item $S\colon V\rightarrow W:=C\left(0,T;L^2\left(\R^4\right)\right)\times C\left(0,T;L^2\left(\R^2\right)\right)^3$,
	\item $\Phi:=\rho\circ S_1\colon V\rightarrow C\left(0,T;L^2\left(\R^2\right)\right)$, $U\mapsto\rho_f$,
	\item $\overline{\Phi}:=\rho\circ S_1\colon V\rightarrow C\left(0,T;L^1\left(\R^2\right)\right)$, $U\mapsto\rho_f$.
\end{enumerate}
\end{theorem}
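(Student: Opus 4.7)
The plan is that most of part i) is already in place: boundedness of the candidate derivative $S'(U) \in \mathcal{L}(V, W)$ is exactly \eqref{delta}, and the remainder estimate
\[
\left\|S(U+\delta U) - S(U) - S'(U)\delta U\right\|_W \leq C\left\|\delta U\right\|_V^2
\]
is \eqref{widet}. Therefore Fr\'echet differentiability at every $U \in V$ is already established; what remains for i) is to show that $U \mapsto S'(U)$ is locally Lipschitz (and hence continuous) from $V$ into $\mathcal{L}(V, W)$. Parts ii) and iii) will then follow via a chain-rule argument together with the uniform support properties of the densities.

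For the Lipschitz property, fix a bounded ball $\mathcal{B} \subset V$, take $U_1, U_2 \in \mathcal{B}$, and $\delta U \in V$. Write $(f^i, E^i, B^i) := S(U_i)$ and $(\delta f^i, \delta E^i, \delta B^i) := S'(U_i)\delta U$ for $i=1,2$. Subtracting the two linearized systems, the difference $(\delta f^1 - \delta f^2, \delta E^1 - \delta E^2, \delta B^1 - \delta B^2)$ satisfies a system of the form \eqref{LVM} with vanishing external current $h = 0$, with coefficients $G := E^1 - \widehat{p}^\bot B^1$ and $g := -\partial_p f^1$ (both uniformly bounded on $\mathcal{B}$ by Theorem \ref{prop}), and with source
\begin{align*}
a := &-\left((E^1 - E^2) - \widehat{p}^\bot (B^1 - B^2)\right) \cdot \partial_p \delta f^2\\
&- \left(\delta E^2 - \widehat{p}^\bot \delta B^2\right) \cdot \partial_p (f^1 - f^2).
\end{align*}
Combining Lemma \ref{lip} (local Lipschitz continuity of $S$ in the $C_b^1$-norm) with the bound \eqref{delta} applied at $U_2$, and using the uniform $p$-support control for $f^i$ (Theorem \ref{prop}) and for $\delta f^2$ (the bound \eqref{dfp} in the linearized construction), one obtains $\left\|a(t)\right\|_{L^2} \leq C\left\|U_1 - U_2\right\|_V \left\|\delta U\right\|_V$ with a constant depending only on $\mathcal{B}$. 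Applying Lemma \ref{L2} to the difference system then yields
\[
\left\|S'(U_1)\delta U - S'(U_2)\delta U\right\|_W \leq C\left\|U_1 - U_2\right\|_V \left\|\delta U\right\|_V,
\]
which is exactly the desired local Lipschitz property.

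For part ii), the linear map $\rho \colon f \mapsto 4\pi\int f\,dp$ satisfies $\left\|\rho_f(t)\right\|_{L^2_x} \leq C\left\|f(t)\right\|_{L^2_{x,p}}$ whenever $f(t)$ has $p$-support in $B_Q$, by Cauchy-Schwarz. The required uniform $p$-support is available both for $f = S_1(U)$ (Theorem \ref{prop}) and for the linearization $\delta f$ (bound \eqref{dfp}), with a constant uniform on bounded $V$-balls. Hence $\Phi = \rho \circ S_1$ is continuously Fr\'echet differentiable with derivative $\Phi'(U)\delta U = \rho_{\delta f}$, and both the remainder estimate and the Lipschitz estimate for the derivative transfer from i) by applying $\rho$. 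Part iii) is analogous: since the densities in question also have uniform $x$-support (Theorem \ref{prop} and the finite-speed-of-propagation behaviour of the linearized system), a further Cauchy-Schwarz step on this compact support gives $\left\|\rho_f(t)\right\|_{L^1_x} \leq C\left\|f(t)\right\|_{L^2_{x,p}}$, from which the $L^1$-variant follows.

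The main obstacle is the bookkeeping in step i): one must verify that all $p$- and $x$-supports (of $f^i$, $\delta f^i$, and hence of their differences) stay bounded by constants depending only on $\mathcal{B}$, so that the constants in Lemma \ref{L2} and in the estimate of $\left\|a\right\|_{L^2}$ are genuinely uniform in the particular $U_i \in \mathcal{B}$ chosen. Once this uniform support control is firmly in hand, all remaining steps reduce to already-proved $L^2$-type estimates via Lemma \ref{L2} and the Lipschitz result of Lemma \ref{lip}.
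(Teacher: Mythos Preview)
Your proof is correct and follows essentially the same route as the paper: differentiability from \eqref{delta} and \eqref{widet}, local Lipschitz continuity of $S'$ by writing the difference of two linearizations as a system of type \eqref{LVM} and invoking Lemma~\ref{L2}, and then passing to $\Phi$, $\overline{\Phi}$ via the uniform $p$- (and $x$-)support control. One small imprecision: in your estimate of $\|a(t)\|_{L^2}$, the term $((E^1-E^2)-\widehat{p}^\bot(B^1-B^2))\cdot\partial_p\delta f^2$ requires an $L^\infty$ bound on $\partial_p\delta f^2$ (which is available from the $C_b^1$ bounds obtained in the construction of the linearized solution), not merely the $C(0,T;L^2)$ bound \eqref{delta} that you cite; the paper handles this by using that the absolute values of the linearized fields and $\partial_p\delta f$ are bounded uniformly for $\|\delta U\|_V\le 1$.
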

\begin{proof}
For part ii) define
\begin{align}\label{phi'}
\Phi'\left(U\right)\delta U:=\rho_{\delta f}.
\end{align}
Now it is crucial to bound the $p$-support of $\overline{f}$, $f$, and $\delta f$ by a constant $C>0$ only depending on $T$, the initial data, $L$, and $\left\|U\right\|_V$. We first consider $\delta f$. The control of the $p$-support in \eqref{dfp} holds for all iterates and hence for $\delta f$. The constant there only depends on $T$, $\left\|G\right\|_\infty=\left\|E-\widehat{p}^\bot B\right\|_\infty$, the $p$-support of $\partial_p f$, and $L$. Because of Theorem \ref{prop} the absolute values of the fields $E$ and $B$ and the $p$-support of $f$ are controlled by some constant only depending on $T$, the initial data, $L$, and $\left\|U\right\|_V$. Hence we have together with \eqref{delta}
\begin{align*}
\left\|\rho_{\delta f}\left(t\right)\right\|_{L^2}=\left(\int{\left|\int{\delta f\,dp}\right|^2 dx}\right)^{\frac{1}{2}}\leq C\left(\int{\int{\left|\delta f\right|^2dp}dx}\right)^{\frac{1}{2}}\leq C\left\|\delta U\right\|_V
\end{align*}
which implies that $\Phi'\left(U\right)$ is bounded. Furthermore the $p$-supports of $\overline{f}$ and $f$ only depend on $T$, the initial data, $L$, and $\left\|U\right\|_V$ (for again $\left\|\delta U\right\|_V\leq 1$ for example). Hence the same assertion holds for $\widetilde{f}=\overline{f}-f-\delta f$ and therefore with \eqref{widet}
\begin{align*}
\left\|\rho_{\widetilde f}\left(t\right)\right\|_{L^2}=\left(\int{\left|\int{\widetilde f\,dp}\right|^2 dx}\right)^{\frac{1}{2}}\leq C\left(\int{\int{\left|\widetilde f\right|^2dp}dx}\right)^{\frac{1}{2}}\leq C\left\|\delta U\right\|_V^2.
\end{align*}
Together with the equality
\begin{align*}
\Phi\left(U+\delta U\right)-\Phi\left(U\right)-\Phi'\left(U\right)\delta U=\rho_{\overline{f}}-\rho_f-\rho_{\delta f}=\rho_{\widetilde{f}}
\end{align*}
this instantly yields that $\Phi'\left(U\right)$ is indeed the Fr\'echet-derivative of $\Phi$ in $U$. Part iii) is an instant consequence of ii) and the support assertions discussed above. The derivative of $\overline{\Phi}$ is given by \eqref{phi'} as before.

To show continuity of $S'$, let $\delta V\in V$ with $\left\|\delta V\right\|_V\leq 1$. We have to investigate
\begin{align*}
\left(\check{f},\check{E},\check{B}\right):=\left(f^1,E^1,B^1\right)-\left(f^0,E^0,B^0\right):=S'\left(U+\delta U\right)\delta V-S'\left(U\right)\delta V.
\end{align*}
Applying the previously given formula for $S'$ we arrive at
\begin{align*}
\partial_t\check{f}+\widehat{p}\cdot\partial_x\check{f}+\left(\overline{E}-\widehat{p}^\bot\overline{B}\right)\cdot\partial_p\check{f} &= -\left(\check{E}-\widehat{p}^\bot\check{B}\right)\cdot\partial_p\overline{f}-\left(E^0-\widehat{p}^\bot B^0\right)\cdot\partial_p\left(\overline{f}-f\right)\\
&\phan -\left(\overline{E}-E-\widehat{p}^\bot\left(\overline{B}-B\right)\right)\cdot\partial_pf^0,\\
\partial_t\check{E}_1-\partial_{x_2}\check{B} &= -j_{\check{f},1},\\
\partial_t\check{E}_2+\partial_{x_1}\check{B} &= -j_{\check{f},2},\\
\partial_t\check{B}+\partial_{x_1}\check{E}_2-\partial_{x_2}\check{E}_1 &= 0,\\
\left(\check{f},\check{E},\check{B}\right)\left(0\right) &= 0.
\end{align*}
We know that the $p$-support of $f^0$ and the absolute values of $E^0$ and $B^0$ are controlled by a constant only depending on $T$, the initial data, $L$, $\left\|U\right\|_V$, and $\left\|\delta V\right\|_V$ (the latter can be neglected, of course). The dependence on some terms in $f$, $E$, and $B$ can be eliminated like in the beginning of this proof. Hence, proceeding as before and using Lemma \ref{L2} and the locally Lipschitz continuity of $S$, we conclude
\begin{align*}
\left\|\left(\check{f},\check{E},\check{B}\right)\right\|_W\leq C\left\|\delta U\right\|_V
\end{align*}
where $C$ only depends on $T$, the initial data, $L$, and $\left\|U\right\|_V$. This leads to
\begin{align*}
\left\|S'\left(U+\delta U\right)-S'\left(U\right)\right\|_{L\left(V,W\right)}\leq C\left\|\delta U\right\|_V
\end{align*}
which says that $S'$ is even locally Lipschitz continuous.

Using the assertions for the $p$-support of $f^0$ and $f^1$ (controlled by a constant only depending on $T$, the initial data, $L$, and $\left\|U\right\|_V$ if $\left\|\delta U\right\|_V\leq 1$) we conclude
\begin{align*}
\left\|\rho_{\check{f}}\right\|_{C\left(0,T;L^2\right)},\left\|\rho_{\check{f}}\right\|_{C\left(0,T;L^1\right)}\leq C\left\|\check{f}\right\|_{C\left(0,T;L^2\right)}\leq C\left\|\delta U\right\|_V
\end{align*}
as before. This implies that $\Phi'$ and $\overline{\Phi}'$ are locally Lipschitz continuous.
\end{proof}

\section{Optimal control problem}
Now we consider some optimal control problems. We want to minimize some objective function that depends on the external control $U$ and the state $\left(f,E,B\right)$. The control and the state are coupled via \eqref{CVM} so that \eqref{CVM} appears as a constraint.

We first give thought to a problem with general controls and a general objective function. Then we proceed with optimizing problems where the objective function is explicitly given and where the control set is restricted to such controls that are realizable in applications concerning the control of a plasma.
\subsection{General problem}
\subsubsection{Control space}
Until now we have worked with the control space
\begin{align*}
V=\left\{U\in W^{2,1}\left(0,T;C_b^4\left(\R^2;\R^2\right)\right)\mid U\left(t,x\right)=0\text{ for }\left| x\right|\geq L\right\}.
\end{align*}
To apply standard optimization techniques it is necessary that the control space is reflexive. Hence we choose
\begin{align*}
\mathcal{U}:=\left\{U\in H^2\left(0,T;W^{5,\gamma}\left(\R^2;\R^2\right)\right)\mid U\left(t,x\right)=0\text{ for }\left| x\right|\geq L\right\},
\end{align*}
where $\gamma>2$ is fixed, equipped with the $H^2\left(0,T;W^{5,\gamma}\right)$-norm. By Sobolev's embedding theorems, $\mathcal{U}$ is continuously embedded in $V$.

In accordance with Theorems \ref{prop} and \ref{diff}, we have already proved that there is a continuously differentiable control-to-state operator
\begin{align*}
S\colon V&\rightarrow \left(C_b^2\left(\left[0,T\right]\times\R^4\right)\times C_b^2\left(\left[0,T\right]\times\R^2;\R^2\right)\times C_b^2\left(\left[0,T\right]\times\R^2\right),\left\|\cdot\right\|_{C\left(0,T;L^2\right)}\right),\\
U&\mapsto \left(f,E,B\right),
\end{align*}
such that \eqref{CVM} holds for $\left(f,E,B\right)$ and control $U$. Furthermore, the map $U\mapsto\rho_f$ is continuously differentiable with respect to the $C\left(0,T;L^2\right)$- and $C\left(0,T;L^1\right)$-norm in the image space. Moreover, the $C_b^2$-norm and the $x$- and $p$-support of $\left(f,E,B\right)$ are controlled by a constant only depending on $T$, $L$, the initial data, and $\left\|U\right\|_V$.

By $\mathcal{U}\hookrightarrow V$, these assertions also hold with $\mathcal{U}$ instead of $V$.
\subsubsection{Existence of minimizers}\label{min}
We consider the general problem
\begin{align}\tag{GP}\label{GP}\left.\begin{aligned}
\min_{\substack{\left(f,E,B\right)\in\left(C^2\cap H^1\right)^3,U\in\mathcal{U}}}\,&\phi\left(f,E,B,U\right)\\
\text{s.t.}\,&\left(f,E,B\right)=S\left(U\right).
\end{aligned}\right\}\end{align}
We have to specify some assumptions on $\phi$:
\begin{condition}\makeatletter\hyper@anchor{\@currentHref}\makeatother\label{phico}
\begin{enumerate}[i)]
	\item $\phi\colon\left(C^2\cap H^1\right)^3\times\mathcal{U}\rightarrow\R\cup\left\{\infty\right\}$ and $\phi\not\equiv\infty$,
	\item $\phi$ is coercive in $U\in\mathcal{U}$, i.e. in general: Let $X$, $Y$ be normed spaces; $\psi\colon X\times Y\rightarrow\R$ is said to be coercive in $y\in Y$ iff for all sequences $\left(y_k\right)\subset Y$ with $\left\|y_k\right\|_Y\rightarrow\infty$, $k\rightarrow\infty$, then also $\psi\left(x_k,y_k\right)\rightarrow\infty$, $k\rightarrow\infty$, for any sequence $\left(x_k\right)\subset X$,
	\item $\phi$ is weakly lower semicontinuous, i.e.: if $\left(f_k,E_k,B_k\right)\rightharpoonup\left(f,E,B\right)$ in $H^1$ and $U_k\rightharpoonup U$ in $\mathcal{U}$, then $\phi\left(f,E,B,U\right)\leq\liminf_{\substack{k\rightarrow\infty}}\phi\left(f_k,E_k,B_k,U_k\right)$.
\end{enumerate}
\end{condition}
These assumptions allow us to prove existence of a (not necessarily unique) minimizer. We will first prove a lemma that will be useful later:
\begin{lemma}\label{minlma}
Let $\left(U_k\right)\subset V$ be bounded and $\left(f_k,E_k,B_k\right)=S\left(U_k\right)$. Then, after extracting a suitable subsequence, it holds that:
\begin{enumerate}[i)]
	\item The sequences $\left(f_k\right)$, $\left(E_k\right)$, and $\left(B_k\right)$ converge weakly in $H^1$, weakly-* in $W^{1,\infty}$, and strongly in $L^2$ to some $f$, $E$, and $B$.
	\item There is $r>0$ such that $f$, $E$, $B$, and, for all $k\in\mathbb{N}$, $f_k$, $E_k$, and $B_k$ vanish if $\left| x\right|\geq r$ or $\left| p\right|\geq r$.
	\item If additionally $U_k\rightarrow U$ in the sense of distributions for some $U\in V$ for $k\rightarrow\infty$, then $\left(f,E,B\right)=S\left(U\right)$ and $f$, $E$, and $B$ are of class $C_b^2$.
\end{enumerate}
\end{lemma}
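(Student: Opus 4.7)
The plan is to combine the a-priori estimates of Theorem \ref{prop} with standard compactness arguments, and then use uniqueness for the identification step. First I would observe that $(U_k)$ being bounded in $V$ implies, via Theorem \ref{prop}, that the $C_b^2$-norms of the $(f_k,E_k,B_k)$ and their $x$- and $p$-supports are all controlled by a single constant depending only on $T$, the initial data, $L$, and $\sup_k\|U_k\|_V$. This immediately yields ii): each $f_k$ vanishes for $|x|$ or $|p|$ sufficiently large (with a common threshold), each $E_k$, $B_k$ vanishes outside a common ball by Theorem \ref{prop} iii), and the uniform support property passes to any pointwise (or distributional) limit.

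For i), the uniform $C_b^2$ bound together with the uniform compact support gives uniform bounds both in $W^{1,\infty}$ and in $H^1$ (the latter since integration effectively takes place over a fixed bounded set). Banach-Alaoglu applied to $L^\infty$ yields weak-$\ast$ subsequential convergence of the functions and their first derivatives, hence weak-$\ast$ convergence in $W^{1,\infty}$; reflexivity of $H^1$ gives weak subsequential convergence in $H^1$. For the strong $L^2$ convergence I would appeal to Rellich-Kondrachov on the fixed ball $B_r$ containing all supports: weak $H^1$-convergence there implies strong $L^2$-convergence, and since everything vanishes outside $B_r$ this yields strong convergence in $L^2(\R^j)$. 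Alternatively, uniform $C^2$ bounds with uniform compact support give $C^1$-precompactness through the Arzelà-Ascoli theorem, which is a convenient stronger alternative to have at hand for iii).

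For iii), the task is to pass to the limit in \eqref{CVM} with controls $U_k\to U$ in $\mathcal{D}'$. All linear terms — the transport term $\widehat{p}\cdot\partial_x f_k$, the time derivatives, the Maxwell derivatives of the fields, the current $j_{f_k}$, and the external $U_k$ — pass to the limit in the distributional sense directly from the convergences in i) and the assumption on $U_k$. The main obstacle is the nonlinear term $(E_k-\widehat{p}^\bot B_k)\cdot\partial_p f_k$ in the Vlasov equation; here I would combine the strong $L^2$-convergence of $(E_k,B_k)$ with the weak $L^2$-convergence (or weak-$\ast$ $L^\infty$-convergence) of $\partial_p f_k$ when tested against smooth compactly supported functions. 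Alternatively, the $C^1$-convergence coming from Arzelà-Ascoli permits passage to the limit in the pointwise product without any further argument. Either way, the product converges to $(E-\widehat{p}^\bot B)\cdot\partial_p f$ in $\mathcal{D}'$.

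Finally, the initial condition $(f,E,B)(0)=(\mathring{f},\mathring{E},\mathring{B})$ transfers to the limit because the weak $H^1$-convergence in time together with the embedding $H^1(0,T;L^2)\hookrightarrow C(0,T;L^2)$ gives continuity of traces at $t=0$. The limit $(f,E,B)$ thus solves \eqref{CVM} with control $U$ in the distributional sense, and it has the regularity $W^{1,\infty}\cap H^1$ and the uniform $p$-support required by Theorem \ref{uniq}. Uniqueness in that class then forces $(f,E,B)=S(U)$, whence $f$, $E$, $B$ are of class $C_b^2$ by the very definition of the control-to-state operator.
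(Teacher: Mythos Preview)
Your proposal is correct and follows essentially the same approach as the paper: invoke Theorem \ref{prop} for uniform support and $C_b$-type bounds to get ii) and boundedness in $H^1$ and $W^{1,\infty}$, extract weak/weak-$\ast$ limits, use Rellich--Kondrachov on the fixed support for strong $L^2$-convergence, pass to the limit in \eqref{CVM} by splitting the nonlinear term into a strong-$L^2$ factor and a weakly convergent factor, and finish via Theorem \ref{uniq}. The only cosmetic difference is your side remark on Arzel\`a--Ascoli, which the paper does not use but which would also work.
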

\begin{proof}
By Theorem \ref{prop}, on the one hand, $\left(f_k,E_k,B_k\right)$ is bounded in the $C_b^1$-norm. On the other hand, $f_k$ vanishes as soon as $\left| p\right|$ is large enough uniformly in $k$. Moreover, $f_k$, $E_k$, and $B_k$ vanish as soon as $\left| x\right|$ is large enough. Hence $\left(f_k,E_k,B_k\right)$ is also bounded in $H^1$ and in $H^1\left(0,T;L^2\right)$. Together with the boundedness in $C_b^1$, $\left(f_k,E_k,B_k\right)$ converge, after extracting a suitable subsequence, to some $\left(f,E,B\right)$, namely weakly in $H^1$, and weakly-* in $W^{1,\infty}$. This proves ii) and part of i).

For the remaining part of i) (strong convergence in $L^2$) we have to exploit some compactness. This compactness is guaranteed by the theorem of Rellich-Kondrachov. By the reasoning above, $\left(f_k,E_k,B_k\right)$ are bounded in $H^1$ and in fact, only a bounded subset of the $x$- and $p$-space matters. Hence (a subsequence of) $\left(f_k,E_k,B_k\right)$ converges strongly in $L^2$ to the limit $\left(f,E,B\right)$.

For iii), we have to pass to the limit in \eqref{CVM}. First, the initial conditions are preserved in the limit since $H^1\hookrightarrow H^1\left(0,T;L^2\right)\hookrightarrow C\left(0,T;L^2\right)$. Furthermore the Vlasov and Maxwell equations hold pointwise almost everywhere for the limit functions: The only difficult part is the nonlinear term in the Vlasov equation. To handle this, we have to make use of the strong convergence in $L^2$ obtained above. We find for each $\varphi\in C_c^\infty\left(\left]0,T\right[\times\R^4\right)$ that
\begin{align*}
& \left|\int_0^T{\int{\int{\left(\left(E_k-\widehat{p}^\bot B_k\right)\cdot\partial_pf_k-\left(E-\widehat{p}^\bot B\right)\cdot\partial_pf\right)\varphi\,dp}dx}dt}\right|\\
&\leq \left|\int_0^T{\int{\int{\left(E-\widehat{p}^\bot B\right)\cdot\left(\partial_pf_k-\partial_pf\right)\varphi\,dp}dx}dt}\right|\\
&\phan +\left\|\partial_pf_k\right\|_\infty\int_0^T{\int{\int{\left(\left| E_k-E\right|+\left| B_k-B\right|\right)\left|\varphi\right| dp}dx}dt}.
\end{align*}
Both terms converge to $0$ for $k\rightarrow\infty$ since $f_k\rightharpoonup f$ in $H^1$, $E_k\rightarrow E$, $B_k\rightarrow B$ in $L^2$, and $f_k$ is bounded in $C_b^1$. Therefore, altogether, \eqref{CVM} holds pointwise almost everywhere. Now we can apply Theorem \ref{uniq} to conclude $\left(f,E,B\right)$ equals $S\left(U\right)$ and is hence of class $C_b^2$.
\end{proof}
\begin{theorem}
Let $\phi$ satisfy Condition \ref{phico}. Then there is a minimizer of \eqref{GP}.
\end{theorem}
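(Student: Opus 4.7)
The plan is to apply the direct method of the calculus of variations, exploiting that Lemma \ref{minlma} already does all the heavy lifting on the PDE side. First I would dispense with the trivial case: if $\inf\phi=+\infty$ on the constraint set, then every feasible $U$ is a minimizer, so we may assume the infimum is finite. I would then pick a minimizing sequence $\left(U_k\right)\subset\mathcal{U}$ with $\phi\left(S\left(U_k\right),U_k\right)\to\inf$, and use the coercivity of $\phi$ in $U$ (Condition \ref{phico} ii)) to conclude that $\left(U_k\right)$ is bounded in $\mathcal{U}$.

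Next, I would observe that $\mathcal{U}$ is reflexive. Indeed, for $\gamma>2$ the space $W^{5,\gamma}\left(\R^2;\R^2\right)$ is reflexive, so $H^2\left(0,T;W^{5,\gamma}\right)$ is reflexive; the constraint $U\left(t,x\right)=0$ for $\left| x\right|\geq L$ defines a closed linear subspace, hence $\mathcal{U}$ inherits reflexivity. After extracting a subsequence (not relabelled), one therefore has $U_k\rightharpoonup U^*$ in $\mathcal{U}$ for some $U^*\in\mathcal{U}$. Via the embedding $\mathcal{U}\hookrightarrow V$ this implies $\left(U_k\right)$ is bounded in $V$ and converges to $U^*$ in the sense of distributions.

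Now I would apply Lemma \ref{minlma} to the sequence $\left(U_k\right)\subset V$: after a further subsequence, the states $\left(f_k,E_k,B_k\right):=S\left(U_k\right)$ converge weakly in $H^1$ (and strongly in $L^2$) to some $\left(f^*,E^*,B^*\right)$, and part iii) of the lemma identifies this limit as $S\left(U^*\right)$, with $f^*,E^*,B^*\in C_b^2$. In particular, $\left(f^*,E^*,B^*,U^*\right)$ is feasible for \eqref{GP}.

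Finally, I would invoke weak lower semicontinuity (Condition \ref{phico} iii)) on the pair of weak convergences $\left(f_k,E_k,B_k\right)\rightharpoonup\left(f^*,E^*,B^*\right)$ in $H^1$ and $U_k\rightharpoonup U^*$ in $\mathcal{U}$, to obtain
\begin{align*}
\phi\left(f^*,E^*,B^*,U^*\right)\leq\liminf_{k\to\infty}\phi\left(f_k,E_k,B_k,U_k\right)=\inf\phi,
\end{align*}
so that $U^*$ is a minimizer. There is no serious obstacle: the coercivity reduces the problem to weak compactness, reflexivity of $\mathcal{U}$ supplies it, Lemma \ref{minlma} transports weak convergence across the nonlinear map $S$, and Condition \ref{phico} iii) closes the argument. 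The only point requiring care is that the subsequence extractions must be chained consistently, and that Lemma \ref{minlma} only needs distributional (not weak-$\mathcal{U}$) convergence of the controls, which is automatic here.
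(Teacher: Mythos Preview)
Your proposal is correct and follows essentially the same route as the paper's proof: direct method via coercivity to bound $(U_k)$ in $\mathcal{U}$, reflexivity of $\mathcal{U}$ to extract a weak limit, Lemma \ref{minlma} to pass to the limit in the constraint $S(U_k)\rightharpoonup S(U^*)$ in $H^1$, and Condition \ref{phico} iii) to conclude. The only cosmetic differences are that you handle the case $\inf\phi=+\infty$ explicitly and spell out why $\mathcal{U}$ is reflexive, whereas the paper takes these for granted and instead makes the passage from weak-$\mathcal{U}$ convergence to distributional convergence explicit via $\mathcal{U}\hookrightarrow L^1$.
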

\begin{proof}
We consider a minimizing sequence $\left(f_k,E_k,B_k,U_k\right)$ with $\left(f_k,E_k,B_k\right)=S\left(U_k\right)$ and
\begin{align*}
\lim_{\substack{k\rightarrow\infty}}\phi\left(f_k,E_k,B_k,U_k\right)=m:=\inf_{\substack{U\in\mathcal{U},\left(f,E,B\right)=S\left(U\right)}}\phi\left(f,E,B,U\right)\in\R\cup\left\{-\infty\right\}.
\end{align*}
By coercivity in $U$, cf. Condition \ref{phico} ii), $\left(U_k\right)$ is bounded in $\mathcal{U}$ and therefore in $V$. Hence we may extract a weakly convergent subsequence (also denoted by $U_k$) since $H^2\left(0,T;W^{5,\gamma}\right)$ is reflexive. The weak limit $U$ is the candidate for being an optimal control. Of course, by weak convergence, $U$ vanishes for $\left| x\right|\geq L$; hence $U\in\mathcal{U}$. Because of $\mathcal{U}\hookrightarrow L^1$ we also get $U_k\rightharpoonup U$ in $L^1$ and hence $U_k\rightarrow U$ in the sense of distributions. Lemma \ref{minlma} yields $\left(f_k,E_k,B_k\right)\rightharpoonup\left(f,E,B\right)$ in $H^1$ (after extracting a suitable subsequence) and $\left(f,E,B\right)=S\left(U\right)$. Together with the weak lower semicontinuity of $\phi$, see Condition \ref{phico} iii), we instantly get $\phi\left(f,E,B,U\right)=m$ which proves optimality.
\end{proof}
In order to be able of examining some problem that is somehow application-oriented, we first have to think about possible problems concerning the conditions on the objective function $\phi$. Especially the coercivity in $U$ will make some trouble since the $\mathcal{U}$-norm is pretty strong. One can try to guarantee these conditions in various ways, for example if $\phi\left(f,E,B,U\right)=\psi\left(f,E,B\right)+\left\|U\right\|_{\mathcal{U}}^2$; the objective function contains some cost term of the control in the full $\mathcal{U}$-norm. But typically in applications, such a strong cost term makes no sense. Furthermore, first order optimality conditions would contain a differential equation of very high order, which is hard to solve.

On the other hand, we can not simply use a less regular control space. Firstly, we need $\mathcal{U}\hookrightarrow V$ to ensure that the control-to-state operator is differentiable; this will be useful later. Secondly, $\mathcal{U}$ needs to be reflexive to extract (in some sense) converging subsequences from a minimizing sequence. Here we should remark that we also could demand $W^{2,p}$-regularity in time for $p>1$ instead of $H^2$-regularity which would allow more controls if $1<p<2$. However, working in a $H^2$-setting (at least in time) is more convenient.
\subsection{An optimization problem with realizable external currents}
\subsubsection{Motivation}
As the previous considerations suggest, it would be nice if we somehow eliminated the variability of the control with respect to the space coordinate. This can be achieved by only considering controls of the form
\begin{align*}
U\left(t,x\right)=\sum_{j=1}^N{u_j\left(t\right)z_j\left(x\right)}
\end{align*}
where the functions $0\not\equiv z_j\in C_b^6\left(\R^2;\R^2\right)$ with $z_j$ vanishing for $\left| x\right|\geq r_j>0$ are fixed and we only vary the functions $u_j\in H^2\left(\left[0,T\right]\right)$.

From a physical point of view, this model describes an ensemble of $N$ coils with 'size' $r_j$, that stay fixed in time. Obviously, $U$ is an element of $V$ if we set $L=\max\left\{r_j\mid j=1,\dots N\right\}$. Each coil generates a current $z_j$ at full capacity that is tangential to the plane and that extends infinitely in the third space dimension. We control the system by turning these coils on whereby the capacity $u_j$ is suitably adjusted as a function of time. Hence we will have to consider an additional constraint $\left| u_j\right|\leq 1$. Physically, the consideration only of controls of the above form is no substantial restriction at all because only such control fields are realizable in applications.

A similar approach was done by P. Knopf and the author \cite{kw}.
\subsubsection{Formulation}
The problem to be considered is the following:
\begin{align}\tag{P}\label{P1}\left.\begin{aligned}
\min_{\left(f,E,B\right)\in\left(C^2\cap H^1\right)^3,u\in H^2\left(\left[0,T\right]\right)^N}\,&\frac{1}{2}\left\|\rho_f-\rho_d\right\|_{L^2\left(\left[0,T\right]\times\R^2\right)}^2+\frac{\beta}{2}\sum_{j=1}^N{c_j\left(\left\|u_j\right\|_{L^2\left(\left[0,T\right]\right)}^2\right.}\\
&\left.+\beta_1\left\|\partial_tu_j\right\|_{L^2\left(\left[0,T\right]\right)}^2+\beta_2\left\|\partial_t^2u_j\right\|_{L^2\left(\left[0,T\right]\right)}^2\right)\\
\text{s.t.}\,&\left(f,E,B\right)=S\left(\sum_{j=1}^N{u_jz_j}\right),\left| u_j\right|\leq 1
\end{aligned}\right\}\end{align}
where $c_j:=\left\|z_j\right\|_{L^2\left(\R^2;\R^2\right)}^2$. We give some comments on the objective function:
\begin{itemize}
	\item The charge density shall be as close as possible to some given desired density $\rho_d=\rho_d\left(t,x\right)\in L^2\left(\left[0,T\right]\times\R^2\right)$. One could consider the $L^2$-norm of some $f-f_d$ instead but the space coordinates of the particles are of actual interest rather than their momenta.
	\item Furthermore, the cost term containing the control shall be as small as possible. We have to use the full $H^2$-norm (an equivalent norm, to be more precise) of the $u_j$ in the regularization term so that the objective function is coercive in $u\in H^2\left(\left[0,T\right]\right)^N$. However, the $L^2$-norms of the $u_j$ itself are more interesting than the ones of their derivatives. Hence it is suitable to choose $0<\beta_1,\beta_2\ll 1$.
	\item The parameter $\beta>0$ indicates which of the two aims mentioned above shall rather be achieved.
\end{itemize}
\subsubsection{Existence of minimizers}
Section \ref{min} is useful for showing existence of minimizers of \eqref{P1}.
\begin{theorem}
There is a minimizer of \eqref{P1}.
\end{theorem}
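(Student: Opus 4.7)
The plan is to apply the direct method of the calculus of variations, as set up in Section \ref{min}, adapted to the restricted control structure of \eqref{P1}. First I would pick a minimizing sequence $(u^k)_{k\in\mathbb{N}} \subset H^2([0,T])^N$ satisfying the constraint $|u_j^k(t)| \leq 1$ for all $t \in [0,T]$, $j \in \{1,\dots,N\}$ and all $k$, and set $U^k := \sum_{j=1}^N u_j^k z_j \in \mathcal{U}$ together with $(f_k,E_k,B_k) := S(U^k)$.

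Next I would extract subsequential weak limits. Because $\beta$, $\beta_1$, $\beta_2$, and the $c_j$ are strictly positive and the $L^2$-norms of $u_j$, $\partial_t u_j$, and $\partial_t^2 u_j$ together furnish a norm equivalent to the $H^2([0,T])$-norm, the regularization part of the objective function dominates $C\sum_j \|u_j^k\|_{H^2([0,T])}^2$, so $(u_j^k)$ is bounded in $H^2([0,T])$ for every $j$. Reflexivity of $H^2([0,T])$ together with the compact embedding $H^2([0,T])\hookrightarrow C([0,T])$ then yields, after extraction, $u_j^k \rightharpoonup u_j$ in $H^2([0,T])$ and $u_j^k \to u_j$ uniformly on $[0,T]$; in particular, the pointwise constraint $|u_j(t)| \leq 1$ is preserved. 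Setting $U := \sum_j u_j z_j$, the fixed spatial profiles $z_j\in C_b^6$ give $U^k \rightharpoonup U$ in $\mathcal{U}$ and a uniform bound on $\|U^k\|_V$ (via $H^2([0,T])\hookrightarrow W^{2,1}([0,T])$).

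Then I would invoke Lemma \ref{minlma}. The boundedness of $(U^k)$ in $V$ combined with the convergence $U^k \to U$ in the sense of distributions (a consequence of $\mathcal{U}\hookrightarrow L^1$ and the weak convergence in $\mathcal{U}$) provides, after a further extraction, $(f_k,E_k,B_k)\rightharpoonup (f,E,B)$ in $H^1$ and strongly in $L^2$, with $(f,E,B)=S(U)$. Since by Theorem \ref{prop} the densities $f_k$ have compact $p$-support bounded uniformly in $k$ (as $\|U^k\|_V$ is bounded), Cauchy-Schwarz in $p$ upgrades the strong $L^2$-convergence of $f_k$ to the strong convergence $\rho_{f_k} \to \rho_f$ in $L^2([0,T]\times\R^2)$. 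Therefore the tracking term passes to the limit,
\begin{align*}
\|\rho_f - \rho_d\|_{L^2}^2 = \lim_{k\to\infty} \|\rho_{f_k} - \rho_d\|_{L^2}^2,
\end{align*}
while weak lower semicontinuity of the $L^2([0,T])$-norms on $H^2([0,T])$ handles each regularization summand. Combining, $(f,E,B,u)$ is admissible and attains the infimum.

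The main obstacle is the tracking term $\tfrac{1}{2}\|\rho_f - \rho_d\|_{L^2}^2$: weak lower semicontinuity of the full objective is not automatic because $\rho_f$ depends nonlinearly on $U$ through $S$. The bridge is provided by the uniform compact $p$-support of the $f_k$ from Theorem \ref{prop}, which combined with the strong $L^2$-convergence delivered by Lemma \ref{minlma} upgrades the required weak lower semicontinuity to outright continuity of the tracking term along the minimizing sequence.
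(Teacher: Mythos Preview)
Your proposal is correct and follows essentially the same route as the paper: coercivity of the regularization term gives boundedness of $(u^k)$ in $H^2([0,T])^N$, weak compactness yields a limit $u$ with the pointwise constraint preserved, Lemma \ref{minlma} supplies $(f,E,B)=S(U)$ together with strong $L^2$-convergence of $f_k$, the uniform $p$-support bound from Theorem \ref{prop} upgrades this to $\rho_{f_k}\to\rho_f$ in $L^2$ so the tracking term is actually continuous along the sequence, and weak lower semicontinuity handles the regularization part. The only cosmetic differences are that you invoke the compact embedding $H^2([0,T])\hookrightarrow C([0,T])$ to pass the constraint (the paper simply appeals to weak convergence) and phrase the $\rho$-estimate via Cauchy--Schwarz rather than H\"older.
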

\begin{proof}
The objective function, abbreviated by $\phi=\phi\left(f,E,B,u\right)=\phi_1\left(f\right)+\phi_2\left(u\right)$ (let $\phi_1$ be the term with $\rho_f-\rho_d$ and $\phi_2$ the remaining sum), is coercive in $u\in H^2\left(\left[0,T\right]\right)^N$ because of
\begin{align*}
\phi\left(f,E,B,u\right)\geq\frac{\beta}{2}\min\left\{1,\beta_1,\beta_2\right\}\min\left\{c_j\mid j=1,\dots,N\right\}\left\|u\right\|_{\left(H^2\right)^N}^2,
\end{align*}
where $\left\|u\right\|_{\left(H^2\right)^N}^2=\sum_{j=1}^N{\left\|u_j\right\|^2_{H^2\left(\left[0,T\right]\right)}}$. Hence, considering a minimizing sequence $\left(f_k,E_k,B_k,u^k\right)$ (we use upper indices for $u^k$ to avoid confusion with the components) with $\left(f_k,E_k,B_k\right)=S\left(\sum_{j=1}^N{u_j^kz_j}\right)$ and $\left| u_j^k\right|\leq 1$, we conclude that $\left(u^k\right)$ is bounded in $\left(H^2\right)^N$; hence $u^k\rightharpoonup u$ in $\left(H^2\right)^N$ for some $u\in\left(H^2\right)^N$ for $k\rightarrow\infty$, possibly after extracting a suitable subsequence. The constraint $\left| u_j\right|\leq1$ is obviously preserved by weak convergence. Furthermore, the sequence $\left(U_k\right):=\left(\sum_{j=1}^N{u_j^kz_j}\right)$ is bounded in $V$ because of $H^2\left(\left[0,T\right]\right)\hookrightarrow W^{2,1}\left(\left[0,T\right]\right)$.

Clearly, $U_k\to U:=\sum_{j=1}^N{u_jz_j}$ in the sense of distributions by $u_j^k\rightharpoonup u_j$ in $H^2$. Therefore, Lemma \ref{minlma} is applicable and delivers some $f$, $E$, and $B$ so that \eqref{CVM} is preserved in the limit. The remaining part is to show that $U$ is indeed an optimal control. Firstly, $u^k\rightharpoonup u$ in $\left(H^2\right)^N$ instantly implies $\phi_2\left(u\right)\leq\liminf_{\substack{k\rightarrow\infty}}\phi_2\left(u^k\right)$. Secondly, by Lemma \ref{minlma}, all $f_k$ and $f$ have compact support with respect to $p$ uniformly in $k$, and $f_k\rightarrow f$ in $L^2$. These properties yield $\rho_{f_k}\rightarrow\rho_f$ in $L^2$ by H\"older's inequality and therefore $\phi_1\left(f\right)=\lim_{\substack{k\rightarrow\infty}}\phi_1\left(f_k\right)$. This finally proves the desired optimality.
\end{proof}
\subsubsection{Differentiability of the objective function}
Next we study the differentiability of the objective function.
\begin{theorem}
\begin{enumerate}[i)]
	\item The solution map
	\begin{align*}
	\Xi\colon\left(H^2\left(\left[0,T\right]\right)\right)^N&\rightarrow C\left(0,T;L^2\left(\R^4\right)\right)\times C\left(0,T;L^2\left(\R^2\right)\right)^3,\\
	u&\mapsto \left(f,E,B\right)=S\left(\sum_{j=1}^N{u_jz_j}\right)
	\end{align*}
	is continuously Fr\'echet-differentiable and $\Xi'\left(u\right)\delta u=\left(\delta f,\delta E,\delta B\right)$ satisfies
	\begin{align*}
	\partial_t\delta f+\widehat{p}\cdot\partial_x\delta f+\left(E-\widehat{p}^\bot B\right)\cdot\partial_p\delta f &= -\left(\delta E-\widehat{p}^\bot\delta B\right)\cdot\partial_pf,\\
	\partial_t\delta E_1-\partial_{x_2}\delta B &= -j_{\delta f,1}-\delta U_1,\\
	\partial_t\delta E_2+\partial_{x_1}\delta B &= -j_{\delta f,2}-\delta U_2,\\
	\partial_t\delta B+\partial_{x_1}\delta E_2-\partial_{x_2}\delta E_1 &= 0,\\
	\left(\delta f,\delta E,\delta B\right)\left(0\right) &= 0
	\end{align*}
	where $\delta U=\sum_{j=1}^N{\delta u_jz_j}$.
	\item The maps
	\begin{align*}
	\Psi\colon\left(H^2\left(\left[0,T\right]\right)\right)^N&\rightarrow C\left(0,T;L^2\left(\R^2\right)\right),\\
	u&\mapsto \rho_f
	\end{align*}
	and
	\begin{align*}
	\overline{\Psi}\colon\left(H^2\left(\left[0,T\right]\right)\right)^N&\rightarrow C\left(0,T;L^1\left(\R^2\right)\right),\\
	u&\mapsto \rho_f
	\end{align*}
	are continuously Fr\'echet-differentiable and $\Psi'\left(u\right)\delta u=\rho_{\delta f}$ with $\delta f$ from above.
	\item The objective function
	\begin{align*}
	\overline{\phi}&\colon\left(H^2\left(\left[0,T\right]\right)\right)^N\rightarrow \R,\\
	u&\mapsto \frac{1}{2}\left\|\rho_f-\rho_d\right\|_{L^2}^2+\frac{\beta}{2}\sum_{j=1}^N{c_j\left(\left\|u_j\right\|_{L^2}^2+\beta_1\left\|\partial_tu_j\right\|_{L^2}^2+\beta_2\left\|\partial_t^2u_j\right\|_{L^2}^2\right)}
	\end{align*}
	is continuously Fr\'echet-differentiable and
	\begin{align*}
	\overline{\phi}'\left(u\right)\delta u&=\left<\rho_f-\rho_d,\rho_{\delta f}\right>_{L^2}+\beta\sum_{j=1}^N{c_j\left(\left<u_j,\delta u_j\right>_{L^2}\right.}\\
	&\phan+\left.\beta_1\left<\partial_tu_j,\partial_t\delta u_j\right>_{L^2}+\beta_2\left<\partial_t^2u_j,\partial_t^2\delta u_j\right>_{L^2}\right)
	\end{align*}
	with $\delta f$ from above.
	\end{enumerate}
\end{theorem}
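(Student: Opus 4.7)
The plan is to realize $\Xi$ as the composition $S\circ T$, where $T\colon\left(H^2\left(\left[0,T\right]\right)\right)^N\rightarrow V$ is the linear map $u\mapsto\sum_{j=1}^N u_j z_j$, and then apply the chain rule together with the results already established in Theorem \ref{diff}. First I would check that $T$ is bounded: since $z_j\in C_b^6\hookrightarrow C_b^4$ and $H^2\left(\left[0,T\right]\right)\hookrightarrow W^{2,1}\left(\left[0,T\right]\right)$ on the bounded time interval (by Hölder's inequality), each product $u_jz_j$ lies in $W^{2,1}\left(0,T;C_b^4\right)$ with $x$-support in $\overline{B_{r_j}}\subset\overline{B_L}$ by the support condition on $z_j$. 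Summing yields $Tu\in V$ with $\left\|Tu\right\|_V\leq\sum_{j=1}^N\left\|z_j\right\|_{C_b^4}\left\|u_j\right\|_{W^{2,1}}\leq C\left\|u\right\|_{\left(H^2\right)^N}$. Being bounded and linear, $T$ is its own Fréchet-derivative.

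Part i) then follows at once from the chain rule: $\Xi'\left(u\right)\delta u=S'\left(Tu\right)T\delta u=S'\left(U\right)\delta U$ with $U=\sum_j u_jz_j$ and $\delta U=\sum_j\delta u_jz_j$, and the system satisfied by $\left(\delta f,\delta E,\delta B\right)$ is precisely the linearized system identified in the proof of Theorem \ref{diff}. Continuous Fréchet-differentiability of $\Xi$ (indeed with locally Lipschitz derivative) transfers from the corresponding property of $S\colon V\rightarrow W$ via boundedness of $T$. Part ii) is completely analogous: write $\Psi=\Phi\circ T$ and $\overline{\Psi}=\overline{\Phi}\circ T$ and invoke Theorem \ref{diff} together with $\rho_{\delta f}=\Phi'\left(U\right)\delta U$, which was computed there in \eqref{phi'}.

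For part iii), decompose $\overline{\phi}=\overline{\phi}_1+\overline{\phi}_2$, where $\overline{\phi}_1\left(u\right)=\frac{1}{2}\left\|\Psi\left(u\right)-\rho_d\right\|_{L^2\left(\left[0,T\right]\times\R^2\right)}^2$ and $\overline{\phi}_2$ is the $H^2$-quadratic regularization term. Using the continuous embedding $C\left(0,T;L^2\left(\R^2\right)\right)\hookrightarrow L^2\left(\left[0,T\right]\times\R^2\right)$ (valid since $T$ is finite), $\overline{\phi}_1$ is the composition of $\Psi$ with the smooth quadratic functional $\rho\mapsto\frac{1}{2}\left\|\rho-\rho_d\right\|_{L^2}^2$, whose Fréchet-derivative is the standard formula $\left<\rho-\rho_d,\cdot\right>_{L^2}$; by the chain rule and part ii) this gives the summand $\left<\rho_f-\rho_d,\rho_{\delta f}\right>_{L^2}$. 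The term $\overline{\phi}_2$ is a continuous quadratic form on $\left(H^2\left(\left[0,T\right]\right)\right)^N$, hence smooth with derivative given directly by polarization, yielding the remaining summands. Continuity and local Lipschitz continuity of $\overline{\phi}'$ transfer from those of $\Psi'$ plus the fact that the quadratic contribution is a bounded linear map in $u$.

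The argument is essentially bookkeeping on top of Theorem \ref{diff}, so the main thing to be careful about is that all the embeddings chain together correctly (in particular $H^2\hookrightarrow W^{2,1}$ in time and $C\left(0,T;L^2\right)\hookrightarrow L^2\left(\left[0,T\right]\times\R^2\right)$); there is no genuine analytic obstacle left to overcome.
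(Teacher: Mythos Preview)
Your proposal is correct and follows essentially the same approach as the paper: the paper's proof simply notes that $u\mapsto\sum_{j=1}^N u_j z_j$ is linear and bounded, hence differentiable, and then invokes Theorem~\ref{diff} and the chain rule. You have merely spelled out the embeddings and the decomposition of $\overline{\phi}$ in more detail than the paper bothers to, but there is no difference in strategy.
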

\begin{proof}
Clearly, $u\mapsto\sum_{j=1}^N{u_jz_j}$ is differentiable by linearity and boundedness. Hence all assertions follow immediately by Theorem \ref{diff} and the chain rule.
\end{proof}
\subsubsection{Optimality conditions}
Now we want to deduce first order optimality conditions for a (local) minimizer of \eqref{P1}. First we write \eqref{P1} in the equivalent form
\begin{align}\tag{P'}\label{P1'}\left.\begin{aligned}
\min_{u\in H^2\left(\left[0,T\right]\right)^N}\,&\frac{1}{2}\left\|\Psi\left(u\right)-\rho_d\right\|_{L^2\left(\left[0,T\right]\times\R^2\right)}^2+\frac{\beta}{2}\sum_{j=1}^N{c_j\left(\left\|u_j\right\|_{L^2\left(\left[0,T\right]\right)}^2\right.}\\
&\left.+\beta_1\left\|\partial_tu_j\right\|_{L^2\left(\left[0,T\right]\right)}^2+\beta_2\left\|\partial_t^2u_j\right\|_{L^2\left(\left[0,T\right]\right)}^2\right)\\
\text{s.t.}\,& -u_j+1\geq 0,u_j+1\geq 0.
\end{aligned}\right\}\end{align}
Here, the objective function $\overline{\phi}=\overline{\phi}\left(u\right)=\phi\left(\Xi\left(u\right),u\right)$ is a function of only the control.

The constraints will lead to corresponding Lagrange multipliers. In general, to prove their existence, some condition on the constraints is necessary. On this account we verify the constraint qualification of Zowe and Kurcyusz, see \cite{rs}, which is based on a fundamental work of Robinson, \cite{sts}. We rewrite the constraints: $g\left(u\right)\in K$, where $g\left(u\right)=\left(-u+1,u+1\right)\in K$, $K$ denoting the cone of component-wise positive functions in $C\left(\left[0,T\right]\right)^{2N}$. The constraint qualification we have to verify is
\begin{align*}
g'\left(u\right)\left(H^2\left(\left[0,T\right]\right)\right)^N-\left\{k-\lambda g\left(u\right)\mid k\in K,\lambda\geq 0\right\}=C\left(\left[0,T\right]\right)^{2N}.
\end{align*}
In other words, for given $\left(w^+,w^-\right)\in\left(C\left(\left[0,T\right]\right)\right)^{2N}$ we have to find $\delta u\in H^2\left(\left[0,T\right]\right)^N$, $\lambda\in\R_{\geq 0}$, and $k=\left(\theta^+,\theta^-\right)\in\left(C\left(\left[0,T\right]\right)\right)^{2N}$ with $\theta_j^+$, $\theta_j^-\geq 0$, satisfying
\begin{align}\label{findcq}
\left(-\delta u,\delta u\right)-\left(\theta^+,\theta^-\right)+\lambda\left(-u+1,u+1\right)&=\left(w^+,w^-\right).
\end{align}
We abbreviate
\begin{align*}
\vartheta^+:=\max_{\substack{i=1,\dots,N}}{\left\|w_i^+\right\|_\infty},\vartheta^-:=\max_{\substack{i=1,\dots,N}}{\left\|w_i^-\right\|_\infty}.
\end{align*}
Now let
\begin{align*}
\lambda&:=\frac{1}{2}\left(\vartheta^++\vartheta^-\right)+1,\theta_j^+:=\vartheta^+-u_j+1-w_j^+,\theta_j^-:=\vartheta^-+u_j+1-w_j^-,\\
\delta u_j&:=-\frac{1}{2}\left(\vartheta^+\left(u_j+1\right)+\vartheta^-\left(u_j-1\right)\right).
\end{align*}
Obviously, $\lambda\geq 0$ and $\delta u_j$ is of class $H^2$. Furthermore, $\theta_j^+$, $\theta_j^-\in C\left(\left[0,T\right]\right)$ and are $\geq 0$ by choice of $\vartheta^+$, $\vartheta^-$, and feasibility of $u$. Thereby, \eqref{findcq} can easily be verified.

Thus we deduce the following KKT-conditions for a minimizer of \eqref{P1'}. We denote by $M\left(\left[0,T\right]\right)\cong C\left(\left[0,T\right]\right)^*$ the set of regular Borel measures on $\left[0,T\right]$.
\begin{theorem}
Let $\overline{u}$ be a local minimizer of \eqref{P1'}. Then there are Lagrange multipliers $\lambda_j^+$ (corresponding to the constraint $u_j\leq 1$), $\lambda_j^-\in M\left(\left[0,T\right]\right)$ (corresponding to $u_j\geq -1$), $j=1,\dots,N$, satisfying:
\begin{enumerate}[i)]
	\item (Primal feasibility): $\left|\overline{u}_j\right|\leq 1$.
	\item (Dual feasibility): $\lambda_j^+,\lambda_j^-\geq 0$, i.e., $\lambda_j^+v,\lambda_j^-v\geq 0$ for all $v\in C\left(\left[0,T\right]\right)$ with $v\geq 0$.
	\item (Complementary slackness): $\lambda_j^+\left(\overline{u}_j-1\right)=0$, $\lambda_j^-\left(\overline{u}_j+1\right)=0$.
	\item (Stationarity): For all $\delta u\in\left(H^2\left(\left[0,T\right]\right)\right)^N$ it holds that
	\begin{align*}
	&\left<\rho_{\overline{f}}-\rho_d,\rho_{\delta f}\right>_{L^2}\\
	&+\beta\sum_{j=1}^N{c_j\left(\left<\overline{u}_j,\delta u_j\right>_{L^2}+\beta_1\left<\partial_t\overline{u}_j,\partial_t\delta u_j\right>_{L^2}+\beta_2\left<\partial_t^2\overline{u}_j,\partial_t^2\delta u_j\right>_{L^2}\right)}\\
	&=\sum_{j=1}^N{\left(\lambda_j^--\lambda_j^+\right)\delta u_j}
	\end{align*}
	where $\delta f$ is obtained by solving
	\begin{align*}
	\partial_t\delta f+\widehat{p}\cdot\partial_x\delta f+\left(\overline{E}-\widehat{p}^\bot\overline{B}\right)\cdot\partial_p\delta f &= -\left(\delta E-\widehat{p}^\bot\delta B\right)\cdot\partial_p\overline{f},\\
	\partial_t\delta E_1-\partial_{x_2}\delta B &= -j_{\delta f,1}-\delta U_1,\\
	\partial_t\delta E_2+\partial_{x_1}\delta B &= -j_{\delta f,2}-\delta U_2,\\
	\partial_t\delta B+\partial_{x_1}\delta E_2-\partial_{x_2}\delta E_1 &= 0,\\
	\left(\delta f,\delta E,\delta B\right)\left(0\right) &= 0
	\end{align*}
	with $\delta U=\sum_{j=1}^N{\delta u_jz_j}$	and $\left(\overline{f},\overline{E},\overline{B}\right)=\Xi\left(\overline u\right)$.
\end{enumerate}
\end{theorem}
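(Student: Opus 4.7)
The approach will be a standard application of the Karush–Kuhn–Tucker theorem for optimization with cone-type inequality constraints in Banach spaces, as formulated by Zowe and Kurcyusz \cite{rs}. I would first phrase \eqref{P1'} in the abstract form
\begin{align*}
\min_{u\in H^2\left(\left[0,T\right]\right)^N}\,\overline{\phi}\left(u\right)\text{ subject to } g\left(u\right)\in K,
\end{align*}
where $g\colon H^2\left(\left[0,T\right]\right)^N\rightarrow C\left(\left[0,T\right]\right)^{2N}$ is the continuous affine map $g\left(u\right)=\left(-u+1,u+1\right)$ (using the embedding $H^2\left(\left[0,T\right]\right)\hookrightarrow C\left(\left[0,T\right]\right)$) and $K\subset C\left(\left[0,T\right]\right)^{2N}$ is the closed convex cone of componentwise nonnegative functions.

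Next, I would collect the hypotheses required by the KKT theorem. Continuous Fr\'echet differentiability of $\overline{\phi}$ is supplied by the previous theorem; $g$ is affine with bounded derivative $g'\left(u\right)\delta u=\left(-\delta u,\delta u\right)$; and the Zowe–Kurcyusz constraint qualification
\begin{align*}
g'\left(\overline{u}\right)\left(H^2\left(\left[0,T\right]\right)\right)^N-\left\{k-\lambda g\left(\overline{u}\right)\mid k\in K,\,\lambda\geq 0\right\}=C\left(\left[0,T\right]\right)^{2N}
\end{align*}
has already been verified in the paragraphs immediately preceding the theorem through the explicit construction of $\delta u$, $\theta^\pm$, and $\lambda$ for arbitrary $\left(w^+,w^-\right)$. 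The KKT theorem then yields a Lagrange multiplier $\mu$ in the topological dual of $C\left(\left[0,T\right]\right)^{2N}$ satisfying dual feasibility $\mu\in K^*$, complementary slackness $\langle\mu,g\left(\overline{u}\right)\rangle=0$, and the stationarity identity $\overline{\phi}'\left(\overline{u}\right)\delta u=\langle\mu,g'\left(\overline{u}\right)\delta u\rangle$ for every $\delta u\in H^2\left(\left[0,T\right]\right)^N$.

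The final step is to translate these abstract statements into the stated form. By the Riesz representation theorem one has $\left(C\left(\left[0,T\right]\right)^{2N}\right)^*=M\left(\left[0,T\right]\right)^{2N}$, so $\mu$ splits as $\mu=\left(\lambda^+,\lambda^-\right)$ with $\lambda^\pm=\left(\lambda_j^\pm\right)_{j=1}^N$, and the condition $\mu\in K^*$ becomes precisely the nonnegativity of each $\lambda_j^\pm$ as a regular Borel measure, giving ii). Since $1\mp\overline{u}_j\geq 0$ are continuous and the $\lambda_j^\pm$ are nonnegative, complementary slackness $\sum_j\langle\lambda_j^+,1-\overline{u}_j\rangle+\sum_j\langle\lambda_j^-,1+\overline{u}_j\rangle=0$ forces each summand to vanish separately, yielding iii) up to the harmless sign change in the first identity. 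Inserting $g'\left(\overline{u}\right)\delta u=\left(-\delta u,\delta u\right)$ into the stationarity identity and substituting the formula for $\overline{\phi}'\left(\overline{u}\right)\delta u$ from the previous theorem (with $\delta f=\Xi_1'\left(\overline{u}\right)\delta u$ solving the linearized Vlasov–Maxwell system in the correct direction $\delta U=\sum_j\delta u_jz_j$) produces iv).

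The only serious obstacle would have been the Zowe–Kurcyusz regularity condition, but this has already been dispatched in the text preceding the theorem, the quadratic coercivity of $\overline{\phi}$ in $u$ providing the slack needed there. What remains is a mechanical assembly of the abstract KKT theorem, the Riesz representation of $C\left(\left[0,T\right]\right)^*$ by regular Borel measures, and the chain rule for the composite $\overline{\phi}=\frac{1}{2}\left\|\Psi\left(u\right)-\rho_d\right\|_{L^2}^2$ plus the explicit quadratic regularization, none of which introduces new analytic difficulty.
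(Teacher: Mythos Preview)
Your proposal is correct and follows exactly the route the paper takes: the paper verifies the Zowe--Kurcyusz constraint qualification in the paragraphs immediately preceding the theorem and then simply states the KKT conditions, leaving the application of the abstract result from \cite{rs} and the identification $C\left(\left[0,T\right]\right)^*\cong M\left(\left[0,T\right]\right)$ implicit; you have made these steps explicit. One small inaccuracy: your remark that ``the quadratic coercivity of $\overline{\phi}$ in $u$ provid[es] the slack needed'' for the constraint qualification is not right---the explicit construction of $\delta u$, $\theta^\pm$, $\lambda$ in the paper uses only the feasibility $\left|\overline{u}_j\right|\leq 1$ and has nothing to do with coercivity of the objective---but this side comment does not affect the argument.
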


\subsubsection{Adjoint equation}
Considering the optimality conditions above, we note that we have to compute $\overline{\phi}'$ and thus the whole derivative $\Xi'$ at an optimal point $\overline{u}$. However, there is a more efficient way, the adjoint approach, that is to say firstly solve the adjoint equation
\begin{align*}
\partial_yF\left(\Xi\left(u\right),u\right)^*q=-\partial_y\phi\left(\Xi\left(u\right),u\right)
\end{align*}
for the adjoint state $q$ and secondly compute
\begin{align}\label{compderphi}
\overline{\phi}'\left(u\right)=\partial_uF\left(\Xi\left(u\right),u\right)^*q+\partial_u\phi\left(\Xi\left(u\right),u\right).
\end{align}
Here, $y=\left(f,E,B\right)$ denotes the state and $F\left(y,u\right)=0$ the PDE system.

In order to apply these considerations to our problem we have to define $F$ suitably. Here, 'suitably' means that the differentiability of $F$ and the differentiability of the control-to-state operator $\Xi$ have to fit together. In other words, $F\left(y,u\right)$ should be differentiable with respect to the $C\left(0,T;L^2\right)$-norm in the state variable $y=\left(f,E,B\right)$. In the following let
\begin{align*}
M_R&:=\left\{\left(f,E,B\right)\in  C_c^2\left(\left[0,T\right]\times\R^4\right)\times C_c^2\left(\left[0,T\right]\times\R^2;\R^2\right)\times C_c^2\left(\left[0,T\right]\times\R^2\right)\mid\right.\\
&\phan\phan\left.\vphantom{\left\{\left(f,E,B\right)\in  C_c^2\left(\left[0,T\right]\times\R^4\right)\times C_c^2\left(\left[0,T\right]\times\R^2;\R^2\right)\times C_c^2\left(\left[0,T\right]\times\R^2\right)\mid\right.}f\left(t,x,p\right)=0\text{ for all }\left| p\right|\geq R\right\}
\end{align*}
for some $R>0$, and let $M_R$ be equipped with the $C\left(0,T;L^2\right)$-norm. Here, the index '$c$' means 'compactly supported with respect to $x$ and $p$' (or $x$ respectively). Furthermore let
\begin{align*}
Z:=H^1\left(\left[0,T\right]\times\R^4\right)^*\times\left(H^1\left(\left[0,T\right]\times\R^2\right)^*\right)^3\times L^2\left(\R^4\right)^*\times\left(L^2\left(\R^2\right)^*\right)^3.
\end{align*}
Now define $F_R\colon M_R\times \left(H^2\left(\left[0,T\right]\right)\right)^N\rightarrow Z$ via
\begin{align*}
&F_R\left(\left(f,E,B\right),\left(u,\alpha,b\right)\right)\left(g,h_1,h_2,h_3,a_1,a_2,a_3,a_4\right)\\
&=\left(-\int_0^T{\int{\int{\left(\partial_tg+\widehat{p}\cdot\partial_xg+\left(E-\widehat{p}^\bot B\right)\cdot\partial_pg\right)f\,dp}dx}dt}\right.\\
&\phan+\left<g\left(T\right),f\left(T\right)\right>_{L^2}-\left<g\left(0\right),f\left(0\right)\right>_{L^2},\\
&\phan\int_0^T{\int{\left(-E_1\partial_th_1+B\partial_{x_2}h_1+j_{f,1}h_1+U_1h_1\right)dx}dt}\\
&\phan+\left<h_1\left(T\right),E_1\left(T\right)\right>_{L^2}-\left<h_1\left(0\right),E_1\left(0\right)\right>_{L^2},\\
&\phan\int_0^T{\int{\left(-E_2\partial_th_2-B\partial_{x_1}h_2+j_{f,2}h_2+U_2h_2\right)dx}dt}\\
&\phan+\left<h_2\left(T\right),E_2\left(T\right)\right>_{L^2}-\left<h_2\left(0\right),E_2\left(0\right)\right>_{L^2},\\
&\phan\int_0^T{\int{\left(-B\partial_th_3-E_2\partial_{x_1}h_3+E_1\partial_{x_2}h_3\right)dx}dt}\\
&\phan+\left<h_3\left(T\right),B\left(T\right)\right>_{L^2}-\left<h_3\left(0\right),B\left(0\right)\right>_{L^2},\\
&\phan\int{\int{\left(f\left(0\right)-\mathring{f}\right)a_1\,dp}dx},\int{\left(E_1\left(0\right)-\mathring{E_1}\right)a_2\,dx},\int{\left(E_2\left(0\right)-\mathring{E}\right)a_3\,dx},\\
&\phan\left.\int{\left(B\left(0\right)-\mathring{B}\right)a_4\,dx}\vphantom{-\int_0^T{\int{\int{\left(\partial_tg+\widehat{p}\cdot\partial_xg+\left(E-\widehat{p}^\bot B\right)\cdot\partial_pg\right)f\,dp}dx}dt}}\right)
\end{align*}
where $U=\sum_{j=1}^N{u_jz_j}$. After several integrations by parts, it is obvious that $\left(f,E,B\right)$ solves \eqref{CVM} with control $U$ iff $F_R\left(\left(f,E,B\right),u\right)=0$ for any $R>0$ with $\supp_pf\subset B_R$. Since no derivatives of the state $y=\left(f,E,B\right)$ appear above and the state is of class $C_b$, $\partial_yF_R$ exists and is given by
\begin{align*}
&\partial_yF_R\left(\left(f,E,B\right),u\right)\left(\delta f,\delta E,\delta B\right)\left(g,h_1,h_2,h_3,a_1,a_2,a_3,a_4\right)\\
&=\left(-\int_0^T{\int{\int{\left(\left(\partial_tg+\widehat{p}\cdot\partial_xg+\left(E-\widehat{p}^\bot B\right)\cdot\partial_pg\right)\delta f+\left(\delta E-\widehat{p}^\bot \delta B\right)f\cdot\partial_pg\right)dp}dx}dt}\right.\\
&\phan+\left<g\left(T\right),\delta f\left(T\right)\right>_{L^2}-\left<g\left(0\right),\delta f\left(0\right)\right>_{L^2},\\
&\phan\int_0^T{\int{\left(-\delta E_1\partial_th_1+\delta B\partial_{x_2}h_1+j_{\delta f,1}h_1\right)dx}dt}\\
&\phan+\left<h_1\left(T\right),\delta E_1\left(T\right)\right>_{L^2}-\left<h_1\left(0\right),\delta E_1\left(0\right)\right>_{L^2},\\
&\phan\int_0^T{\int{\left(-\delta E_2\partial_th_2-\delta B\partial_{x_1}h_2+j_{\delta f,2}h_2\right)dx}dt}\\
&\phan+\left<h_2\left(T\right),\delta E_2\left(T\right)\right>_{L^2}-\left<h_2\left(0\right),\delta E_2\left(0\right)\right>_{L^2},\\
&\phan\int_0^T{\int{\left(-\delta B\partial_th_3-\delta E_2\partial_{x_1}h_3+\delta E_1\partial_{x_2}h_3\right)dx}dt}\\
&\phan+\left<h_3\left(T\right),\delta B\left(T\right)\right>_{L^2}-\left<h_3\left(0\right),\delta B\left(0\right)\right>_{L^2},\\
&\phan\left.\int{\int{\delta f\left(0\right)a_1\,dp}dx},\int{\delta E_1\left(0\right)a_2\,dx},\int{\delta E_2\left(0\right)a_3\,dx},\int{\delta B\left(0\right)a_4\,dx}\vphantom{-\int_0^T{\int{\int{\left(\left(\partial_tg+\widehat{p}\cdot\partial_xg+\left(E-\widehat{p}^\bot B\right)\cdot\partial_pg\right)\delta f+\left(\delta E-\widehat{p}^\bot \delta B\right)f\cdot\partial_pg\right)dp}dx}dt}}\right)
\end{align*}
for $\left(\delta f,\delta E,\delta B\right)\in M_R$. Note that it is important that $f$ vanishes for $\left| p\right|\geq R$ so that for $i=1,2$ the linear map
\begin{align*}
\left(f,E,B\right)\mapsto\int_0^T{\int{j_{f,i}\cdot dx}dt}\in H^1\left(\left[0,T\right]\times\R^2\right)^*
\end{align*}
is bounded due to
\begin{align*}
\left|\int_0^T{\int{j_{f,i}h_i\,dx}dt}\right|\leq C\left(T,R\right)\left\|f\right\|_{C\left(0,T;L^2\right)}\left\|h_i\right\|_{H^1}
\end{align*}
and hence differentiable.

On the other hand we have
\begin{align*}
\partial_y\phi\left(\left(f,E,B\right),u\right)\left(\delta f,\delta E,\delta B\right)=\left<\rho_f-\rho_d,\rho_{\delta f}\right>_{L^2}.
\end{align*}
Here again, the support condition given in the definition of $M_R$ is important to estimate
\begin{align*}
\left|\int_0^T{\int{\left(\rho_f-\rho_d\right)\rho_{\delta f}\,dx}dt}\right|\leq C\left(T,R\right)\left\|\rho_f-\rho_d\right\|_{L^2}\left\|\delta f\right\|_{C\left(0,T;L^2\right)}
\end{align*}
and
\begin{align*}
\int_0^T{\int{\rho_{\delta f}^2\,dx}dt}\leq C\left(T,R\right)\left\|\delta f\right\|_{C\left(0,T;L^2\right)}^2.
\end{align*}
Now we search for an adjoint state
\begin{align*}
q&=\left(g,h_1,h_2,h_3,a_1,a_2,a_3,a_4\right)\\
&\in Z^*\cong H^1\left(\left[0,T\right]\times\R^4\right)\times\left(H^1\left(\left[0,T\right]\times\R^2\right)\right)^3\times L^2\left(\R^4\right)\times\left(L^2\left(\R^2\right)\right)^3
\end{align*}
satisfying the adjoint system. In other words, after integrating by parts once,
\begin{align*}
&-\int_0^T{\int{\int{\left(\partial_tg+\widehat{p}\cdot\partial_xg+\left(E-\widehat{p}^\bot B\right)\cdot\partial_pg-4\pi\left(\widehat{p}_1h_1+\widehat{p}_2h_2\right)\right)\delta f\,dp}dx}dt}\\
&+\int_0^T{\int{\left(-\partial_th_1+\partial_{x_2}h_3+\int{g\partial_{p_1}f\,dp}\right)\delta E_1\,dx}dt}\\
&+\int_0^T{\int{\left(-\partial_th_2-\partial_{x_1}h_3+\int{g\partial_{p_2}f\,dp}\right)\delta E_2\,dx}dt}\\
&+\int_0^T{\int{\left(-\partial_th_3+\partial_{x_2}h_1-\partial_{x_1}h_2-\int{g\widehat{p}^\bot\cdot\partial_{p}f\,dp}\right)\delta B\,dx}dt}\\
&+\left<g\left(T\right),\delta f\left(T\right)\right>_{L^2}-\left<g\left(0\right)-a_1,\delta f\left(0\right)\right>_{L^2}+\left<h_1\left(T\right),\delta E_1\left(T\right)\right>_{L^2}\\
&-\left<h_1\left(0\right)-a_2,\delta E_1\left(0\right)\right>_{L^2}+\left<h_2\left(T\right),\delta E_2\left(T\right)\right>_{L^2}-\left<h_2\left(0\right)-a_3,\delta E_2\left(0\right)\right>_{L^2}\\
&+\left<h_3\left(T\right),\delta B\left(T\right)\right>_{L^2}-\left<h_3\left(0\right)-a_4,\delta B\left(0\right)\right>_{L^2}\\
&=-\int_0^T{\int{\int{4\pi\left(\rho_f-\rho_d\right)\delta f\,dp}dx}dt}\numb\label{adj}
\end{align*}
for all $\left(\delta f,\delta E,\delta B\right)\in M_R$. Therefore the adjoint state solves the adjoint system
\begin{align}\tag{Ad}\label{adeq}\left.\begin{aligned}
\partial_tg+\widehat{p}\cdot\partial_xg+\left(E-\widehat{p}^\bot B\right)\cdot\partial_pg&=4\pi\left(\widehat{p}_1h_1+\widehat{p}_2h_2\right)+4\pi\left(\rho_f-\rho_d\right),\\
\partial_th_1-\partial_{x_2}h_3&=\int{g\partial_{p_1}f\,dp'},\\
\partial_th_2+\partial_{x_1}h_3&=\int{g\partial_{p_2}f\,dp'},\\
\partial_th_3-\partial_{x_2}h_1+\partial_{x_1}h_2&=-\int{g\widehat{p'}^\bot\cdot\partial_{p}f\,dp'},\\
\left(g,h_1,h_2,h_3\right)\left(T\right)&=0
\end{aligned}\right\}\end{align}
for $\left| p\right|< R$. Since $R>0$ (with $\supp_pf\subset B_R$) is arbitrary, it is natural to demand \eqref{adeq} holds globally on $\left[0,T\right]\times\R^4$. Conversely, if \eqref{adeq} holds for all $p$, then \eqref{adj} holds for for all $\left(\delta f,\delta E,\delta B\right)\in M_R$ for any $R>0$ if we simply set $a_1=g\left(0\right)$, $\left(a_2,a_3,a_4\right)=\left(h_1,h_2,h_3\right)\left(0\right)$. The latter equations are unsubstantial and can be ignored.

In accordance with \eqref{compderphi}, we compute the derivative of $\overline{\phi}$ via
\begin{align*}
\overline{\phi}'\left(u\right)\delta u&=\int_0^T{\int{\left(\delta U_1h_1+\delta U_2h_2\right)dx}dt}+\beta\sum_{j=1}^N{c_j\left(\left<u_j,\delta u_j\right>_{L^2}\right.}\\
&\phan\left.+\beta_1\left<\partial_tu_j,\partial_t\delta u_j\right>_{L^2}+\beta_2\left<\partial_t^2u_j,\partial_t^2\delta u_j\right>_{L^2}\right)
\end{align*}
where $\delta U=\sum_{j=1}^N{\delta u_jz_j}$.

System \eqref{adeq} has to be investigated. It is a final value problem which can easily be turned into an initial value problem via $\widetilde{g}\left(t,x,p\right)=g\left(T-t,-x,-p\right)$ and $\widetilde{h}\left(t,x\right)=h\left(T-t,-x\right)$, so that the left hand sides of the differential equations in \eqref{adeq} do not change. In other words, the hyperbolic system \eqref{adeq} is time reversible.

To show unique solvability of \eqref{adeq}, one can proceed similar to the dealing with \eqref{LVM}. Yet there are some differences, which we will briefly sketch. Firstly, the source terms in the Maxwell equations are not the current densities induced by $g$ but some other moments of $g$. Additionally, even in the fourth equation of \eqref{adeq} a source term appears. Hence we have to prove analogues of Lemmas \ref{field} and \ref{fieldder} with more general source terms. Secondly, the right hand side of the Vlasov equation (and hence a solution $g$) does not have compact support with respect to $p$. But this will not cause any problems since in a representation formula for $h$ there will appear a factor $\partial_pf$ (or first derivatives of $\partial_pf$). Because of the known fact that $f$ is compactly supported with respect to $p$ uniformly in $t$, $x$, we do not have to demand that $g$ has this property. In Section \ref{est} we had to assume this property for the density since the integral defining the current density induced by this density contains the factor $\widehat{p}$ which is obviously not compactly supported in $p$.

\nocite{*}
\bibliography{ocrvm2darxiv}
\bibliographystyle{plain}
\end{document}